\numberwithin{equation}{section}
\theoremstyle{plain} 
\newtheorem{theorem}{Theorem}[section]
\newtheorem*{theorem*}{Theorem}
\newtheorem{proposition}[theorem]{Proposition}
\newtheorem*{proposition*}{Proposition}
\newtheorem{corollary}[theorem]{Corollary}
\newtheorem*{corollary*}{Corollary}
\newtheorem{lemma}[theorem]{Lemma}
\newtheorem*{lemma*}{Lemma}
\newtheorem*{conjecture*}{Conjecture}
\theoremstyle{definition} 
\newtheorem{definition}[theorem]{Definition}
\newtheorem*{definition*}{Definition}
\theoremstyle{remark} 
\newtheorem{remark}[theorem]{Remark}
\newtheorem*{remark*}{Remark}
\newtheorem*{example*}{Example}
\newcommand{\diff}{\mathrm{d}}
\title[]{Smoothness of martingale observables and generalized Feynman--Kac formulas}
\author[Karrila]{Alex Karrila}
\address{\AA bo Akademi Matematik; Henriksgatan 2, 20500 \AA bo, Finland}
\email{alex.karrila@abo.fi}
\author[Viitasaari]{Lauri Viitasaari}
\address{Aalto University School of Business, Department of Information and Service Management, PO Box 11110, 00076 Aalto, Finland}
\email{lauri.viitasaari@aalto.fi}
\begin{document}

	\keywords{Degenerate diffusions, Feynman-Kac formula, martingale observables, Schramm--Loewner evolutions}
	\subjclass[2020]{60H10, 60H30, 60J67, 60G44}
	\thanks{}
	\date{\today}
	\begin{abstract}
		%
		We prove that, under the H\"ormander criterion on an It\^{o} process, all its martingale observables are smooth. As a consequence, we also obtain a generalized Feynman--Kac formula providing smooth solutions to certain PDE boundary-value problems, while allowing for degenerate diffusions as well as boundary stopping (under very mild boundary regularity assumptions). We also highlight an application to a question posed on Schramm--Loewner evolutions, by making certain Girsanov transform martingales accessible via It\^{o} calculus.
	\end{abstract}

	\maketitle
	
	{\small
		\tableofcontents
	}

	\section{Introduction}
	The celebrated Feynman--Kac formulas represent solutions to parabolic second order partial differential equations (PDEs) as conditional expectations of functionals of It\^o processes, sparking advances in both probability and analysis.
	Classical applications include the derivation of the Black--Scholes model for option pricing~\cite{Karatzas-Shreve98} and Monte Carlo methods for numerically solving PDEs~\cite{MC-book, diffusion-MC}.
	While classical and stochastic analysis are often parallel, the latter is generally less evasive of \textit{degenerate diffusions}, for which the second-order part of the parabolic PDE --- in the notation of Section~\ref{sec:main}, the positive semi-definite, symmetric matrix $a(x)$ --- is not uniformly elliptic, i.e., its smallest eigenvalue attains the value zero or tends to zero on the domain boundary. The former type naturally arises, e.g., in particle kinetics~(\cite[Section~2.4]{Villaini}; see also~\cite{Doren}) and Conformal Field Theory and SLEs~(Section~\ref{subsec:SLE}), and the latter in, e.g., finance,~\cite{Heston, Karatzas} and mathematical biology~\cite{Pop1}. Especially in the degenerate case, generalized Feynman--Kac formulas also turn into a theoretical tool, as the comparatively complete theory of uniformly elliptic PDEs is not at hand, cf.~\cite{Pop2, WZ22}.
	
	This article studies the relation of parabolic PDEs, Feynman--Kac formulas and \textit{martingale observables} (introduced below), under the general H\"ormander criterion on the underlying It\^{o} process $X_t$. Emphasis is put on smoothness questions, as they play a crucial r\^{o}le in any further It\^{o} calculus computations (cf. Section~\ref{subsec:SLE}). We make no assumption towards ellipticity or uniform ellipticity of the operators (resp. diffusions), remaining thus in the domain where Feynman--Kac type formulas are particularly valuable and the existing theory is least extensive.

	To explain intuitions and caveats around these relations,
	in the simplest special case (cf. Definition~\ref{def:mgale obs}), a martingale observable refers to a function $f: \Lambda \times [0, T) \to \mathbb{R}$ in space $\Lambda \subset \mathbb{R}^n$ and time $t \in [0, T)$ such that, for a given It\^o diffusion $X_t$ on $\Lambda$, $f(X_t,t)$ is a local martingale.
	On the one hand, restricting to the class of twice continuously differentiable functions $f$, one can prove by direct It\^{o} calculus that $f$ is such a martingale observable if and only if it satisfies the PDE addressed in the Feynman--Kac formula. However, there is no reason \textit{a priori} that a martingale observable, or a generalized PDE solution (without ellipticity), should be twice continuously-differentiable. On the other hand, if $f$ is given by a Feynman--Kac type expectation $f(x, t )= \mathbb{E}[\psi(X_\tau, \tau) \; |\; X_t = x]$ of some bounded boundary values $\psi$ at the hitting time $\tau$ of $\partial ( \Lambda  \times [0, T) )$, then $f(X_t, t)$ is clearly a martingale. However, not every martingale observable has such an expression (the observable may not be a genuine martingale, 
	$X_\tau$ may not be defined, etc.), and even if it has, in $n > 1$ spatial dimensions, such an expectation is not necessarily a twice-differentiable function (nor a strong PDE solution).

	As our first main result, Theorem \ref{thm:mgale observables solve PDEs}, we prove that martingale observables are automatically weak solutions to the related PDEs. By basic PDE regularity theory, one then can deduce smoothness of the martingale observables (and hence equivalence to PDEs) provided that the order zero and driving term of the related PDE are smooth.\footnote{Smoothness of higher-order terms is already required by Hörmander's criterion.}
	We emphasize that, on top of not requiring ellipticity, we neither require the coefficient functions of the PDE to be bounded, typically leading to a finite life-time of the underlying It\^{o} process. In particular, this is the case in our two main applications concerning SLE curves, see Section \ref{subsec:SLE}. Roughly speaking, the SLE interpretations of our result are (i) that certain SLE convergence results for lattice models can be fairly simply extended to variant lattice models (cf.~\cite{FW}), and (ii) a piece in the construction of perturbed SLE models, bridging between martingale observables and an explicit expression relying on It\^{o} calculus of smooth functions~(cf. \cite{PW, HPW}). In both cases, the martingale observables are Girsanov's measure-transforming martingales between different SLE type models. Our result will also be used in an ongoing work~\cite{ongoing} for a conclusion of type~(ii); in this paper we exemplify such use with multiple SLEs, for which an alternative proof is already known~\cite{Dubedat, AHPY, FLPW}.

	
	Largely following from the first main result, Theorem \ref{thm:Feynman--Kac} provides a generalized Feynman--Kac formula, giving probabilistic representations for solutions to boundary value problems related to parabolic second order PDEs. Our result merely assumes H\"ormander's criterion and some very mild boundary regularity of $\Lambda$ (again, no (uniform) ellipticity), hence widening the knowledge related to Feynman-Kac formulas, to the best of our knowledge. As with the martingale observables, smooth strong solutions are guaranteed for smooth coefficient functions. As a special case, in Theorem \ref{thm:X-harmonic FK} we re-prove the solution of so-called $X$-harmonic problems, studied, e.g., in~\cite{Bony, Stroock-Var, Oksendal-SDE}. 
	
	Our proof is based on combining tools from PDE theory and stochastic calculus. The core new idea towards Theorem~\ref{thm:mgale observables solve PDEs} is to introduce a slowed-down process $\hat{X}$, a random time-change of $X$ that preserves the martingale observables of $X$ but is forced by the slowing-down effect to stay inside a bounded subset $\Theta \subset \Lambda$. This is a stochastic analogue of cutoff function arguments in analysis. Transforming calculations to the level of the slowed-down process then allows us to avoid boundary problems and to handle with the original process $X$ having a finite life-time. With the latter problem cured and with H\"{o}rmander's criterion assumed, $\hat{X}_t$ are well-known to have smooth particle densities that solve Kolmogorov's PDEs (recalled in Theorem~\ref{thm:existence of a smooth density}). These are dual PDEs to the weak PDE for martingale observables, which are then obtained after careful limit arguments to remove the cutoff functions.
	
	The rest of the article is organized as follows. In Section \ref{sec:main} we present and discuss our main results. The smoothness of martingale observables is discussed in Section \ref{subsec:smoothness} while the generalized Feynman-Kac formula and the case of $X$-harmonic problems is presented in Sections \ref{subsec:FK}--\ref{subsec:intro X-harmonic}. The application to SLEs is presented in Section \ref{subsec:SLE}. The proofs are presented in Section \ref{sec:proofs}: we begin with some preliminaries on known results in Section \ref{subsec:PDE background}, the slowed-down process is introduced in Section \ref{subsec:slowed-down}, and the proofs of our main results are presented in Sections \ref{subsec:main-proof1}-\ref{subsec:main-proof2}. Finally, proofs of some technical lemmas are postponed to Section \ref{subsec:technical stuff}. 
	
	\subsubsection*{\textbf{Acknowledgements}} We thank Benny Avelin, Eveliina Peltola and Paavo Salminen for insightful discussions. A.K. was supported by the Academy of Finland, grant number 339515.

	\section{Main results}
	\label{sec:main}
	Throughout this article, we fix the following setup and notations. Let $\Lambda \subset \mathbb{R}^n$ be an open set. We consider an $\mathbb{R}^n$-valued It\^{o} processes $X_t$ which, up to the hitting time to $\Lambda^c$, satisfies the time-homogeneous SDE
	\begin{align}
		\label{eq:main process of interest}
		\diff X^{i}_t = \sum_{j=1}^d \sigma_{i,j}(X_t) \diff B^{j}_t +  b_{i}(X_t) \diff t, \qquad 1 \leq i \leq n,
	\end{align}
	where $B_t  = (B^1_t,\ldots, B^d_t)$ is a $d$-dimensional Brownian motion independent of $X_0$ and $\sigma: \Lambda \to \mathbb{R}^{n \times d}$ and $b: \Lambda \to \mathbb{R}^n$ are componentwise \textit{smooth} functions. The related filtered probability space is $(\Omega, \mathcal{F}, (\mathcal{F}_t)_{t \geq 0}, \mathbb{P})$, where the filtration $(\mathcal{F}_t)_{t \geq 0}$ satisfies the usual assumptions, i.e., it is right-continuous and $\mathcal{F}_0$ contains all $\mathbb{P}$-null sets.
	
	Let us also define $a: \Lambda \to \mathbb{R}^{n \times n}$ as $a(x)=\sigma(x) \sigma(x)^T$, i.e., $a_{i, j}(x) = \sum_{q=1}^d \sigma_{i, q}(x)\sigma_{j,q}(x)$, and the differential operators $\mathcal{G}$ on twice differentiable functions $\Lambda \to \mathbb{R}$ via
	\begin{align}
		\label{eq:spatial generator}
		\mathcal{G} = \tfrac{1}{2} \sum_{i=1}^n \sum_{j=1}^n a_{i, j}(x) \partial_{ij} + \sum_{i = 1}^n  b_{i}(x) \partial_i,
	\end{align}
	i.e., $\mathcal{G}$ is the spatial-variable part of the generator of $X$. The dual operator $\mathcal{G}^*$ of $\mathcal{G}$ is defined by its action on twice-differentiable functions $\varphi: \Lambda \to \mathbb{R}$:
	\begin{align*}
		\big( \mathcal{G}^* \varphi \big) (x) = \tfrac{1}{2} \sum_{i=1}^n \sum_{j=1}^n \partial_{ij} \big( a_{i, j}(x) \varphi(x) ) - \sum_{i = 1}^n \partial_i \big( b_{i}(x) \varphi(x) \big).
	\end{align*}
	We also define the smooth vector fields
	\begin{align}
		\label{eq:Lie generators 1}
		\mathcal{U}_q &:= \sum_{i=1}^n \sigma_{i, q} (x) \partial_i, \qquad q=1, \ldots, d \qquad \text{and} \\
		\label{eq:Lie generators 2}
		\mathcal{U}_0 &:= \sum_{i= 1}^n b_i(x) \partial_i - \tfrac{1}{2} \sum_{q= 1}^d \sum_{i= 1}^n \big( \mathcal{U}_q \sigma_{i, q} (x) \big) \partial_i.
	\end{align}
	Note that whence $\mathcal{G} = \tfrac{1}{2} \sum_{q=1}^d \mathcal{U}^2_q + \mathcal{U}_0$, 
	and the above vector fields appear in (several variants of) H\"{o}rmander's theorem. In particular, we shall repeatedly refer to the condition (see Section~\ref{subsec:PDE background} for details):
	\begin{align}
		\nonumber
		& \text{Condition \eqref{eq:Hormander criterion}: The Lie algebra generated by } \{ \mathcal{U}_q \}_{q=1}^d \\
		\label{eq:Hormander criterion}
		\tag{H}
		&  \text{and } \{ [\mathcal{U}_q, \mathcal{U}_0] \}_{q=1}^d \text{ is of dimension $n$ at every $x \in \Lambda$.}
	\end{align}

	\subsection{Smoothness of martingale observables}
	\label{subsec:smoothness}
	Our first main result is that, under the H\"{o}rmander condition on the process $X$, \textit{martingale observables} are weak (often strong) solutions to the second order generator PDE.
	
	\begin{definition}
		\label{def:mgale obs}
		Fix $T>0$ and let $f: \Lambda \times [0,T) \to \mathbb{R}$, $ g: \Lambda  \to \mathbb{R}$ and $ h: \Lambda \times [0,T) \to \mathbb{R}$ be Borel measurable functions. We say that $f$ is a \textit{$(g, h)$-martingale observable for the process $X$} (for short, a \textit{martingale observable}) if for any launching time $t_0 \in [0,T)$ and point $X_{t_0} = x_0 \in \Lambda$
		\begin{align}
			\label{eq:def of mgale observable}
			M_t &: = \gamma_t f(X_t, t) + H_t, \qquad t  > t_0, \qquad \text{where}\\
			\nonumber
			\gamma_t = \gamma_{t_0, t } &:= \exp \left( \int_{s= t_0}^t g(X_s) \diff s \right) \quad \text{and} \quad H_t = H_{t_0, t} := \int_{s=t_0}^t \gamma_s h(X_s, s) \diff s ,
		\end{align}
		is a local martingale up to the exit time of $\Lambda \times [0, T)$ by the pair $(X_t, t)$.
	\end{definition}
	
	\begin{theorem}
		\label{thm:mgale observables solve PDEs}
		Fix $T>0$ and let $f: \Lambda \times [0,T) \to \mathbb{R}$ be locally bounded and Borel measurable, and $ g: \Lambda  \to \mathbb{R}$ and $ h: \Lambda \times [0,T) \to \mathbb{R}$ continuous.
		If $f$ is a $(g,h)$-martingale observable for the process $X$,
		and condition~\eqref{eq:Hormander criterion} is satisfied for that process, then $f$ is a weak solution to the PDE
		\begin{align}
			\label{eq:thm-weak-pde}
			\mathcal{G} f(x, t)  + \partial_t f(x, t) + g(x) f (x, t) + h(x, t) = 0, \quad (x, t) \in \Lambda \times (0, T),
		\end{align}
		i.e., for all smooth, compactly-supported $\varphi: \Lambda \times (0, T) \to \mathbb{R}$, we have
		\begin{align}
			\label{eq:weak main PDE}
			\int_{x \in \Lambda} \int_{t=0}^T  \Big( f(x, t) ( \mathcal{G}^*  - \partial_t + g(x)) \varphi (x, t) + h(x, t) \varphi (x, t) \Big) \diff t \diff^n x = 0.
		\end{align}
	\end{theorem}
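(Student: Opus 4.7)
The plan is to implement the slowed-down-process strategy outlined in the introduction, converting the martingale observable property into an identity for a smooth Kolmogorov density which is then dualized into the weak PDE. Fix a smooth compactly-supported test function $\varphi: \Lambda \times (0,T) \to \mathbb{R}$, and pick bounded open sets with $\mathrm{supp}_x(\varphi) \subset \Theta' \Subset \Theta \Subset \Lambda$.

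First, I would build the slowed-down process. Take a smooth cutoff $\rho: \Lambda \to [0,1]$ with $\rho \equiv 1$ on $\Theta'$ and $\rho \equiv 0$ outside $\Theta$, and let $\hat X$ be the It\^o process with coefficients $\hat \sigma = \rho \sigma$ and $\hat b = \rho^2 b$, extended by zero outside $\Theta$; equivalently, $\hat X$ is a random time-change of $X$ with clock $\diff \tau / \diff t = \rho(X_{\tau})^2$. The process $\hat X$ has smooth compactly supported coefficients, is globally defined, and stays in $\overline{\Theta}$, thus killing the finite-life-time issue for $X$. Time-changing preserves local martingales, so the time-changed version $\hat M$ of $M$ (with the $\gamma$-factor and $H$-integral rewritten in the slow clock, amounting to replacing $g, h$ by $\rho^2 g, \rho^2 h$) remains a local martingale for $\hat X$. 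On $\{\rho > 0\}$ the vector fields $\rho \mathcal U_q$ and $\rho^2 \mathcal U_0$ generate, modulo multiplication by positive smooth factors, the same Lie algebra as $\mathcal U_q$ and $\mathcal U_0$, so condition \eqref{eq:Hormander criterion} transfers to $\hat X$ there.

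Second, I would use the dual Kolmogorov PDE. By the smooth density result referred to as Theorem~\ref{thm:existence of a smooth density}, the transition density $\hat p_s(x_0, y)$ of $\hat X$ is smooth on the relevant region and satisfies the forward Kolmogorov PDE $\partial_s \hat p_s = \hat{\mathcal G}^* \hat p_s$ in $y$ (and the corresponding backward PDE in $x_0$). Applying optional stopping at exit times from $\Theta$ and invoking dominated convergence via the local boundedness of $f$, the local-martingale property of $\hat M$ upgrades to the pointwise identity $f(x_0, t_0) = \mathbb{E}_{x_0}\!\big[\hat\gamma_{t_0, t_0+s}\, f(\hat X_s, t_0+\tau(s)) + \hat H_{t_0, t_0+s}\big]$, valid for all $s \in [0, T-t_0)$. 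Since $\rho \equiv 1$ on $\Theta'$, the slow clock equals the real clock there, $\hat{\mathcal G}$ equals $\mathcal G$, and $\hat\gamma, \hat H$ coincide with $\gamma, H$; consequently $\mathrm{supp}(\varphi)$ sees only the original dynamics.

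Third, I would multiply this identity by $\varphi$-derived weights, integrate over $(x_0, t_0)$, and transfer the time derivative coming from $\varphi$ onto $\hat p_s$ via the forward Kolmogorov PDE, then integrate by parts in $y$ against $f$; the compact support of $\varphi$ kills all boundary terms, the $\gamma$-factor produces the $g f \varphi$ contribution, and the $\hat H$-integral yields the $h\varphi$ contribution, reproducing \eqref{eq:weak main PDE}. The main technical obstacle I anticipate is the final algebraic-analytic assembly: one has to arrange the Fubini interchanges, the Kolmogorov-PDE substitutions, and the integration by parts so that the four terms $\mathcal G f + \partial_t f + g f + h$ emerge with the correct signs, justifying the absolute integrability via the smoothness and subprobabilistic bounds on $\hat p_s$. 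A secondary conceptual point is confirming that time-changing by a spatially varying factor $\rho^2$ preserves both the H\"ormander condition on $\{\rho > 0\}$ and the martingale observable structure, without relying on any boundedness or ellipticity of the original $\sigma, b$.
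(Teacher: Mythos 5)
Your overall architecture — slow the process down by a cutoff so it never leaves a compact subset, invoke the smooth Kolmogorov densities of the slowed-down process under H\"ormander, and dualize against the test function — is exactly the paper's strategy, and your remarks on preserving the local martingale property and the Lie algebra under the time change are correct in substance. However, the final assembly, which you yourself flag as the main obstacle, contains a directional error that would make the argument circular. You propose to use the \emph{forward} Kolmogorov equation $\partial_s \hat p_s = \hat{\mathcal G}^*_y \hat p_s$ in the terminal variable $y$ and then ``integrate by parts in $y$ against $f$.'' That moves spatial derivatives onto $f$, which is only Borel measurable and locally bounded; producing $\mathcal G f$ pointwise is precisely what the theorem is trying to establish, so it cannot be an intermediate step. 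The paper instead differentiates $s \mapsto \mathbb{E}_v[f_\epsilon(\hat X_s, t+s)]$ using the \emph{backward} equation $\partial_s\hat\rho_v(x,s)=\hat{\mathcal G}_v\hat\rho_v(x,s)$ in the launching variable $v$ — the variable in which the test function lives — so that the subsequent integration by parts lands every derivative on $\varphi$ (yielding $\mathcal G_v^*\varphi$) and on the smooth density, never on $f$.

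Two further genuine gaps. First, $f$ has no assumed time regularity, yet your plan requires differentiating the expectation in $s$ where $s$ enters both through the density and through the time slot of $f$; the term $\partial_s f(\cdot, t_0+s)$ is meaningless for a general Borel $f$. The paper's Step 1 mollifies $f$ in time, $f_\epsilon(x,t)=\int f(x,t+s)\psi_\epsilon(s)\,\diff s$, and crucially observes that by time-homogeneity of the SDE this is an average over random launching delays, so $f_\epsilon$ is \emph{still} a martingale observable (with $h$ replaced by $h_\epsilon$); this gives locally bounded time derivatives and, after a separate lemma, continuity of $f_\epsilon$, both of which are used in the limits $s\downarrow 0$ and are removed only at the very end by letting $\epsilon\to 0$ in the weak formulation. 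Second, the time argument inside your pointwise identity is the path functional $t_0+\beta(s)$, so $\mathbb{E}_{x_0}[f(\hat X_s, t_0+\beta(s))]$ is \emph{not} an integral against $\hat p_s(x_0,\cdot)$ alone. The paper needs a dedicated estimate (its Lemma on removing the time change: $\mathbb{P}[\tau_{A^c}\le s]=o(s)$, proved via moment bounds and Kolmogorov--Chentsov) to replace $\beta(s)$ by $s$ with an $o(s)$ error before the density can be brought in; since the whole computation is a derivative at $s=0$, an $o(1)$ error would not suffice, so this step cannot be waved through.
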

	
	The classical question of regularity of a weak solution is of course particularly interesting in the context of It\^{o} calculus (for one application, see Section~\ref{subsec:SLE}).
	The above assumption on Lie algebras implies the H\"ormander condition for the operator $\mathcal{G} + \partial_t$ (see Remark~\ref{rem:different Lie algebras}). In particular, if the functions $g$ and $h$ are smooth, $f$ is a smooth strong solution to the PDE~\eqref{eq:thm-weak-pde}. This is what we mean by ``smoothness of martingale observables''. Conversely, straightforward It\^{o} calculus shows that for a smooth solution $f$ to the PDE~\eqref{eq:thm-weak-pde}, the process~\eqref{eq:def of mgale observable} is a martingale. Hence, this PDE yields an equivalent characterization of martingale observables when $g$ and $h$ are smooth. For non-smooth $g$ and $h$, such smoothness of martingale observables is generally \textit{not} true;\footnote{
		Under uniform ellipticity of the diffusion, classical parabolic regularity results often guarantee strong solutions. However, such diffusions are not our main focus here.
	} such martingale observables still exist (even continuous martingales), see Lemma~\ref{lem:Feynman--Kac mgale obsesrvable} (and Remark~\ref{remark:continuity}).
	
	As a na\"{i}ve but illustrative example of what can happen without condition~\eqref{eq:Hormander criterion}, let $X_t = (B^1_t, 0)$ be the one-dimensional Brownian motion embedded in $\mathbb{R}^2$. Now, any $f$ of the form $f(x, t) = f(x)=a(x_2)x_1+b(x_2)$ is a  martingale observable (with $g=h=0$), but not necessarily smooth. 
	
	\paragraph{\textbf{Generalized observables}}
	
	Let us point out a tiny generalization of Theorem~\ref{thm:mgale observables solve PDEs} that turned out to be useful in~\cite{ongoing}. In this subsection only, we let $f: \Lambda \times [0,T) \to \mathbb{C}$, $ g: \Lambda \times \Lambda \to \mathbb{C}$ and $ h: \Lambda \times [0,T) \to \mathbb{C}$, and we define (complex-valued) \textit{generalized $(g,h)$-martingale observables} as in Definition~\ref{def:mgale obs}, except with
	\begin{align*}
	\gamma_t = \gamma_{t_0, t } &:= \exp \left( \int_{s= t_0}^t g(X_s, X_{t_0}) \diff s \right).
	\end{align*}
	With cosmetic changes\footnote{A complex-valued martingale has real and imaginary parts that are real martingales, and the proof is based on a small-times limit, equating $X_s$ and $X_{t_0}$.} in our proof, one obtains:
	
		\begin{theorem}
		\label{thm:mgale observables solve PDEs 2}
		Fix $T>0$ and let $f: \Lambda \times [0,T) \to \mathbb{C}$ be locally bounded and Borel measurable, and $ g: \Lambda \times \Lambda  \to \mathbb{C}$ and $ h: \Lambda \times [0,T) \to \mathbb{C}$ continuous.
		If $f$ is a generalized $(g,h)$-martingale observable for the process $X$,
		and condition~\eqref{eq:Hormander criterion} is satisfied for that process, then $f$ is a weak solution to the complex PDE (two coupled real PDEs)
		\begin{align}
			\label{eq:thm-weak-pde}
			\mathcal{G} f(x, t)  + \partial_t f(x, t) + g(x,x) f (x, t) + h(x, t) = 0, \quad (x, t) \in \Lambda \times (0, T),
		\end{align}
		i.e., for all smooth, compactly-supported $\varphi: \Lambda \times (0, T) \to \mathbb{R}$, we have
		\begin{align}
			\label{eq:weak main PDE}
			\int_{x \in \Lambda} \int_{t=0}^T  \Big( \Re \big( f(x, t) \big) ( \mathcal{G}^*  - \partial_t ) \varphi (x, t) + \Re \big( f(x, t) g(x,x) +  h(x, t)  \big) \varphi (x, t) \Big) \diff t \diff^n x = 0,
		\end{align}
		and similarly for imaginary parts.
	\end{theorem}
	
	Note that Hörmander's theory is intrinsically real, and the smoothness of martingale observables is hence not guaranteed anymore. Also, even if they were smooth, due to the dependence of $g$ on $X_{t_0}$, the converse (any PDE solution being a generalized martingale observable) is \textit{not} generally true. For these reasons among others, Theorem~\ref{thm:mgale observables solve PDEs} is generally speaking a more natural setup. Nevertheless, such generalized martingale observables turn out to intrinsically appear in the mathematical-physics application of~\cite{ongoing}.
	
	\subsection{Feynman--Kac formulas}
	\label{subsec:FK}
	Classical Feynman--Kac formulas provide a stochastic representation of the (sometimes weak) solution to boundary value problems of the PDE~\eqref{eq:thm-weak-pde}, typically of the form  
	\begin{align}
		\label{eq:parabolic BVP}
		\begin{cases}
			\mathcal{G} f(x, t)  + \partial_t f(x, t) + g(x) f (x, t) + h(x, t) = 0, \qquad (x, t) \in \Lambda \times (0, T) \\
			\lim_{(w, s ) \to (x, t)} f(w, s) = \psi (x, t),  \qquad (x, t) \in   \partial \Lambda \times (0,T]  \; \cup \; \Lambda \times \{ T \},
		\end{cases}
	\end{align}
	where the boundary value function $\psi$ is given on the \textit{cylinder boundary} $\mathcal{C} := \partial \Lambda \times (0,T]  \; \cup \; \Lambda \times \{ T \} $.\footnote{
		If we start from the PDE~\eqref{eq:parabolic BVP}, with $a_{i,j}(x)$ smooth and the matrix $a$ positive semi-definite for all $x \in \Lambda$, then taking $\sigma(x) = a(x)^{1/2}$ to be the (unique) symmetric, positive semi-definite matrix square-root of $a(x)$ yields a process $X$ corresponding to this PDE. Indeed, then $a = \sigma \sigma^T$ by definition, and the smoothness of $\sigma$ follows by applying to a suitably scaled matrix $a$ the series expansion
		\begin{align}
			(I-M)^{1/2} = I-\tfrac{1}{2}M - \sum_{k=2}^\infty \tfrac{(2k-3)!!}{2^k k!}M^k,
		\end{align}
		valid for symmetric, positive semi-definite matrices $M$ with all eigenvalues below two.
	}
	As an application of Theorem~\ref{thm:mgale observables solve PDEs}, we formulate a multidimensional analogue which, to the best of our knowledge, widens the scope of such formulas, while also providing solutions in a strong sense.

	The solution candidate in Feynman--Kac theorems, and also here, is
	\begin{align}
		\label{eq: Feynman--Kac solution formula}
		f(x, t) := \mathbb{E} [\gamma_{t,\tau} \psi(X_\tau, \tau) + H_{t,\tau} \; |\; X_t = x],
		\qquad \text{where } \tau = \tau_{\partial \Lambda} \wedge T.
	\end{align}
	Due to the time-homogeneity of $X$,~\eqref{eq: Feynman--Kac solution formula} can be seen as ``launching the process from $x$ at the time $t$''. The function $f$ above is \textit{well-defined} if, for all launching points $(x, t) \in \Lambda \times [0, T)$, the processes $\gamma_{t, \cdot}$, $X_\cdot$ and $H_{t, \cdot}$ are all continuous up to \textit{and including} $\tau$ and the random variable inside the expectation is integrable. In that case,~\eqref{eq: Feynman--Kac solution formula} is almost manifestly a martingale observable (Lemma~\ref{lem:Feynman--Kac mgale obsesrvable}). By Theorem~\ref{thm:mgale observables solve PDEs}, $f(x,t)$ will thus in many cases provide smooth strong solutions to the PDE in~\eqref{eq:parabolic BVP}.
	
	Additionally, some boundary regularity of $\Lambda$ is needed to guarantee the desired boundary values for $f$; this already happens in the Brownian motion case and with continuous boundary values $\psi$. We will use the concept of $X$-regularity stemming from the analogous property of the Brownian motion. 
	\begin{definition}
		Assume that $X_t$ is continuous up to and including $\tau_{\partial \Lambda}$. We say that a point $x \in \partial \Lambda$ is \textit{$X$-regular}, if for any fixed $\delta > 0$, 
		\begin{align*}
			\mathbb{P}_w [|X_{\tau_{\partial \Lambda}} - x| < \delta \text{ and } \tau_{\partial \Lambda} < \delta ] \to 1 \qquad \text{as } w \to x,
		\end{align*}
		where $\mathbb{P}_w$ denotes the conditional probability given $X$ is launched from $w$.
		We say that $\Lambda$ has \textit{$X$-regular boundary} if every boundary point is $X$-regular.
	\end{definition}
	For instance, in $n=2$ dimensions and for $X$ being the two-dimensional Brownian motion, if $\partial \Lambda$ contains an isolated point, it is not $X$-regular (the harmonic measure of an isolated boundary point is zero in the plane); another classic example is the Lebesgue thorn for the three-dimensional Brownian motion.
	One could even construct a process where no boundary point is $X$-regular; an example are the slowed-down processes in Section \ref{subsec:slowed-down}.
	
	Our \emph{generalized} Feynman--Kac formula is the following.
	\begin{theorem}
		\label{thm:Feynman--Kac}
		Let $h: \Lambda \times [0, T ) \to \mathbb{R}$, $\psi: \mathcal{C} \to \mathbb{R}$, and $g: \Lambda \to \mathbb{R}$ be bounded continuous functions. If the process $X$ is continuous up to and including $\tau_{\partial \Lambda}$, the boundary of $\Lambda$ is $X$-regular, and condition~\eqref{eq:Hormander criterion} is satisfied for $X$,
		then the generalized Feynman--Kac formula~\eqref{eq: Feynman--Kac solution formula} gives a weak solution to~\eqref{eq:parabolic BVP}. If furthermore $g$ and $h$ are smooth, then $f$ is smooth and the unique bounded solution to~\eqref{eq:parabolic BVP}.
	\end{theorem}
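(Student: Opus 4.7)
My plan is to verify that \eqref{eq: Feynman--Kac solution formula} defines a bounded martingale observable, apply Theorem~\ref{thm:mgale observables solve PDEs} to obtain the weak PDE, then match the boundary data on $\mathcal{C}$ via $X$-regularity, and finally deduce smoothness and uniqueness by a standard It\^o argument. First, since $g,h,\psi$ are bounded and $\tau\le T$, the factor $\gamma_{t,\tau}$ and the additive term $H_{t,\tau}$ are uniformly bounded by constants depending only on $\|g\|_\infty,\|h\|_\infty,T$; continuity of $X$ up to and including $\tau$ is assumed and $\gamma_{\cdot}, H_\cdot$ are continuous as time-integrals, so $f$ is well defined and uniformly bounded on $\Lambda\times[0,T)$. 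The martingale-observable property is the content of Lemma~\ref{lem:Feynman--Kac mgale obsesrvable}: by time-homogeneity and the strong Markov property, conditioning at an intermediate time $t_1$ gives $\mathbb{E}[\gamma_{t_0,\tau}\psi(X_\tau,\tau)+H_{t_0,\tau}\mid \mathcal{F}_{t_1}]=\gamma_{t_0,t_1} f(X_{t_1},t_1)+H_{t_0,t_1}$. Boundedness of $f$ and continuity of $g,h$ then make Theorem~\ref{thm:mgale observables solve PDEs} applicable, yielding the weak PDE \eqref{eq:thm-weak-pde} on $\Lambda\times(0,T)$.

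Next, to match the boundary data, fix $(x,t)\in\mathcal{C}$ and let $(w,s)\to(x,t)$ with $(w,s)\in\Lambda\times[0,T)$. If $x\in\partial\Lambda$, then $X$-regularity yields $X_\tau\to x$ and $\tau-s\to 0$ in $\mathbb{P}_w$-probability, so $\gamma_{s,\tau}\to 1$ and $H_{s,\tau}\to 0$; together with continuity of $\psi$ on $\mathcal{C}$ and bounded convergence this gives $f(w,s)\to\psi(x,t)$. If instead $x\in\Lambda$ and $t=T$, continuity of $X$ at the interior point $x$ forces the process to stay in $\Lambda$ throughout $[s,T]$ with probability tending to one as $(w,s)\to(x,T)$, so $\tau=T$ and $X_\tau\to x$ in probability, and the same bounded convergence argument delivers $f(w,s)\to\psi(x,T)$.

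For the final clause, smoothness of $g,h$ together with the discussion after Theorem~\ref{thm:mgale observables solve PDEs} (where H\"ormander's criterion upgrades the bounded weak solution to a smooth strong one) promotes $f$ to a smooth strong solution of \eqref{eq:parabolic BVP}. Uniqueness among bounded solutions is then a standard exercise: if $f_1,f_2$ are two such solutions, the difference $u=f_1-f_2$ is a smooth bounded solution to the homogeneous problem ($\psi\equiv 0$, $h\equiv 0$), and It\^o's formula applied to $\gamma_{t,\cdot}u(X_\cdot,\cdot)$ yields a local martingale that is bounded thanks to the uniform bounds on $u$ and $\gamma$; optional stopping at $\tau$ combined with the vanishing of $u$ on $\mathcal{C}$ (applying the boundary step above to $u$ itself) gives $u(x,t)=\mathbb{E}[\gamma_{t,\tau}u(X_\tau,\tau)\mid X_t=x]=0$.

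The main obstacle is the boundary step: one must use $X$-regularity (not mere topological regularity of $\partial\Lambda$) in the spatial regime and argue consistency of the two limits across the ``corner'' where $\partial\Lambda$ meets $\{T\}$, making sure $\tau-s\to 0$ holds uniformly enough for bounded convergence. Everything else is a boundedness check, an appeal to Theorem~\ref{thm:mgale observables solve PDEs}, or routine It\^o calculus on the already-smoothed PDE.
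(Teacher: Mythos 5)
Your proposal is correct and follows essentially the same route as the paper: well-definedness and the martingale-observable property via Lemma~\ref{lem:Feynman--Kac mgale obsesrvable}, the weak PDE via Theorem~\ref{thm:mgale observables solve PDEs}, boundary matching via $X$-regularity exactly as in Lemma~\ref{lem:bdary values of FK observables} (including the separate treatment of the terminal slice $\Lambda\times\{T\}$ by short-time continuity), smoothness by hypoellipticity, and uniqueness by optional stopping of the bounded martingale built from a competing solution. The only cosmetic difference is that you apply optional stopping to the difference $u=f_1-f_2$ rather than directly to the second solution $\tilde f$, which is equivalent.
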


	This is simultaneously an existence, uniqueness and regularity result, as well as a semi-explicit, stochastic solution formula. We deliver some remarks.
	
	\textit{1. Assumptions in the theorem} are comparatively general: one would hardly expect to prove a smooth Feynman--Kac type formula without, e.g., regular boundary  or H\"{o}rmander's condition.\footnote{
		In his original paper, H\"{o}rmander's argues that his condition is ``almost necessary'' to guarantee the smoothness of a generalized solution~\cite[Section~1]{Hormander-original_hypoellipticity_paper}.
	} We also note that even mild discontinuities in the boundary values $ \psi$ 
	(or the input functions $g, h$, at the cost of smoothness), such as $\psi(x, t) = \mathbb{I}\{ t = T\}$,
	can in many cases be handled by approximating $\psi$ from above and below by continuous functions and taking limits of equations~\eqref{eq: Feynman--Kac solution formula} and~\eqref{eq:weak main PDE}.
	
	\textit{2. Existence:}
	classical (non-stochastic) PDE theory guarantees the existence of strong solutions to \eqref{eq:parabolic BVP} provided that $\mathcal{G}$ is uniformly elliptic and $\Lambda$ has sufficiently regular boundary. 
	Beyond ellipticity, there are still powerful analysis techniques to solve PDEs such as the Hille--Yosida theorem. In this approach however, boundary values may still pose problems as one needs to verify assumptions on the operator with expanded domain that includes the boundary values. Hence our result could also have interest as an existence result (at the very least for probabilists, due to its convenient interpretation).
	
	\textit{3. Uniqueness:} it seems clear that at least some growth conditions are needed to guarantee uniqueness; we have not optimized our proof in this regard. Indeed, already for the one-dimensional (i.e., $\Lambda=\mathbb{R}$) heat equation (a time-reversal of~\eqref{eq:parabolic BVP}) Tychonoff gave the classic example~\cite{Tychonoff} of the non-uniqueness and a growth bound to guarantee the uniqueness. Note also that this discussion essentially only concerns unbounded domains $\Lambda$: for bounded $\Lambda$ and smooth $g, h$, by the hypoellipticity of $\mathcal{G} + \partial_t $ and the assumed continuous boundary values, any solution is continuous (or continuously extendable) in the set $\overline{\Lambda} \times [\epsilon, T]$ for any $\epsilon > 0$, and thus by compactness bounded in it. Uniqueness of the solution then follows.
	
	\textit{4. Regularity:} Our result can be contrasted to earlier ones that often only seem to give solutions in a ``stochastic sense'', see e.g.~\cite{Oksendal-SDE}.
	
	\subsection{$X$-harmonic problems}
	\label{subsec:intro X-harmonic}
	
	In this subsection, we illustrate our results by considering time-invariant PDEs of the form
	\begin{align}
		\label{eq:X-harmonic PDE problem}
		\begin{cases}
			\mathcal{G} f(x)  + g(x) f (x) + h(x) = 0, \qquad x \in \Lambda \\
			\lim_{w \to x} f(w) = \psi (x),  \qquad x \in   \partial \Lambda.
		\end{cases}
	\end{align}
	Following~\cite[Section~9]{Oksendal-SDE}, we call these $X$-harmonic problems, as they can be viewed as natural generalizations of the harmonic Dirichlet problem related to the Brownian motion case. We are still mainly concerned with \textit{non-elliptic} problems.
	We note that similar results (under the assumptions (a) or (b) below) have been studied in the literature: 
	an analysis account can be found in~\cite{Bony} and a stochastic account in~\cite{Stroock-Var}. 
	
	Let $\tau := \tau_{\partial \Lambda}$ throughout this subsection and set
	\begin{align}
		\label{eq:X-harmonic FK formula}
		f(x) := \mathbb{E}_{x} [\gamma_\tau \psi(X_\tau) + H_\tau]
	\end{align}
	which is \textit{well-defined} if $\tau < \infty$ a.s., $\gamma_t$, $X_t$ and $H_t$ are all continuous up to \textit{and including} $\tau$ and the random variable inside the expectation is integrable for all launching points $x \in \Lambda$. In this case we obtain the following:

	\begin{theorem}
		\label{thm:X-harmonic FK}
		Consider the PDE problem~\eqref{eq:X-harmonic PDE problem} with the functions $h, \psi$, and $g$ being bounded and continuous. Suppose that 
		$\tau : = \tau_{\partial \Lambda}$ is finite a.s.,
		the process $X$ is continuous up to and including $\tau$, the boundary of $\Lambda$ is $X$-regular, 
		and condition~\eqref{eq:Hormander criterion} is satisfied for $X$. Set  $C = \sup_{y \in \Lambda} g(y)$ and suppose also that at least one of the following holds:
		\begin{itemize}[noitemsep]
			\item[a)] $ C  < 0$; or
			\item[b)] $C=h=0$; or
			\item[c)] for some $\alpha > 0$,  $\mathbb{E}_w [\tau^\alpha e^{C \tau}] $ are finite for every $w$, and $\mathbb{E}_w [e^{C \tau}] \to 1$ as $w \to x \in \partial \Lambda$.
		\end{itemize}
		Then the function $f$ in~\eqref{eq:X-harmonic FK formula} is well-defined and a continuous weak solution to~\eqref{eq:X-harmonic PDE problem}. 
		If furthermore $g$ and $h$ are smooth, then $f$ is smooth and the unique bounded solution to~\eqref{eq:X-harmonic PDE problem}.
	\end{theorem}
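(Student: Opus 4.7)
The strategy is to verify that the candidate $f(x)$ in~\eqref{eq:X-harmonic FK formula}, viewed as a trivially time-independent function on $\Lambda\times[0,T)$ for some arbitrary $T>0$, is a $(g,h)$-martingale observable for $X$ in the sense of Definition~\ref{def:mgale obs}. Theorem~\ref{thm:mgale observables solve PDEs} will then yield that $f$ is a weak solution to~\eqref{eq:X-harmonic PDE problem}, since the $\partial_t f$ term in~\eqref{eq:thm-weak-pde} vanishes identically. This leaves three tasks to handle: well-definedness and integrability of $f$, boundary continuity, and smoothness/uniqueness in the smooth-coefficient case.

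Well-definedness is handled separately under the three assumption sets. Under (a), the pointwise inequalities $\gamma_t\le e^{Ct}\le 1$ and $|H_\tau|\le \|h\|_\infty/|C|$ give a uniform $L^\infty$ bound on the integrand in~\eqref{eq:X-harmonic FK formula}. Under (b), $g\le 0$ gives $\gamma_\tau\le 1$, and $H\equiv 0$. Under (c), the same bounds with $e^{C\tau}$ in place of $1$ combine with the hypothesized finiteness of $\mathbb{E}_w[\tau^\alpha e^{C\tau}]$ (used through H\"older for the $H_\tau$ term) to yield integrability. In all three cases the strong Markov property together with the tower property then shows, by a standard computation, that $M_t = \gamma_{t_0,t}f(X_t)+H_{t_0,t}$ is a true, uniformly integrable martingale on $[t_0,T)$ stopped at $\tau$, establishing the martingale observable property. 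The boundary limit $\lim_{w\to x}f(w)=\psi(x)$ at $x\in \partial\Lambda$ is then obtained by splitting on the ``good'' event $A_\delta := \{\tau<\delta,\ |X_\tau-x|<\delta\}$: by $X$-regularity $\mathbb{P}_w(A_\delta)\to 1$ as $w \to x$, and continuity of $g, h, \psi$ forces $\gamma_\tau\to 1$, $H_\tau\to 0$ and $\psi(X_\tau)\to \psi(x)$ on $A_\delta$. The complement is controlled by the uniform bound under (a)--(b), and under (c) by combining $\mathbb{E}_w[e^{C\tau}]\to 1$ (which upgrades $\gamma_\tau\to 1$ in probability to $L^1_w$-convergence, since $|\gamma_\tau|\le e^{C\tau}$) with the $\tau^\alpha e^{C\tau}$-moment to handle the $H_\tau$ term.

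For smooth $g, h$, smoothness of $f$ then follows from the H\"ormander hypoellipticity of $\mathcal{G}+g$, exactly as in the discussion following Theorem~\ref{thm:mgale observables solve PDEs}. Uniqueness among bounded solutions is the standard martingale argument: for any bounded smooth solution $\tilde f$ the process $\gamma_t\tilde f(X_t)+H_t$ is a local martingale, uniformly integrable up to $\tau$ in each of (a)--(c) by the estimates above, so optional stopping at $\tau$ gives $\tilde f(x)=f(x)$. The main technical obstacle I expect is case (c): without a uniform $L^\infty$ bound on the integrand, each of well-definedness, the boundary limit, and the optional-stopping step must be squeezed out of the joint interplay of $\mathbb{E}_w[e^{C\tau}]\to 1$ and the finite moment $\mathbb{E}_w[\tau^\alpha e^{C\tau}]$, rather than being obtained by any crude domination.
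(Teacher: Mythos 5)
Your proposal is correct and follows essentially the same route as the paper: establish the martingale-observable property via the tower property (the paper's Lemma~\ref{lem:Feynman--Kac mgale obsesrvable}), invoke Theorem~\ref{thm:mgale observables solve PDEs}, obtain the boundary limits by splitting on the good event supplied by $X$-regularity with the bounds $\gamma_\tau\le e^{C\tau}$ and $|H_\tau|\le \Vert h\Vert_\infty(e^{C\tau}-1)/C$ (the paper's Lemma~\ref{lem:bdary values of X-harmonic observables}), and finish with hypoellipticity and optional stopping for uniqueness. The only cosmetic difference is in the optional-stopping step under (c), where you argue uniform integrability by domination while the paper verifies $\mathbb{E}_x[|\tilde M_T|\mathbb{I}\{\tau>T\}]\to 0$ explicitly via Markov's inequality using the $\mathbb{E}_w[\tau^\alpha e^{C\tau}]$ moment; both implementations of the same idea work.
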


	Compared to the moderate assumptions of previous theorem, assumptions (a)--(c) may seem restrictive. To illustrate their cruciality, consider a constant function $g(x)=S>0$, where $-S$ belongs to the spectrum of $\mathcal{G}$, i.e., there exists a smooth $\tilde{f} \neq 0$ such that $\mathcal{G} \tilde{f} + S \tilde{f} = 0$ and $\tilde{f}_{|\partial \Lambda} = 0$. Now, the uniqueness part of the theorem clearly ceases to hold true (for any $\psi$ and $h$ with at least one solution). To be completely explicit, this is the case for $X$ being the one-dimensional Brownian motion (so $\mathcal{G} = \tfrac{1}{2} \partial_{xx}$), $\Lambda = [-1, 1]$, $S=\pi^2/8$, and $\tilde{f}(x)=\cos(\tfrac{\pi x}{2})$. In particular, the only assumption violated in this example is (a)--(c). Furthermore, e.g., with $\psi = 0$ and $h=1$, and any $g(x)=S \geq \pi^2/8$, a more detailed computation with this example shows that $f$ is even not well-defined,
	see Section~\ref{subsec:large_C_example} for details.

	\section{Application examples}
	
	\subsection{Applications to SLEs}
	\label{subsec:SLE}
	
	In SLE language, we prove in Proposition~\ref{prop:SLE martingales observables are smooth} below that \textit{all conformally covariant local martingale observables of SLE type curves are smooth}. Results of this type appear at least in~\cite{Dubedat, PW, AHPY, FLPW}; we however hope that our exposition has value as a ``black box'' with a general but simple statement, especially for further results such as~\cite{FW, HPW}. Furthermore, our result will analogously be useful in an ongoing project with more general (non-covariant) SLE observables~\cite{ongoing}.\footnote{The proof in the simpler, covariant case in~\cite{AHPY} relies on a change of variables that cleverly removes order zero terms from the PDE corresponding to covariant variables; such a trick, and an analogous proof, is generally (likely) not at hand.}

	
	\subsubsection*{\textbf{Definition of SLE}}
	
	Schramm--Loewner evolution (SLE) type curves are conformally invariant random curves \cite{Schramm-LERW_and_UST, RS-basic_properties_of_SLE}
	that are known or conjectured to describe (scaling limits of) random interfaces in many critical planar models~(e.g.~\cite{Smirnov-critical_percolation, LSW-LERW_and_UST, 
		Zhan-scaling_limits_of_planar_LERW, HK-Ising_interfaces_and_free_boundary_conditions,
		CDHKS-convergence_of_Ising_interfaces_to_SLE}).
	
	There are several SLE type models; for definiteness, we only consider here those defined in the upper half-plane $\mathbb{H}$ via the Loewner differential equation
	\begin{align}
		\label{eq:Loewner ODE}
		\partial_t g_t (z) &= \tfrac{2}{g_t(z)-X^1_t}, \qquad g_0 (z) = z,
	\end{align}
	where $X^1_\cdot: \mathbb{R}_{\geq 0} \to \mathbb{R}$ is some continuous function, and the solution to this ordinary differential equation with a given starting point $ z $ in\footnote{We will allow $z \in \overline{\mathbb{H}}$ without explicit mention whenever it is more beneficial.} $ \mathbb{H}$ is only defined up to the (possibly infinite) explosion time when $g_t(z)$ and $X^1_t$ collide. The sets $K_t$ where the solution is not defined up to time $t$ are those carved out by the initial segment of the SLE curve, while $g_t $ is\footnote{
		We use the notation $g_t(z)$ for the conformal maps in this subsection, in order to comply with conventional SLE notation. It should not be confused with the ``killing/reproduction rate'' $g(x)$ in the rest of this article.
	} a conformal map $\mathbb{H} \setminus K_t \to \mathbb{H}$. See~\cite{Lawler-SLE_book} for an introduction.
	
	The most common SLE variant, the chordal $\mathrm{SLE}(\kappa)$ with parameter $\kappa \geq 0$ from $0$ to $\infty$ in $\mathbb{H}$ is obtained by taking $X^1_t = \sqrt{\kappa} B_t$.
	With the same effort, we treat below SLE variants that also depend on $X^i_t := g_t(x_i)$ for some $x_2, \ldots, x_n \in \mathbb{R}$ and that are, up to the exit time of some domain $\Lambda \subset \mathbb{R}^n$ by $X_t$, of the form\footnote{We assume that $\Lambda$ does not allow coordinate collisions $x_j = x_1$ for any $j \neq 1$.}
	\begin{align}
		\notag
		\diff X^1_t &= \sqrt{\kappa} \diff B_t + b_1 (X_t ) \diff t \\
		\label{eq:SLE SDE}
		\diff X^i_t &= \tfrac{2 \diff t}{ X^i_t - X^1_t}, \qquad 2 \leq i \leq n,
	\end{align}
	where a smooth function $b_1: \Lambda \to \mathbb{R}$ and the launching points $(x_1, \ldots, x_n) = X_0$ are given. In this note, we will only treat SLEs through such SDEs.
	
	\subsubsection*{\textbf{Conformally covariant SLE martingale observables are smooth}}
	
	Note that the diffusion~\eqref{eq:SLE SDE} above is \textit{not} elliptic. Also, SLE type processes will in many applications \textit{not} have infinite lifetime, and the SDEs have unbounded terms. This complicates analysis via both PDE and SDE theory and highlights the use of our results. The H\"{o}rmander condition for SLE is well-known~(at least~\cite{Dubedat, AHPY, FLPW, mie2}).
	
	\begin{lemma}
		\label{lem:SLE Hormander}
		For the process $X_t$ defined by~\eqref{eq:SLE SDE}, the Lie algebra generated by the corresponding operators $\{ \mathcal{U}_i \}_{i=1}^q$ and $\{ [\mathcal{U}_i, \mathcal{U}_0] \}_{i=1}^q$ defined by~\eqref{eq:Lie generators 1}--\eqref{eq:Lie generators 2} is of dimension $n$ at every point $x \in \mathbb{R}^n$ with $x_i \neq x_j$ for all $i \neq j$.
	\end{lemma}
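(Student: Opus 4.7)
The plan relies on the very constrained form of the SLE SDE~\eqref{eq:SLE SDE}: there is a single Brownian motion ($d=1$), so $\mathcal{U}_1 = \sqrt{\kappa}\,\partial_{x_1}$ has constant coefficients. Because $\sigma$ is constant, the correction term in~\eqref{eq:Lie generators 2} vanishes, and I would first record
\[
\mathcal{U}_0 \;=\; b_1(x)\,\partial_{x_1} \;+\; \sum_{i=2}^n \frac{2}{x_i-x_1}\,\partial_{x_i}.
\]
The lack of noise in coordinates $i\geq 2$ means the whole task is to produce the ``missing'' directions $\partial_{x_2},\ldots,\partial_{x_n}$ by iterated Lie brackets inside the Lie algebra generated by $\mathcal{U}_1$ and $[\mathcal{U}_1,\mathcal{U}_0]$.

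The main step is iterated bracketing with $\mathcal{U}_1$. Set $V_0 := [\mathcal{U}_1,\mathcal{U}_0]$ (an allowed generator) and $V_k := [\mathcal{U}_1, V_{k-1}]$ for $k\geq 1$ (still in the generated Lie algebra, being repeated brackets of generators). Because $\mathcal{U}_1$ has constant coefficients, each $[\mathcal{U}_1,\,\cdot\,]$ amounts to applying $\sqrt{\kappa}\,\partial_{x_1}$ to the coefficients of the bracketed vector field; a short induction then gives
\[
V_k \;=\; \kappa^{(k+1)/2}\Bigl((\partial_{x_1}^{k+1} b_1)\,\partial_{x_1} \;+\; \sum_{i=2}^n \frac{2\,(k+1)!}{(x_i-x_1)^{k+2}}\,\partial_{x_i}\Bigr).
\]
Since $\partial_{x_1}$ is already available through $\mathcal{U}_1$, I would subtract its component from each $V_k$ and obtain, up to nonzero scalar factors, the vector fields $W_k := \sum_{i=2}^n (x_i - x_1)^{-(k+2)}\,\partial_{x_i}$ for $k=0,1,\ldots,n-2$, all inside the Lie algebra.

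Finally, I would verify linear independence of $\mathcal{U}_1, W_0, W_1, \ldots, W_{n-2}$ at any $x$ with pairwise distinct coordinates. The $\mathcal{U}_1$ term covers the $\partial_{x_1}$-direction, reducing the task to non-vanishing of the $(n-1)\times(n-1)$ determinant $\det\bigl((x_i-x_1)^{-(k+2)}\bigr)_{i=2,\ldots,n;\,k=0,\ldots,n-2}$. Setting $y_i := (x_i-x_1)^{-1}$ and factoring $y_i^2$ out of row $i$, this becomes a Vandermonde determinant in $y_2,\ldots,y_n$, hence nonzero precisely when the $y_i$ (equivalently the $x_i$) are pairwise distinct. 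There is no deep obstacle here; the only points requiring care are the vanishing of the correction term in $\mathcal{U}_0$ (ensured by constancy of $\sigma$) and the observation that iterated $\mathrm{ad}(\mathcal{U}_1)$-images of $[\mathcal{U}_1,\mathcal{U}_0]$ remain inside the generated Lie algebra.
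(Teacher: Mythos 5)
Your proof is correct. Note that the paper does not actually supply a proof of Lemma~\ref{lem:SLE Hormander}; it records the statement as well known and points to the references, where the standard argument is exactly the one you give: since $d=1$ and $\sigma$ is constant, $\mathcal{U}_1=\sqrt{\kappa}\,\partial_{x_1}$, the correction term in $\mathcal{U}_0$ vanishes, iterated $\mathrm{ad}(\mathcal{U}_1)$ applied to $[\mathcal{U}_1,\mathcal{U}_0]$ differentiates the coefficients $2(x_i-x_1)^{-1}$ in $x_1$, and the resulting $(n-1)\times(n-1)$ coefficient matrix reduces to a Vandermonde determinant in $y_i=(x_i-x_1)^{-1}$, nonzero precisely when the coordinates are pairwise distinct. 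Your induction formula for $V_k$ checks out. One pedantic remark: $W_k$ itself is not an element of the generated Lie algebra (you subtract a \emph{function} multiple of $\mathcal{U}_1$, and the Lie algebra is only an $\mathbb{R}$-linear span of iterated brackets), but since the dimension is computed from the span of the evaluations at a fixed point $x$, replacing $V_k(x)$ by $V_k(x)-c(x)\mathcal{U}_1(x)$ does not change that span, so the conclusion stands.
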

	
	\textit{Conformally covariant (time-independent) SLE observables} are of the form
	\begin{align}
		\label{eq:def of conf cov SLE observable}
		M_t := \Big( \prod_{i=2}^n (g_t'(x_i)) ^{\Delta_i} \Big) f(X_t)
	\end{align}
	for some locally bounded Borel measurable $f: \Lambda \to \mathbb{R}$, and constants $\Delta_i \in \mathbb{R}$ called the \textit{conformal weights} at $x_i$, $2 \leq i \leq n$.
	Note that~\eqref{eq:SLE SDE} gives
	\begin{align*}
		(g_t'(x_i)) ^{\Delta_i} = \exp \big(  \int_{s= 0}^t -  \tfrac{ 2 \Delta_i}{(X^i_s - X^1_s)^2} \diff s \big).
	\end{align*}
	The next result is a direct consequence of Theorem~\ref{thm:mgale observables solve PDEs} and Lemma~\ref{lem:SLE Hormander}.
	\begin{proposition}
		\label{prop:SLE martingales observables are smooth}
		If a conformally covariant SLE observable $M_t$ as in~\eqref{eq:def of conf cov SLE observable} is
		a local martingale under the process~\eqref{eq:SLE SDE}, then $f$ is a smooth solution to
		\begin{align}
			\label{eq:SLE gen PDE}
			\bigg( \frac{\kappa}{2} \frac{\partial^2}{ \partial x_1^2}  + \sum_{i=2}^n \frac{2}{x_i - x_1} \frac{\partial}{ \partial x_i} + b_1(x) \frac{\partial}{ \partial x_1} -  \sum_{i=2}^n \frac{2 \Delta_i }{(x_i - x_1)^2} \bigg) f(x)  = 0.
		\end{align}
	\end{proposition}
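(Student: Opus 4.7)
The plan is to recast the conformally covariant observable $M_t$ into the shape of Definition~\ref{def:mgale obs} and then invoke Theorem~\ref{thm:mgale observables solve PDEs} together with Lemma~\ref{lem:SLE Hormander}. First I would rewrite the conformal prefactor as an exponential integral: differentiating~\eqref{eq:Loewner ODE} in the spatial variable yields $\partial_t \log g_t'(z) = -2/(g_t(z)-X_t^1)^2$, so
\begin{align*}
\prod_{i=2}^n (g_t'(x_i))^{\Delta_i} = \exp\Big(\int_0^t g(X_s)\,\diff s\Big), \qquad g(x):=-\sum_{i=2}^n \frac{2\Delta_i}{(x_i-x_1)^2}.
\end{align*}
Note that $g$ is smooth on $\Lambda$, because $\Lambda$ excludes the coordinate collisions $x_i=x_1$.

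Next I would check that $f$, viewed as time-independent with $h\equiv 0$, qualifies as a $(g,0)$-martingale observable in the sense of Definition~\ref{def:mgale obs}. Definition~\ref{def:mgale obs} demands the local martingale property for every launching time $t_0\in[0,T)$ and every starting point $X_{t_0}=x_0\in\Lambda$, while the hypothesis of the proposition supplies it only at $t_0=0$. The bridge is the time-homogeneity of~\eqref{eq:SLE SDE} combined with the cocycle identity $g_t'(x_i) = g_{t_0}'(x_i)\cdot (g_t\circ g_{t_0}^{-1})'(X^i_{t_0})$: the $\mathcal{F}_{t_0}$-measurable factor $\prod_{i=2}^n (g_{t_0}'(x_i))^{\Delta_i}$ factors out, and what remains is exactly $\gamma_{t_0,t}\, f(X_t)$. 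Since the shifted Loewner flow $g_t\circ g_{t_0}^{-1}$ again solves~\eqref{eq:Loewner ODE} driven by $X^1_{t_0+\cdot}$ launched from $X^1_{t_0}$, the strong Markov property for~\eqref{eq:SLE SDE} promotes the assumed local martingale property at $t_0=0$ to the one required in Definition~\ref{def:mgale obs}.

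With that identification in hand, the heavy lifting is finished: $f$ is locally bounded and Borel by~\eqref{eq:def of conf cov SLE observable}, $g$ is continuous (even smooth), $h\equiv 0$ is smooth, and Lemma~\ref{lem:SLE Hormander} supplies~\eqref{eq:Hormander criterion}. Theorem~\ref{thm:mgale observables solve PDEs} and the smoothness remark following it give that $f$ is a smooth strong solution of $\mathcal{G}f+gf=0$. It only remains to read off $\mathcal{G}$ from~\eqref{eq:spatial generator} for the process~\eqref{eq:SLE SDE}: the single Brownian driver ($d=1$) gives $a_{1,1}\equiv\kappa$ and all other $a_{i,j}\equiv 0$, so
\begin{align*}
\mathcal{G} = \frac{\kappa}{2}\,\partial_{x_1}^2 + b_1(x)\,\partial_{x_1} + \sum_{i=2}^n\frac{2}{x_i-x_1}\,\partial_{x_i},
\end{align*}
and adding $g(x)f(x)$ reproduces~\eqref{eq:SLE gen PDE}. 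The only step requiring any genuine care is the launching-time bookkeeping in the middle paragraph; past that, the proposition is a direct corollary of Theorem~\ref{thm:mgale observables solve PDEs}.
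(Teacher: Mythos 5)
Your proposal is correct and follows essentially the same route as the paper, which likewise rewrites the prefactor $\prod_{i=2}^n (g_t'(x_i))^{\Delta_i}$ as $\exp\big(\int_0^t g(X_s)\,\diff s\big)$ with $g(x)=-\sum_{i=2}^n 2\Delta_i/(x_i-x_1)^2$ and then cites Theorem~\ref{thm:mgale observables solve PDEs} and Lemma~\ref{lem:SLE Hormander} directly. The launching-time bookkeeping you carry out via time-homogeneity and the cocycle identity is left implicit in the paper but is the right (and standard) justification.
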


	\begin{remark}
	SLE martingale observables depending on non-real marked points are equally important. Since the Hörmander theory is intrinsically real, their real and imaginary parts should then be treated as separate real coordinates. Technical verification of Condition~\eqref{eq:Hormander criterion} is however turned into a complex algebra question by the following observations: suppose that in our setup $X^1_t, X^2_t$ are finite-variation processes, i.e.,
	\begin{align*}
	\diff X^i_t = b_i (X_t) \diff t, \qquad i \in \{ 1, 2 \}.
	\end{align*}
obtained from a complex process $Z_t = X^1_t + i X^2_t$. That is, with $\beta (x) = b_1 (x) + i b_2 (x)$ we have originally had
\begin{align*}
 \diff Z_t = \beta (X_t) \diff t.
\end{align*}
Now, note first that due to the finite variation, in the operators $\mathcal{U}_0$ and $\mathcal{U}_q$ in Condition~\eqref{eq:Hormander criterion}, the derivatives $\partial_1, \partial_2$ only appear in $\mathcal{U}_0$ in the terms
\begin{align*}
b_1 (x) \partial_1 + b_2 (x) \partial_2 = \beta (x) \partial_z + \overline{\beta (x)} \overline{\partial_z},
\end{align*}
where $ \partial_z = \tfrac{1}{2}(\partial_1 - i \partial_2)$ and $ \overline{\partial_z} = \tfrac{1}{2}(\partial_1 + i \partial_2)$ are the usual Wirtinger derivatives. In other words, the \textit{real} operators $\mathcal{U}_0$ and $\mathcal{U}_q$ (and their Lie brackets) can be expressed in terms of complex linear combinations of the complex operators in $z$. Secondly, simple linear algebra arguments show that
\begin{align*}
& \text{The Lie algebra generated by } \{ \mathcal{U}_q \}_{q=1}^d \\
		\label{eq:Hormander criterion}
		\tag{H}
		&  \text{and } \{ [\mathcal{U}_q, \mathcal{U}_0] \}_{q=1}^d \text{ is of dimension $n$ at a given point $x \in \Lambda$.} \\
 \Leftrightarrow 
 & \partial_1, \ldots, \partial_n \text{ can all be expressed as finite real linear combinations} \\
 & \text{of $\{ \mathcal{U}_q \}_{q=1}^d$ and composed Lie bracket operations involving $\{ \mathcal{U}_r \}_{r=0}^d$ at $x$} \\
  \Leftrightarrow 
 & \partial_z , \overline{\partial_z}, \partial_1, \ldots, \partial_n \text{can all be expressed as finite \textit{complex} linear combinations} \\
 & \text{ of $\{ \mathcal{U}_q \}_{q=1}^d$ and composed Lie bracket operations involving $\{ \mathcal{U}_r \}_{r=0}^d$ at $x$.}
\end{align*}
So in a nutshell, the vector fields and the Hörmander criterion can be expressed and verified in terms of the Wirtinger derivatives $\partial_z , \overline{\partial_z}$ instead of $\partial_1 , \partial_2$.
	\end{remark}
	
	We now point out interpretations and concrete applications of this result.
	
	\subsubsection*{\textbf{PDEs of Conformal field theory}}
	
	In Conformal field theory, i.e., the representation-theoretic (conjectural) description of limits of critical models by physicists, the PDE~\eqref{eq:SLE gen PDE} (with $b_1 = 0$) is known as the Belavin--Polyakov--Zamolodchikov (BPZ) PDE (with conformal weights $\Delta_i$ at $x_i$). Concluding the converse implication to the theorem by It\^{o} calculus, we notice that \textit{the BPZ PDEs equivalently characterize SLE local martingale observables}.
	
	\subsubsection*{\textbf{Variants of SLE convergence results}}
	
	Recall that one of the main motivations behind SLE theory is its relation to lattice models.
	It often happens that one studies two families of measures $\mathbb{P}_\epsilon$ and $\mathbb{Q}_\epsilon$, indexed by lattice mesh sizes $\epsilon > 0$, where some lattice curves under $\mathbb{P}_\epsilon$ are known to converge to an SLE type process, while $\mathbb{Q}_\epsilon$ is some mild variation of $\mathbb{P}_\epsilon$, e.g. a conditioning on an event $A$. Then, by a discrete Girsanov argument, $\frac{ \diff \mathbb{Q}_\epsilon }{\diff \mathbb{P}_\epsilon} |_{\mathcal{F}_t}$ is a $\mathbb{P}_\epsilon$-martingale (e.g., if $\mathbb{Q}_\epsilon$ is the $A$-conditional measure, then $\frac{ \diff \mathbb{Q}_\epsilon }{\diff \mathbb{P}_\epsilon} |_{\mathcal{F}_t} = \frac{\mathbb{P}_\epsilon[ A \; |\; \mathcal{F}_t ]}{\mathbb{P}_\epsilon [A]}$). Now, with reasonable control of this martingale as $\epsilon \to 0$, one is sometimes able to conclude that it converges to an SLE martingale observable. By the above theorem, the limit measure of $\mathbb{Q}_\epsilon$ on $\mathcal{F}_t$ is thus obtained by weighting the limit measure of $\mathbb{P}_\epsilon$ with a smooth SLE martingale (a solution to the BPZ PDE), and with the (usual) Girsanov's theorem (as explicated below), one can actually characterize the limit of $\mathbb{Q}_\epsilon$:s as an SLE type curve. A smoothness of martingale observables result is used in such a context at least in~\cite{FW}.

	\subsubsection*{\textbf{Construction of multiple SLEs}}
	
	Consider the usual chordal SLE$(\kappa)$ measure $\mathbb{P}$ with $b_1 = 0$ in~\eqref{eq:SLE SDE}, and let us follow the evolution of $n=2N$ boundary points by the SDEs~\eqref{eq:SLE SDE}.
	The \textit{local multiple SLE} of $N$ curves between $X_0^1, \ldots, X^{2N}_0$ is another measure $\mathbb{Q}$ obtained by weighting $\mathbb{P}$ with a non-negative local\footnote{Such weighting relies on martingales and thus for the rest of this subsection, without explicated notation, all processes are only considered up to a suitable stopping time.}
	martingale observable
	$\frac{ \diff \mathbb{Q} }{\diff \mathbb{P}} |_{\mathcal{F}_t} = M_t/M_0$, where $M_t  $ is of the form~\eqref{eq:def of conf cov SLE observable}, with $n=2N$ variables,
	$\Delta_i = \frac{6-\kappa}{2 \kappa}$ for all\footnote{
		The present result may in future be of interest for other SLE variants with other values of $\Delta_i$; e.g., such have been constructed directly for $\kappa=2$ in~\cite[Appendix~D]{KLPR}. For this reason, we do not substitute the value of $\Delta_i$ into the formulas.
	}
	$2 \leq i \leq 2N$, and with $f$ satisfying the conformal covariance condition that
	\begin{align*}
		f(x ) = \big( \prod_{i=1}^{2N} \phi'(x_i)^{\Delta_i} \big) f (\phi (x_1), \ldots, \phi (x_{2N})) 
	\end{align*}
	for all conformal (Möbius) maps $\phi: \mathbb{H} \to \mathbb{H}$ that preserve the order of the marked points on the real line.
	
	In prior literature, it has been a convention to require that additionally $f$ is at least twice continuously differentiable (and hence the local-martingaleness of $M_t$ has been stated in form of the PDE~\eqref{eq:SLE gen PDE}). By Proposition~\ref{prop:SLE martingales observables are smooth}, such smoothness is however automatic, so the above definition in terms of local martingales indeed is equivalent.

	Constructions of multiple SLEs that exist in the literature go through martingale observables,\footnote{
		There exist other ways for certain special values of $\kappa$ (e.g.,~\cite{Izyurov-critical_Ising_interfaces_in_multiply_connected_domains, KKP, PW}) and for $N=2$~\cite{Wu-hypergeometric}.
	}
	and there have been several recent expositions of proving or referring to the smoothness~\cite{Dubedat, PW, Wu-hypergeometric,FLPW, Zhan,AHPY}. We hope that our general and simple statement clarifies the discussion on this result, and, in the future, on analogues for other SLE variants (cf.~\cite{HPW}).
	
	
	
	Let us illustrate in one paragraph the cruciality of such $f$ being smooth. With Girsanov's theorem, we can find the law of $X^1$ under $\mathbb{Q}$:
	\begin{align*}
		\diff X^1_t = \sqrt{\kappa} \diff \tilde{B}_t + \diff \langle X^1, L \rangle_t,
	\end{align*}
	where $\tilde{B}$ is a $\mathbb{Q}$-Brownian motion and $L$ is the stochastic logarithm of the measure-transforming local martingale $M_t/ M_0$, i.e., $L_t = \int_{s=0}^t \tfrac{ \diff M_t}{M_t}$. Now, an overwhelming majority multiple SLE results rely on $f$ furthermore being smooth, in which case It\^{o}'s theorem yields
	\begin{align*}
		\diff X^1_t = \sqrt{\kappa} \diff \tilde{B}_t + \kappa \tfrac{\partial_1 f (X_t)}{f(X_t)} \diff t,
	\end{align*}
	and still $\diff X^i_t = \tfrac{2 \diff t}{X^i_t - X^1_t}$. Some examples of such results are:
	\begin{enumerate}
		\item Classification of the convex set of multiple SLE measures: a collection such measures are convex-independent if and only if the corresponding PDE solutions $f$ are linearly independent~\cite{mie2}. This explicates a probabilistic connection to analysis of the PDE solution space, e.g.~\cite{FK-solution_space_for_a_system_of_null_state_PDEs_1, FK-solution_space_for_a_system_of_null_state_PDEs_2, FK-solution_space_for_a_system_of_null_state_PDEs_3, FK-solution_space_for_a_system_of_null_state_PDEs_4, PW, FLPW}.
		\item The limits of certain topological pairing probabilities in the several lattice models converging to SLEs were solved through an SLE argument and a detailed analysis of $f$~\cite{PW, PW18, FPW, mie2}.
		\item Construction of other SLE type models often rely on such expressions (e.g.~\cite{JJK-SLE_boundary_visits, mie3, KLPR}).
	\end{enumerate}


	
	\subsection{Some verifications of the assumptions of Theorem \ref{thm:X-harmonic FK}}
	\label{subsec:large_C_example}
	
	In the case $C \geq 0$, $h \neq 0$, it might be difficult to verify condition (c) of Theorem~\ref{thm:X-harmonic FK}. We remark the following:
	
	\begin{lemma}
		Suppose that $X$ is continuous up to and including its boundary hitting time $\tau_{\partial \Lambda} = \tau$, and that for all $s > 0$ (resp. for $s=0$) the PDE boundary value problem
		\begin{align}
			\label{eq:X-harmonic life-time problem}
			\begin{cases}
				\mathcal{G} f(x) -sf(x)  + 1 = 0, \qquad x \in \Lambda \\
				\lim_{w \to x} f(w) = 0,  \qquad x \in   \partial \Lambda,
			\end{cases}
		\end{align}
		has a bounded smooth solution $f$.
		Then $f = f_s$ is related to $\tau$ via the Laplace transform $ \mathbb{E}_x[\mathbb{I}\{\tau < \infty\} e^{-s\tau}]  = -sf_s(x) + 1$. (Resp. then $\tau$ is a.s. finite and integrable and $f(x) = \mathbb{E}_x[\tau]$, and assumption (c) of Theorem \ref{thm:X-harmonic FK} holds for $C=0$ and $\alpha = 1$.)
	\end{lemma}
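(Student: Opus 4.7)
My approach is to apply It\^o's formula to a suitable transformation of $f(X_t)$ so that the PDE collapses the drift into a deterministic source term, then apply optional sampling at the bounded stopping time $\tau\wedge T$, and finally pass to the limit $T\to\infty$.

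For $s>0$, by It\^o and the PDE $\mathcal{G}f-sf=-1$,
\[
\diff\bigl(e^{-st}f(X_t)\bigr) = e^{-st}(\mathcal{G}f - sf)(X_t)\,\diff t + e^{-st}\nabla f(X_t)^\top \sigma(X_t)\,\diff B_t = -e^{-st}\,\diff t + \diff \tilde{M}_t,
\]
so $M_t := e^{-st}f(X_t) + \int_0^t e^{-su}\,\diff u$ is a local martingale on $[0,\tau)$. Since $|f|$ is bounded and $\int_0^{t\wedge\tau} e^{-su}\,\diff u \leq 1/s$, the stopped process $M_{\cdot\wedge\tau}$ is uniformly bounded, hence a genuine martingale, and optional sampling at $\tau\wedge T$ yields
\[
f(x) = \mathbb{E}_x\bigl[e^{-s(\tau\wedge T)} f(X_{\tau\wedge T})\bigr] + \frac{1 - \mathbb{E}_x[e^{-s(\tau\wedge T)}]}{s}.
\]
Letting $T\to\infty$: on $\{\tau<\infty\}$, continuity of $X$ up to and including $\tau$ together with $f|_{\partial\Lambda}=0$ forces $f(X_{\tau\wedge T})\to 0$; on $\{\tau=\infty\}$, $e^{-sT}\to 0$ against a bounded $f$. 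Dominated convergence (with bounds $\sup|f|$ and $1/s$) then delivers the claimed identity $\mathbb{E}_x[\mathbb{I}\{\tau<\infty\}e^{-s\tau}] = 1 - s f_s(x)$.

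For $s=0$, the PDE becomes $\mathcal{G}f=-1$ and It\^o gives that $f(X_t)+t$ is a local martingale; the stopped process is again bounded on $[0,\tau\wedge T]$, so optional sampling yields $f(x)=\mathbb{E}_x[f(X_{\tau\wedge T})] + \mathbb{E}_x[\tau\wedge T]$. Boundedness of $f$ and monotone convergence force $\mathbb{E}_x[\tau]\leq f(x)+\sup|f|<\infty$, so $\tau$ is a.s. finite and integrable; dominated convergence as $T\to\infty$ (using $f(X_\tau)=0$ by boundary continuity) then gives $f(x)=\mathbb{E}_x[\tau]$. Condition (c) of Theorem \ref{thm:X-harmonic FK} with $C=0,\alpha=1$ then follows immediately: $\mathbb{E}_w[\tau\,e^{0\cdot\tau}]=f(w)$ is bounded in $w$, and $\mathbb{E}_w[e^{0\cdot\tau}]=1$ trivially.

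The only delicate point is the limit $T\to\infty$ in the case $s>0$ when $\{\tau=\infty\}$ may carry positive mass: one must verify $e^{-s(\tau\wedge T)}f(X_{\tau\wedge T})\to 0$ separately on $\{\tau<\infty\}$ (boundary continuity along the continuous trajectory ending at $\partial\Lambda$) and on $\{\tau=\infty\}$ (exponential decay of $e^{-sT}$ against a bounded $f$), and analogously justify passing to the limit in the second expectation. Once the local-martingale structure is identified and bounded on compact time intervals, the remainder is routine.
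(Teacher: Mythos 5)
Your proof is correct and follows essentially the same route as the paper: for $s>0$ the paper uses exactly the martingale $e^{-st}f(X_t)+\tfrac{1-e^{-st}}{s}$ (which is your $M_t$), and for $s=0$ it uses $f(X_t)+t$ with optional sampling at $\tau\wedge T$, the bound $\mathbb{E}_x[\tau\wedge T]\leq f(x)+\sup|f|$, monotone convergence, and then dominated convergence. The only difference is that you spell out the $T\to\infty$ limit on $\{\tau=\infty\}$ versus $\{\tau<\infty\}$ more explicitly, which the paper leaves implicit.
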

	
	For a proof, in the case $s=0$, observe (with straightforward It\^{o} calculus) that $f(X_t)+t$ is a martingale, and hence $f(x) = \mathbb{E}_x [f(X_{\tau \wedge T}) + \tau \wedge T] \geq -\sup |f| + \mathbb{E}_x [\tau \wedge T]$. Due to this bound on $\mathbb{E}_x [\tau \wedge T]$, by Monotone convergence as $T \to \infty$, $\tau$ is finite and integrable. Dominated convergence then gives $f(x)=\mathbb{E}_x [ \tau ]$. The case $s>0$ is similar (but even simpler); the martingale to look at is $ e^{-st}f(X_t) + \tfrac{1-e^{-st}}{s}$.

	To sketch an explicit example, let $X$ be a one-dimensional Brownian motion and $\Lambda = [-1, 1]$.  Then one gets $f_s(x)=\tfrac{1}{s}(1-\tfrac{\cosh(x\sqrt{2s})}{\cosh(\sqrt{2s})} )$, and  by Monotone convergence, $\mathbb{E}_x[ \tau^k ] = (-1)^k \lim_{s \downarrow 0} \partial_s^k \mathbb{E}_x[ e^{-s\tau}]$ as an equality in $\mathbb{R} \cup \{ + \infty \}$. Thus, we have
	\begin{align*}
		\mathbb{E}_{x=0} [\tau^k] &= (-1)^k k! 2^k a_{2k}, \quad \text{where} \quad
		1/\cosh(x) = \sum_{k=0}^\infty a_{2k} x^{2k},
	\end{align*}
	and we obtain $\mathbb{E}_0 [e^{C \tau}] = 1/\cosh(i\sqrt{2C}) = 1/\cos(\sqrt{2C})$ for $i\sqrt{2C}$ within convergence disc of the above power series, i.e., $C < \pi^2/8$, and similarly $\mathbb{E}_0 [e^{C \tau}] = + \infty$ for $C \geq \pi^2/8$. In particular, criterion (c) of Theorem~\ref{thm:X-harmonic FK} is readily concluded for $C < \pi^2/8$, while, as observed in the Introduction, the theorem is not true for $C \geq \pi^2/8$.

	\subsection{PDE connections for processes with finite life-time}
	
	In this subsection, we consider processes $X$ killed upon the hitting time $\tau_{\partial \Lambda} = \sup \{ t \geq 0 : \; X_t \in \Lambda \}$. Note that the coefficients of the SDE~\eqref{eq:main process of interest} are allowed to blow up at $\partial \Lambda$, and $X_{\tau_{\partial \Lambda}}$ need not even be well-defined.\footnote{
		Formally, we can for instance add a ``cemetery point'' $\dagger$ and set $X_t = \dagger$ for $t \geq \tau_{\partial \Lambda}$.
	} The following weaker boundary regularity will turn out to be crucial:
	
	\begin{definition}
		We say that $x \in \partial \Lambda$ is \textit{weakly $X$-regular}, if for any $\delta > 0$,
		\begin{align*}
			\mathbb{P}_w [ \tau_{\partial \Lambda} < \delta ] \to 1 \qquad \text{as } w \to x.
		\end{align*}
		We say that $\Lambda$ has \textit{weakly $X$-regular boundary} if every boundary point is weakly $X$-regular.
	\end{definition}
	
	\subsubsection*{\textbf{The survival of $X$}}
	
	\begin{proposition}
		\label{prop:survival proba}
		Fix $T > 0$ and let $f: \Lambda \times [0,T] $ be the survival probability
		\begin{align*}
			f(x, t) := \mathbb{P}_{x, t} [\tau_{\partial \Lambda} > T].
		\end{align*}
		If $\Lambda$ has weakly $X$-regular boundary and
		condition~\eqref{eq:Hormander criterion} is satisfied for $X$, then $f$ is a smooth solution to the boundary value problem
		\begin{align}
			\label{eq:survival proba PDE}
			\begin{cases}
				\mathcal{G} f(x, t)  + \partial_t f(x, t) = 0, \qquad (x, t) \in \Lambda \times (0, T) \\
				\lim_{(w, s ) \to (x, t)} f(w, s) = \mathbb{I} \{t=T\}, \qquad (x, t) \in \mathcal{C}.
			\end{cases}
		\end{align}
	\end{proposition}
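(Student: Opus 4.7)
The plan is to recognize $f$ as a bounded martingale observable of $X$ with vanishing $g$ and $h$, apply Theorem~\ref{thm:mgale observables solve PDEs} to extract smoothness and the PDE in the interior of the cylinder, and then verify the boundary condition using weak $X$-regularity on the lateral face and path continuity on the terminal slice.

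I would first verify that $f(x,t) = \mathbb{P}_{x,t}[\tau_{\partial\Lambda} > T]$ is Borel measurable and bounded by $1$. By the strong Markov property and time-homogeneity of $X$, for every launching point $(x_0, t_0) \in \Lambda \times [0,T)$ the process
\begin{equation*}
M_t \;:=\; f(X_t, t) \;=\; \mathbb{E}\bigl[\, \mathbb{I}\{\tau_{\partial\Lambda} > T\} \,\bigl|\, \mathcal{F}_t\, \bigr], \qquad t \in [t_0, T),
\end{equation*}
is a uniformly bounded (hence true) martingale up to the exit time of $\Lambda \times [0,T)$ by $(X_t, t)$. Thus $f$ is a $(0,0)$-martingale observable in the sense of Definition~\ref{def:mgale obs}. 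Theorem~\ref{thm:mgale observables solve PDEs}, combined with the fact that $g = h = 0$ are smooth and that hypoellipticity of $\mathcal{G} + \partial_t$ is guaranteed by the H\"ormander condition (cf.~Remark~\ref{rem:different Lie algebras}), then yields that $f$ is a smooth strong solution of $\mathcal{G} f + \partial_t f = 0$ on $\Lambda \times (0, T)$.

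For the boundary limits, time-homogeneity lets me write $f(w, s) = \mathbb{P}_w[\tau_{\partial\Lambda} > T - s]$. If $(x, t) \in \partial\Lambda \times (0, T)$, I would pick $\delta \in (0, T-t)$ and note that for $s$ close enough to $t$ so that $T - s > \delta$,
\begin{equation*}
f(w, s) \;\leq\; \mathbb{P}_w[\tau_{\partial\Lambda} > \delta] \;=\; 1 - \mathbb{P}_w[\tau_{\partial\Lambda} < \delta] \;\longrightarrow\; 0 \quad \text{as } w \to x,
\end{equation*}
by weak $X$-regularity at $x$. If $(x, T) \in \Lambda \times \{T\}$, I would pick a small ball $\overline{B(x, r)} \subset \Lambda$ on which the smooth SDE coefficients are bounded; a standard short-time estimate (Chebyshev applied to $\sup_{u \leq T-s}|X_u - w|$) then yields $\mathbb{P}_w[\tau_{\partial\Lambda} > T - s] \geq 1 - C(T - s)$ uniformly in $w \in B(x, r/2)$, hence tending to $1$ as $(w, s) \to (x, T)$.

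The only real subtlety I foresee is the corner $\partial\Lambda \times \{T\}$, where the prescribed terminal data $\mathbb{I}\{t=T\}$ is genuinely discontinuous: approaching along the lateral face forces the limit $0$ while approaching along the terminal slice forces $1$, so no bona-fide two-sided limit can match the value $1$ at corner points. This is the classical discontinuity inherent in indicator-type terminal conditions; the boundary equation in the proposition is thus to be read pointwise on $\mathcal{C}$ away from this lower-dimensional corner set (alternatively, in the approximation sense sketched in Remark~1 following Theorem~\ref{thm:Feynman--Kac}).
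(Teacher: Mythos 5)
Your proof is correct and follows essentially the same route as the paper's, which is only a one-sentence sketch (``$f(X_t,t)$ is manifestly a martingale, apply Theorem~\ref{thm:mgale observables solve PDEs}, boundary values by weak regularity''); you simply supply the details of the martingale property, the lateral boundary limit via weak $X$-regularity, and the terminal limit via a short-time estimate. Your observation about the corner set $\partial\Lambda\times\{T\}$ --- where the lateral approach forces the limit $0$ while the prescribed value is $1$, so the stated two-sided limit cannot hold there --- is a correct refinement of a point the paper glosses over, and your reading of the boundary condition away from that lower-dimensional set is the right fix.
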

	
	The proof is almost trivial: $f(X_t, t)$ is manifestly a martingale, and by Theorem~\ref{thm:mgale observables solve PDEs}, $f$ is a smooth solution to the above PDE. The boundary values hold by weak boundary regularity.
	
	\begin{corollary}
		Let $f$ be as above, assume $f(w, t) > 0$ for all $(w, t) \in \Lambda \times [0,T]$, and let $\mathbb{Q} $ be the probability measure $\mathbb{Q} [\, \cdot \,] := \mathbb{P} [\, \cdot \, |\, \tau_{\partial \Lambda} > T ] $.
		Then,
		\begin{align}
			\label{eq:conditional SDE}
			\diff X_t = \sigma (X_t) \diff \tilde{B}_t + a (X_t)  \tfrac{\nabla f (X_t, t)}{ f (X_t, t) } \diff t + b (X_t)  \diff t,
		\end{align}
		where we identified vectors as column matrices and $\tilde{B}$ is a $d$-dimensional $\mathbb{Q}$-Brownian motion.
	\end{corollary}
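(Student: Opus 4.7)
The plan is a Doob $h$-transform argument, using the survival probability $f$ itself as the harmonic function. Proposition~\ref{prop:survival proba} already supplies the key ingredient: $f$ is smooth on $\Lambda\times(0,T)$ and solves $(\mathcal{G}+\partial_t)f = 0$ there with boundary values $\mathbb I\{t=T\}$ on $\mathcal C$.

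First I would apply It\^o's formula to $f(X_t,t)$ up to time $\tau_{\partial\Lambda}\wedge T$. Because $f$ satisfies the PDE, the drift and time-derivative terms cancel and one is left with
\[
 df(X_t, t) \;=\; \nabla f(X_t, t)^{\top}\,\sigma(X_t)\,dB_t.
\]
Since $f$ takes values in $[0,1]$, this bounded local martingale is a true martingale, which I denote $M_t:=f(X_t,t)$. Next I would identify $M_t/M_0$ as the Radon--Nikodym derivative of $\mathbb Q$ with respect to $\mathbb P$ on $\mathcal F_t$ for $t<T$. For $A\in\mathcal F_t$, the strong Markov property together with the boundary condition $f\to 0$ on $\partial\Lambda\times(0,T)$ gives $\mathbb P[A\cap\{\tau_{\partial\Lambda}>T\}]=\mathbb E[\mathbf 1_A\,f(X_t,t)]$, whence
\[
 \frac{d\mathbb Q}{d\mathbb P}\bigg|_{\mathcal F_t} \;=\; \frac{f(X_t,t)}{f(X_0,0)},
\]
which is well defined by the positivity hypothesis $f(w,t)>0$.

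Then I would apply Girsanov's theorem to this exponential martingale. Its stochastic logarithm is
\[
 L_t \;=\; \int_0^t \frac{\nabla f(X_s,s)^{\top}\,\sigma(X_s)}{f(X_s,s)}\,dB_s,
\]
and Girsanov yields that
\[
 \widetilde B^{j}_t \;:=\; B^{j}_t \;-\; \sum_{i=1}^{n}\int_0^t \sigma_{i,j}(X_s)\,\frac{\partial_i f(X_s,s)}{f(X_s,s)}\,ds
\]
is a $\mathbb Q$-Brownian motion. Substituting $dB = d\widetilde B+(\cdots)\,dt$ into~\eqref{eq:main process of interest} and collecting terms via $\sum_{j}\sigma_{i,j}\sigma_{k,j}=a_{i,k}$ gives exactly~\eqref{eq:conditional SDE}.

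The only real technical point is the usual one for Girsanov in the presence of possibly unbounded SDE coefficients and a finite life-time. I would handle it by localization: on the stopping times $\tau_k:=\inf\{t\ge 0:\,f(X_t,t)\le 1/k\}\wedge(T-1/k)$ the integrand of $L$ is bounded, so Girsanov applies on $[0,\tau_k]$ and transfers the SDE there. Taking $k\to\infty$, the limit stopping time dominates $\tau_{\partial\Lambda}\wedge T$; since $\mathbb Q[\tau_{\partial\Lambda}>T]=1$ by definition, the resulting SDE~\eqref{eq:conditional SDE} then holds on $[0,T)$ under $\mathbb Q$. I expect this localization bookkeeping to be the main source of minor friction, but nothing deeper.
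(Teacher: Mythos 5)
Your proposal is correct and follows essentially the same route as the paper: the paper's (one-line) proof likewise identifies $\tfrac{\diff \mathbb{Q}}{\diff \mathbb{P}}\big|_{\mathcal{F}_t} = f(X_t,t)/f(X_0,0)$ and then applies Girsanov's theorem together with It\^{o} calculus on the smooth $f$ to transform the Brownian motions. Your added localization via $\tau_k$ is sensible extra bookkeeping that the paper leaves implicit.
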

	
	Also this proof is trivial; we use $\tfrac{\diff \mathbb{Q}}{ \diff \mathbb{P}}|_{\mathcal{F}_t} = f(X_t, t)/f(X_0, 0)$ and Girsanov's theorem and basic It\^{o} calculus to transform the Brownian motions. Note that the smoothness of $f$ from Theorem~\ref{thm:mgale observables solve PDEs} is crucial.
	
	An interesting question is whether the SDE~\eqref{eq:conditional SDE} turns time-independent for $t \in [0,1]$ as $T \to \infty$; this would be the process $X$ conditioned to stay inside $\Lambda$ forever. If so happens, then $f(x, t)$ must separate asymptotically, $f(x, t) \approx \alpha(x) \beta(t)$. Assuming that a solution of the form $f(x, t) = \alpha(x) \beta(t)$ to the PDE in~\eqref{eq:survival proba PDE} exists, one then readily shows that $\beta (t) = e^{st}$ for some $s \geq 0$ and $\mathcal{G} \alpha + s \alpha = 0$. The smallest $s \geq 0$ then dominates. Perhaps one could even have a series expansion for $f$ in terms of such $\alpha_i (x) e^{s_i t}$ with different eigenfunctions $\alpha_i$ corresponding to eigenvalues $s_i$ (as in a Fourier series solution of the heat eqaution). This also sheds light on the previous example.

	\subsubsection*{\textbf{Transition probabilities}}
	
	We now point out a proof of Kolmogorov's PDEs with minimal assumptions. Considering the extent of literature on the topic, it is not impossible that the result would follow from (a mild generalization of) some earlier exposition. 
	We are however not aware of such a reference, the crucial point being that we avoid the technical difficulties that would arise when defining a dual Markov semigroup for an irregular diffusion, as in the usual proof of the backward equation (e.g.,~\cite{Stroock-PDE}).
	
	\begin{proposition}
		\label{prop:killed}
		Suppose that 
		condition~\eqref{eq:Hormander criterion} is satisfied for $X$.
		Then for each fixed $t > 0$, $X_t$, with $X_0 = w \in \Lambda$, allows a density function\footnote{
			This density generates a sub-probability measure, whose total mass $\int \rho_w(x,t) \diff^n x$ equals the survival probability $f(w, T- t) \leq 1$ of the Proposition~\ref{prop:survival proba} (for auxiliary $T \geq t$).
		} $\rho_w(x,t)$ with respect to the Lebesgue measure on $\Lambda$,
		and $\rho_\cdot (\cdot, \cdot)$ is a smooth function on $\Lambda \times \Lambda \times (0, \infty)$, that solves 
		\begin{align}
			\label{eq:particle density BVP}
			\partial_t \rho_w (x, t) = \mathcal{G}^*\rho_w(x, t) 
			\qquad \text{and} \qquad
			\partial_t \rho_w(x, t) = \mathcal{G}_w \rho_w(x, t).
		\end{align}
	\end{proposition}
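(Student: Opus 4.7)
The strategy is to derive both Kolmogorov equations by combining It\^o's formula with Theorem~\ref{thm:mgale observables solve PDEs}, taking the existence of a sub-probability density $\rho_w(x,t)$ and its smoothness in $(x,t)$ for each fixed $w$ as supplied by Theorem~\ref{thm:existence of a smooth density}. For the forward equation $\partial_t \rho_w = \mathcal{G}^* \rho_w$, fix $\varphi \in C_c^\infty(\Lambda)$ and a compact exhaustion $\Lambda = \bigcup_m K_m$ with $\mathrm{supp}(\varphi) \Subset K_1$. Apply It\^o's formula to $\varphi(X_{t \wedge \tau_{K_m}})$; since $\mathcal{G}\varphi$ is bounded on $K_m$, the local martingale part has zero mean, and letting $m \to \infty$ with dominated convergence (using that $\varphi$ vanishes on $\partial K_m$ and at the cemetery point past $\tau_{\partial \Lambda}$) yields
\begin{align*}
\int_\Lambda \rho_w(x,t)\varphi(x)\diff^n x = \varphi(w) + \int_0^t \int_\Lambda \rho_w(x,s) \mathcal{G}\varphi(x)\diff^n x \diff s.
\end{align*}
Differentiating in $t$ and integrating by parts in $x$ (valid by the compact support of $\varphi$) then gives $\int_\Lambda [\partial_t \rho_w - \mathcal{G}^* \rho_w] \varphi \diff^n x = 0$ for every test function $\varphi$, which forces the forward PDE pointwise.

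For the backward equation $\partial_t \rho_w = \mathcal{G}_w \rho_w$ and smoothness in $w$, fix $T>0$ and $\varphi \in C_c^\infty(\Lambda)$, and define $f(w,s) := \mathbb{E}_{w,s}[\varphi(X_T)]$ on $\Lambda \times [0,T)$ (with $\varphi$ extended by zero at the cemetery point). By the Markov property, $f(X_s, s)$ is a bounded martingale, so $f$ is a $(0,0)$-martingale observable. Theorem~\ref{thm:mgale observables solve PDEs} then yields that $f$ is smooth on $\Lambda \times (0,T)$ and satisfies $\mathcal{G}_w f + \partial_s f = 0$. Time-homogeneity rewrites this as $f(w,s) = \int_\Lambda \rho_w(x,T-s)\varphi(x)\diff^n x$, so this pairing is smooth in $(w,s)$ for every $\varphi$. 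Together with the $(x,t)$-smoothness of $\rho_w$, hypoellipticity of the backward operator $\partial_s + \mathcal{G}_w$ (itself a consequence of condition~\eqref{eq:Hormander criterion}) upgrades this to joint smoothness of $\rho_w(x,t)$ on $\Lambda \times \Lambda \times (0, \infty)$. Differentiating the pairing under the integral and comparing against $\mathcal{G}_w f + \partial_s f = 0$ then yields $\int_\Lambda [\mathcal{G}_w \rho_w - \partial_t \rho_w]\varphi \diff^n x = 0$ for all $\varphi$, delivering the backward PDE.

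The main technical obstacle is twofold. First, the It\^o step must accommodate the possibly finite life-time of $X$ and the unboundedness of the SDE coefficients; the compact exhaustion $K_m$ combined with $\mathrm{supp}(\varphi) \Subset \Lambda$ localizes everything and makes the passage $m \to \infty$ routine via dominated convergence. Second, and more subtle, is the upgrade from pointwise-in-$(w,s)$ smoothness of the pairing $w \mapsto \int \rho_w(x,T-s)\varphi(x)\diff^n x$ (available for every test function $\varphi$) to joint smoothness of the kernel $\rho_w(x,t)$ itself in the variable $w$; this is exactly a hypoelliptic regularity statement for the backward Kolmogorov operator in $w$, and is where condition~\eqref{eq:Hormander criterion} is re-used beyond what Theorem~\ref{thm:existence of a smooth density} already provides.
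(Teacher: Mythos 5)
Your two building blocks (It\^o's formula against a compactly supported test function for the forward equation; the martingale observable $\tilde f(x,s)=\mathbb{E}_{x,s}[\varphi(X_T)]$ plus Theorem~\ref{thm:mgale observables solve PDEs} for the backward equation) are exactly the ones the paper uses, but there is a genuine gap at the very first step: you take the existence of the density $\rho_w(\cdot,t)$ and its smoothness in $(x,t)$ from Theorem~\ref{thm:existence of a smooth density}. That theorem assumes that the coefficients $\sigma_{i,q}, b_i$ extend smoothly to all of $\mathbb{R}^n$ with bounded derivatives of all orders and that $X$ a.s.\ stays in $\Lambda$ forever. Neither hypothesis is available in Proposition~\ref{prop:killed}: this subsection is precisely about killed processes with possibly finite life-time and coefficients that may blow up at $\partial\Lambda$, and circumventing exactly this difficulty is the stated point of the proposition. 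The existence of the density therefore cannot be an input; it has to be an output. The paper's fix is to package everything into a single distribution $\mathfrak{u}(\varphi)=\int_{w\in\Lambda}\int_{0}^{T}\mathbb{E}_w[\varphi(w,X_t,t)\mathbb{I}\{X_t\in\Lambda\}]\,\diff t\,\diff^n w$ on $\Lambda\times\Lambda\times(0,T)$ --- which is always well defined --- prove that it solves both Kolmogorov equations distributionally (by essentially your two computations), and only then produce the kernel.

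Second, your upgrade to joint smoothness is not justified as written. Smoothness of $(x,t)\mapsto\rho_w(x,t)$ for each fixed $w$, together with smoothness in $(w,s)$ of every pairing $\int\rho_w(x,T-s)\varphi(x)\,\diff^n x$, does not imply joint smoothness of the kernel (separate smoothness in groups of variables does not give joint smoothness), and hypoellipticity of $\partial_s+\mathcal{G}_w$ only concerns distributions in the $n+1$ variables $(w,s)$, not the full set of variables. What is needed --- and what the paper uses --- is hypoellipticity of the combined operator $\mathcal{G}_x+\mathcal{G}^*_w-2\partial_t$ on $\mathcal{C}^\infty_c(\Lambda\times\Lambda\times(0,\infty))$ (recorded in Section~\ref{subsec:PDE background} as a consequence of condition~\eqref{eq:Hormander criterion}), applied to the single distribution $\mathfrak{u}$ that solves both PDEs weakly. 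That one application of H\"ormander's theorem delivers the existence of the kernel and its joint smoothness in all $2n+1$ variables simultaneously, repairing both gaps at once; the rest of your argument then goes through.
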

	\begin{remark}
		One is also naturally interested in the boundary values associated with the above PDEs. However, even in the one-dimensional case, this is a highly non-trivial matter (e.g., \cite[Chapter II.1.6]{Borodin-Salminen}).
	\end{remark}
	
	\begin{proof}[Proof of Proposition~\ref{prop:killed}]
		The PDEs and smoothness are proven simultaneously via H\"{o}rmander theory: in the language of Section~\ref{subsec:PDE background}, we will show that the distribution $\mathfrak{u}$ on compactly-supported smooth functions $\varphi: \Lambda \times \Lambda \times [0, T] \to \mathbb{R}$ given by
		\begin{align*}
			\mathfrak{u}(\varphi) := \int_{w \in \Lambda}\int_{t=0}^T \mathbb{E}_w [\varphi(w, X_t, t) \mathbb{I}\{ X_t \in \Lambda \}] \diff t \diff^n w
		\end{align*}
		is a distributional solution to the PDEs~\eqref{eq:particle density BVP}.
		We may assume below that $\varphi(w, x, t)$ is a probability density function.
		
		Then, for the forward PDE, write  $\varphi(w, x, t) = \chi(w) \psi_w (x, t)$ in terms of a marginal and conditional density $\chi$ and $\psi_w$, and interpret $\psi_w(\dagger, t) := 0$ below. By compact supports, It\^{o}'s formula and Fubini,
		\begin{align*}
			0 &= \mathbb{E}_w [\psi_w(X_T, T) - \psi_w (X_0, 0) ] 
			\\
			&= \int_{t=0}^T\mathbb{E}_w \Big[ \big( \mathcal{G} \psi_w (X_t, t) + \partial_t \psi_w (X_t, t) \big) \mathbb{I}\{  X_t \in \Lambda  \} \Big] \diff t,
		\end{align*}
		from which one readily obtains distributional forward PDE $\mathfrak{u}((\mathcal{G} + \partial_t)\varphi) = 0$.
		
		For the backward PDE, suppose that $\varphi(w, x, t)$ is of the form $\chi(w, t)\psi(x)$. Not all density functions are of this form, but with linear combinations of such functions with the different $\psi$:s being a partition of unity in terms of smooth bump functions, any $\varphi$ and its $w$ and $t$ derivatives up to order two can be approximated to an arbitrary precision (e.g., in the sup norm).
		Hence, it is still sufficient to consider such form of the density $\varphi$.
		Now, set $\psi(\dagger) := 0$ and $\tilde{f}(x, t) := \mathbb{E}_{x, t}[\psi (X_T)] = \mathbb{E}_{x}[\psi (X_{T-t})].$
		By definition, $\tilde{f}(X_t, t)$ is a martingale and by Theorem~\ref{thm:mgale observables solve PDEs} a smooth solution to the PDE $\mathcal{G} \tilde{f} + \partial_t \tilde{f} = 0$. Integrating $(\mathcal{G} \tilde{f} + \partial_t \tilde{f})$ against the test function $\chi (w, T-t)$, we obtain
		\begin{align*}
			0
			&= 
			\int_{w \in \Lambda} \int_{t=0}^T   \big((\mathcal{G}^*_w -\partial_t) {\chi} (w, T- t) \big) \mathbb{E}_{w}[\psi(X_{T-t})] \diff^n w \diff t \\
			&= \int_{w \in \Lambda} \int_{t=0}^T \mathbb{E}_{w} \big[ \psi(X_{t}) (\mathcal{G}^*_w + \partial_t) {\chi} (w, t) \big] \diff^n w \diff t,
		\end{align*}
		which is the distributional formulation $\mathfrak{u}((\mathcal{G}_w^* + \partial_t)\varphi) = 0$ of the backward PDE. By H\"{o}rmander's theorem (Theorem~\ref{thm:Hormander original}; see also Remark~\ref{rem:different Lie algebras}), $\mathfrak{u}$ is then represented by a smooth integration kernel $\rho_w(x, t)$, which solves the forward and backward PDEs; it is readily argued to be the particle density.
		
	\end{proof}
	
	\begin{remark}
		With an analogous proof, one can show that the distribution
		\begin{align}
			\label{eq:def of distribution v}
			\mathfrak{v}(\varphi) := \int_{w \in \Lambda}\int_{t=0}^T \mathbb{E}_w [\gamma_t \varphi(w, X_t, t) \mathbb{I}\{ X_t \in \Lambda \}] \diff t \diff^n w,
		\end{align}
		if the integrals are well-defined,
		solves under criterion~\eqref{eq:Hormander criterion}, the PDEs 
		\begin{align}
			\label{eq:distributional PDE with order zero terms}
			( \mathcal{G}^* + g(x) - \partial_t ) \mathfrak{v} = 0 \qquad \text{and} \qquad
			( \mathcal{G}_w + g(w) - \partial_t ) \mathfrak{v} = 0.
		\end{align}
	\end{remark}
	
	\section{Proofs}
	\label{sec:proofs}
	\subsection{Preliminaries}
	\label{subsec:PDE background}
	
	This subsection reviews well-known results.
	
	\subsubsection*{\textbf{Hörmander's criterion}}
	
	Let $\mathcal{C}^\infty_c(\Lambda)$ denote the set of compactly-supported smooth functions $\Lambda \to \mathbb{R}$. A \textit{distribution} is a continuous linear map $\mathcal{C}^\infty_c(\Lambda) \to \mathbb{R}$ (the topology on $\mathcal{C}^\infty_c(\Lambda )$ can be found in, e.g.,~\cite{Rudin-FA}). Given a smooth linear differential operator on $\Lambda$,
	\begin{align*}
		\mathcal{L} = \sum_{\alpha \in A} c_\alpha(x) D^\alpha,
	\end{align*}
	where the sum ranges over finite subset $A$ of multi-indices $\alpha$, $c_\alpha: \Lambda \to \mathbb{R}$ are smooth and $D^\alpha$ denotes the corresponding derivative, define the dual operator as
	\begin{align*}
		(\mathcal{L}^* f)(x) = \sum_{\alpha \in A} (-1)^\alpha D^\alpha \big( c_\alpha(x) f(x) \big).
	\end{align*}
	Now, a distribution $\mathfrak{u} \in \mathcal{C}^\infty_c(\Lambda)^*$ is said to be a \textit{distributional solution} to the PDE $\mathcal{L}\mathfrak{u} = h $, where $h: \Lambda \to \mathbb{R}$ is a given, sufficiently regular (e.g.,  $L^2_{\mathrm{loc}}$) function, if $ \mathfrak{u} (\mathcal{L}^* \varphi ) = \int_{x \in \Lambda} h(x) \varphi (x) \diff^n x $ for all $\varphi \in \mathcal{C}^\infty_c(\Lambda)$. Finally, the operator $\mathcal{L}$ is said to be \textit{hypoelliptic} if any distributional solution to $\mathcal{L} \mathfrak{u} = h$, for any smooth $h: \Lambda \to \mathbb{R}$, is of the form
	\begin{align*}
		\mathfrak{u} (\varphi) = \int_{x \in \Lambda} u(x) \varphi(x) \diff^n x \qquad \text{for all } \varphi \in \mathcal{C}^\infty_c(\Lambda)
	\end{align*}
	for some (in that case unique) smooth function $u: \Lambda \to \mathbb{R}$. It immediately follows that $u$ is a strong solution to the PDE  $\mathcal{L} u = h$. Hence, in a nutshell, hypoellipticity means ``any distributional solution is a smooth strong solution''. As a particular example, all uniformly elliptic operators arising from second order PDEs are also hypoelliptic. 
	
	We now present Hörmander's sufficient criterion for hypoellipticity, in the generality of PDEs on open subsets $\Lambda \subset \mathbb{R}^n$; see, e.g.,~\cite[Section~7.4]{Stroock-PDE}.
	
	Let $\mathcal{L}$ be a second-order differential operator on $\Lambda \subset \mathbb{R}^n$, of the form
	\begin{align}
		\label{eq:Hormander's 2nd order operator}
		\mathcal{L} = \tfrac{1}{2} \sum_{q=1}^d \mathcal{V}^2_q + \mathcal{V}_0 + g(x),
	\end{align}
	where $g: \Lambda \to \mathbb{R}$ is smooth, $\mathcal{V}_q$ for $0 \leq q \leq d$ are smooth vector fields (i.e., first-order smooth differential operators on functions $\Lambda \to \mathbb{R}$, with no order zero term), and $\mathcal{V}^2_q$ denotes composition of such operators, i.e., $(\mathcal{V}^2_q f )(x) = \mathcal{V}_q(\mathcal{V}_q f (x))$.
	
	The Lie bracket $[\cdot, \cdot]$ of smooth vector fields is defined in the usual manner and provides another smooth vector field. The Lie algebra generated by some vector fields at a given point $x \in \Lambda$ is the span of those operators and any other operators obtained by iterated Lie brackets from the original ones. The dimension of this Lie algebra at a given coordinate $x$ is obtained by interpreting its elements as vectors in $\mathbb{R}^n$ by picking the coefficients (evaluated at $x$) of the derivatives $\partial_1, \ldots, \partial_n$ in each operator. Hörmander's famous theorem can now be stated as follows.
	
	\begin{theorem}
		\label{thm:Hormander original}
		If the dimension of the Lie algebra generated by $\mathcal{V}_0, \ldots, \mathcal{V}_d$ is $n$ in every point $x \in \Lambda$, then the linear operator $\mathcal{L}$ in~\eqref{eq:Hormander's 2nd order operator} is hypoelliptic.
	\end{theorem}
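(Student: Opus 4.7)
The plan is to prove hypoellipticity via a \emph{subelliptic estimate}: for every relatively compact open $\Lambda' \Subset \Lambda$, there exist $\epsilon > 0$ and $C < \infty$ such that
\begin{align*}
\|u\|_{H^{\epsilon}} \;\leq\; C \big( \|\mathcal{L} u\|_{L^2} + \|u\|_{L^2} \big) \qquad \forall\, u \in \mathcal{C}^\infty_c(\Lambda').
\end{align*}
Once such an estimate is established, hypoellipticity follows from a standard mollification/bootstrap argument: given a distributional solution $\mathfrak{u}$ of $\mathcal{L}\mathfrak{u} = h$ with $h$ smooth, one applies the estimate to smooth mollifications of $\mathfrak{u}$ (cut off by a partition of unity); each application gains $\epsilon$ local Sobolev derivatives, eventually placing $\mathfrak{u}$ in $H^{s}_{\mathrm{loc}}$ for every $s$ and therefore in $\mathcal{C}^\infty$ by Sobolev embedding. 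The $L^2_{\mathrm{loc}}$ right-hand side of the mollified equation is then controlled uniformly because $[\mathcal{L}, \text{mollifier}]$ is of order $\leq 1$ with bounded smooth coefficients on the cutoff region.

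The work lies in proving the subelliptic estimate. The baseline step is classical and non-technical: integration by parts against the sum-of-squares structure $\mathcal{L} = \tfrac{1}{2}\sum_q \mathcal{V}_q^2 + \mathcal{V}_0 + g$, followed by Cauchy--Schwarz to absorb the $\mathcal{V}_0$- and $g$-contributions, yields
\begin{align*}
\sum_{q=1}^d \|\mathcal{V}_q u\|_{L^2}^2 \;\leq\; C \big( \|\mathcal{L} u\|_{L^2}\|u\|_{L^2} + \|u\|_{L^2}^2 \big).
\end{align*}
This gives $L^2$-control of $u$ along the directions $\mathcal{V}_1, \ldots, \mathcal{V}_d$, but neither along $\mathcal{V}_0$ nor along coordinate directions missed by their span at a given $x$. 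The Hörmander hypothesis must then be used to recover the missing directions through iterated Lie brackets. The central technical ingredient is a commutator estimate in the pseudodifferential calculus: if two vector fields $V, W$ satisfy $\|Vu\|_{H^{-\delta}}, \|Wu\|_{H^{-\delta}} \leq C(\|\mathcal{L} u\|_{L^2} + \|u\|_{L^2})$, then $[V,W]u$ satisfies the analogous estimate in a slightly weaker Sobolev norm $H^{-\delta'}$ with $\delta' < 1$. Iterating along the bracket tree generating the Hörmander Lie algebra yields Sobolev control of every vector field of bracket-depth $\leq r$. Combining this with the pointwise spanning hypothesis, compactness of $\overline{\Lambda'}$, and a partition of unity converts this into control of every coordinate derivative $\partial_i u$ in $H^{-\delta}$ for a single $\delta < 1$, which is the desired subelliptic estimate with $\epsilon = 1-\delta$.

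The hardest part is the commutator estimate above: proving that low-order Sobolev control of $Vu$ and $Wu$ upgrades to Sobolev control of $[V,W]u$, with a quantitatively controlled loss of regularity, is the technical heart of Hörmander's original argument and requires the symbolic calculus of pseudodifferential operators; moreover, the final exponent $\epsilon$ degrades with the maximal bracket depth $r$ needed for the spanning condition (typically $\epsilon \lesssim 2^{-r}$). Rather than reproduce these details, I would invoke Hörmander's original paper~\cite{Hormander-original_hypoellipticity_paper} and the self-contained modern exposition in~\cite[Chapter~7]{Stroock-PDE}. A conceptually cleaner alternative is the Rothschild--Stein lifting method, which trivializes the Hörmander condition by lifting $\mathcal{L}$ to a sum-of-squares operator on a free nilpotent Lie group, where the bracket-based step-loss analysis reduces to homogeneous group-theoretic estimates.
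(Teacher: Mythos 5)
This statement is Hörmander's classical hypoellipticity theorem, which the paper does not prove but states as a known black box with references to~\cite{Hormander-original_hypoellipticity_paper} and~\cite{Stroock-PDE}; your proposal is a correct outline of exactly that classical argument (the sum-of-squares energy identity giving $L^2$ control along $\mathcal{V}_1,\ldots,\mathcal{V}_d$, the bracket-commutator iteration with geometrically degrading Sobolev exponent to recover the missing directions, and the mollification bootstrap from the resulting subelliptic estimate), with the genuinely hard commutator step appropriately deferred to the same references the paper cites. There is nothing to fault: you take the route the paper implicitly relies on, just spelled out one level deeper.
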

	
	Note that the criterion is independent of the order zero term $g(x)$ in the PDE, as long as $g$ is smooth.
	
	\subsubsection*{\textbf{Particle densities and Kolmogorov's PDEs}}

	Coming back to our application,
	provided that the random variables $(X_t)_{t > 0}$ have sufficiently regular density functions $\rho(x,t)$ over $\mathbb{R}^n$ (with auxiliary law of $X_0 \in \Lambda$; all processes are launched at the time $t=0$ throughout this subsection) or $\rho_w(x, t)$ (with $X_0 = w$), it was noticed already by Kolmogorov that 
	\begin{align*}
		(\mathcal{G}^* - \partial_t)\rho(x,t) = 0 \qquad 
		\text{and}
		\qquad
		(\mathcal{G}_w - \partial_t)\rho_w(x,t) = 0,
	\end{align*}
	where the subscript in $\mathcal{G}_w$ indicates that the operator is taken with respect to the initial point variable $w$.
	
	To indeed formally conclude such PDEs with H\"{o}rmander's theory, note first that $\mathcal{G}^*$ can be expressed in terms of the vector fields~\eqref{eq:Lie generators 1}--\eqref{eq:Lie generators 2} as
	\begin{align}
		\label{eq:explicit G*}
		\mathcal{G}^* & = \tfrac{1}{2} \sum_{q=1}^d \mathcal{U}^2_q - \mathcal{U}_0 + \sum_{q=1}^d \sum_{i=1}^n \big( \partial_i \sigma_{i,q} (x) \big) \mathcal{U}_q \\
		\nonumber
		& \qquad + \tfrac{1}{2}  \sum_{i=1}^n \sum_{j=1}^n \big(\partial_{ij} a_{i,j}(x) \big) - \sum_{i=1}^n \big(\partial_{i} b_{i}(x) \big)
		,
	\end{align}
	where the two last, order zero terms are currently unimportant.
	With the similarity of $\mathcal{G}$ and $\mathcal{G}^*$, applying H\"{o}rmander's criterion in $n+1$ (resp. $2n+1$) dimensions and treating separately the time dimension, one observes~\cite[Theorems~4.7.18 and~4.7.20]{Stroock-PDE} that if condition~\eqref{eq:Hormander criterion} is satisfied
	then the operator $(\mathcal{G} - \partial_t)$ on $\mathcal{C}^\infty_c ( \Lambda \times (0, \infty) )$
	(resp. $(\mathcal{G}_x + \mathcal{G}^*_w - 2\partial_t)$ on $\mathcal{C}^\infty_c ( \Lambda  \times \Lambda \times (0, \infty) )$), is hypoelliptic. Kolmogorov's PDEs can thus be concluded:
	
	\begin{theorem}
		\label{thm:existence of a smooth density}
		Suppose that the diffusion coefficient functions $\sigma_{i, q}$ and $b_i$ in~\eqref{eq:main process of interest} can be smoothly extended to entire $\mathbb{R}^n$ with bounded derivatives of all orders, and that given $X_0 \in \Lambda$, the process $X_t$ almost surely stays inside $ \Lambda$ forever. If condition~\eqref{eq:Hormander criterion} is satisfied,
		then for each fixed $t > 0$, $X_t$ allows a density function $\rho(\cdot,t): \Lambda \to \mathbb{R}$ with respect to the Lebesgue measure on $\Lambda$ (denoted as $\rho_w(\cdot,t)$ if $X_0 = w$), and $\rho(\cdot, \cdot)$ (resp. $\rho_\cdot (\cdot, \cdot)$) is a smooth function on $\Lambda \times (0, \infty)$ (resp. on $\Lambda \times \Lambda \times (0, \infty)$) and solves the PDE $\partial_t \rho(x, t) = \mathcal{G}^*\rho(x, t)$ (resp. also the PDE $\partial_t \rho_w(x, t) = \mathcal{G}_w \rho_w(x, t)$).
	\end{theorem}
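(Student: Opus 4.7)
The plan is to apply H\"ormander's Theorem~\ref{thm:Hormander original} to the associated space-time operators, combined with It\^o's formula, following essentially the skeleton of the proof of Proposition~\ref{prop:killed} but with the substantial simplification that $X$ never leaves $\Lambda$, so no cemetery point or survival indicator enters. The smoothness of the densities and the Kolmogorov PDEs are then produced simultaneously by the hypoellipticity conclusion.

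First I would set up the distribution
\begin{equation*}
	\mathfrak{u}(\varphi) := \int_0^\infty \mathbb{E}[\varphi(X_t, t)] \, \diff t, \qquad \varphi \in \mathcal{C}^\infty_c(\Lambda \times (0, \infty)),
\end{equation*}
for the marginal case (in which $X_0$ has a given law on $\Lambda$). Since $\sigma, b$ and all their derivatives extend to bounded functions on $\mathbb{R}^n$ and $\varphi$ has compact support, the stochastic integral arising from It\^o's formula applied to $t \mapsto \varphi(X_t, t)$ is a true centered martingale, while the endpoint terms $\varphi(X_0, 0)$ and $\varphi(X_T, T)$ vanish for $T$ larger than the temporal support of $\varphi$. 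Taking expectation and applying Fubini yields
\begin{equation*}
	0 = \int_0^\infty \mathbb{E}[(\mathcal{G}\varphi + \partial_t \varphi)(X_t, t)] \, \diff t = \mathfrak{u}\bigl((\mathcal{G} + \partial_t)\varphi\bigr),
\end{equation*}
which is the distributional PDE $(\mathcal{G}^* - \partial_t)\mathfrak{u} = 0$. By the hypoellipticity of $\mathcal{G} - \partial_t$ on $\Lambda \times (0, \infty)$, a consequence of condition~\eqref{eq:Hormander criterion} via the cited Stroock result, $\mathfrak{u}$ is represented by a unique smooth $\rho(x, t)$ pointwise satisfying $\partial_t \rho = \mathcal{G}^* \rho$; disintegration in $t$ identifies $\rho(\cdot, t)$ with the Lebesgue density of $X_t$.

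For the joint density, I would analogously set $\tilde{\mathfrak{u}}(\varphi) := \int_{w \in \Lambda} \int_0^\infty \mathbb{E}_w[\varphi(w, X_t, t)] \, \diff t\, \diff^n w$ on $\mathcal{C}^\infty_c(\Lambda \times \Lambda \times (0, \infty))$, and split the test functions as in Proposition~\ref{prop:killed}: for $\varphi(w, x, t) = \chi(w)\psi_w(x, t)$, the above It\^o argument (launched from $w$ and integrated over $w$) yields $\tilde{\mathfrak{u}}\bigl((\mathcal{G}_x + \partial_t)\varphi\bigr) = 0$; for $\varphi(w, x, t) = \chi(w, t)\psi(x)$, the function $\tilde f(x, t) := \mathbb{E}_x[\psi(X_{T-t})]$ is a martingale observable of $X$, hence (by Theorem~\ref{thm:mgale observables solve PDEs}, or directly since $\psi$ is smooth) a smooth solution of $(\mathcal{G} + \partial_t)\tilde f = 0$; integrating this against $\chi(w, T-t)$ and integrating by parts in $w$ and $t$ produces $\tilde{\mathfrak{u}}\bigl((\mathcal{G}^*_w - \partial_t)\varphi\bigr) = 0$. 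Smooth-bump partitions of unity extend both equations to arbitrary $\varphi$, and hypoellipticity of $\mathcal{G}_x + \mathcal{G}^*_w - 2\partial_t$ represents $\tilde{\mathfrak{u}}$ by a smooth kernel $\rho_w(x, t)$ solving both Kolmogorov PDEs pointwise.

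The main technical point is the verification of H\"ormander's criterion for the enlarged space-time operators in $n+1$ and $2n+1$ variables: the added time direction $\partial_t$ commutes with the spatial $\mathcal{U}_q$, so condition~\eqref{eq:Hormander criterion} must by itself supply the missing $n$ (resp.\ $2n$) spatial directions through Lie brackets of $\{\mathcal{U}_q\}$ with $\{[\mathcal{U}_q, \mathcal{U}_0]\}$. This is precisely why~\eqref{eq:Hormander criterion} is formulated through commutators with $\mathcal{U}_0$ rather than through $\mathcal{U}_0$ itself, and the fact that it suffices is already the content of the cited Stroock theorems; so the plan effectively reduces to this black-box invocation, on top of the It\^o bookkeeping and the routine identification of the distributional representatives with actual transition densities.
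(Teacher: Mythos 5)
The paper does not actually prove this statement from scratch: it is presented as a preliminary and delegated to \cite[Theorems~4.7.18 and~4.7.20]{Stroock-PDE}, with only the one-line reduction ``apply H\"ormander's criterion in $n+1$ resp.\ $2n+1$ dimensions, treating the time direction separately.'' Your reconstruction follows the same strategy as the paper's later proof of Proposition~\ref{prop:killed} (test distribution, It\^o plus Fubini for the forward equation, duality against $\tilde f(x,t)=\mathbb{E}_x[\psi(X_{T-t})]$ for the backward equation, then hypoellipticity of the space-time operators), and the forward half of your argument is clean and self-contained under the stated hypotheses.

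There is, however, one genuine logical problem in the backward step. You justify the smoothness of $\tilde f$ and the identity $(\mathcal{G}+\partial_t)\tilde f=0$ ``by Theorem~\ref{thm:mgale observables solve PDEs}.'' That is circular in the architecture of this paper: the proof of Theorem~\ref{thm:mgale observables solve PDEs} rests on Corollary~\ref{cor:strong Kolmogorov forward PDE for the slowed-down process}, which is obtained by \emph{combining} Lemma~\ref{lemma:no-hitting} \emph{with the very theorem you are proving}. (This is also why the paper may legitimately use Theorem~\ref{thm:mgale observables solve PDEs} in Proposition~\ref{prop:killed}, which comes after the main theorem, but not here.) Your parenthetical fallback ``or directly since $\psi$ is smooth'' is the correct escape route, but it is not free: smoothness of $w\mapsto\mathbb{E}_w[\psi(X_s)]$ and the classical backward equation $\partial_s\mathbb{E}_w[\psi(X_s)]=\mathcal{G}_w\mathbb{E}_w[\psi(X_s)]$ require differentiating the stochastic flow in its initial condition, which is exactly where the hypothesis ``bounded derivatives of all orders'' is consumed and why this preliminary theorem carries assumptions that Proposition~\ref{prop:killed} does not. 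As written, the non-circular branch of your argument is the one carrying the real content, and it is left as an unproved assertion; either supply the flow-differentiability argument (Kunita-type) or cite it explicitly, as the paper in effect does by outsourcing the whole statement to Stroock.
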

	
	\begin{remark}
		\label{rem:different Lie algebras}
		By Theorem~\ref{thm:Hormander original}, it is not hard to conclude that also the operators $\mathcal{G}$ and $\mathcal{G}^*$ on $\Lambda$, and $(\mathcal{G}^* \pm \partial_t)$ or $(\mathcal{G} \pm \partial_t)$ on $\Lambda \times (0, \infty) $ are all hypoelliptic under condition~\eqref{eq:Hormander criterion} (see, e.g.~\cite{Stroock-PDE}).
	\end{remark}

	\subsection{A slowed-down process}
	\label{subsec:slowed-down}
	
	As discussed in the introduction, a key novelty of the present paper is that we allow processes with finite life-time and the coefficients of the SDEs~\eqref{eq:main process of interest} that diverge at the domain boundary $\partial \Lambda$.
	The content of this subsection is to introduce a  slowed-down process $\hat{X}$ which (as we shall prove) is a random time-change of $X$ but ``forced'' to never hit $\partial\Lambda$. The proof of our main result is then based on the process $\hat{X}$, and more precisely on two key properties proven in this section. The first one is that martingale observables of $X$ are preserved in the time-change, and the second one is that, assuming H\"{o}rmander's criterion for the original process $X$, the slowed-down process $\hat{X}$ has smooth particle densities $\hat{\rho}$ that satisfy Kolmogorov's PDEs.
	
	
	Let now $\Theta \subset \Lambda$ is a bounded open set. We introduce a smooth cutoff function $\vartheta: \mathbb{R}^n \to [0,1]$ which is strictly positive in $\Theta$, zero on $\Theta^c$, and one in the $\epsilon$-interior of $\Theta$ for some $\epsilon > 0$. We define the functions $\hat{\sigma}_{i, j}, \hat{b}_i: \mathbb{R}^n \to \mathbb{R}$, for $1 \leq i \leq n$ and $1 \leq j \leq d$, as zero for $x \not \in \Lambda$ and by $\hat{\sigma}_{i,j} (x) := \vartheta(x) \sigma_{i,j}(x)$ and $\hat{b}_{i,j} (x) := \vartheta(x)^2 b_{i}(x)$ for $x \in \Lambda$; accordingly, we also set $\hat{a} (x):= \hat{\sigma} (x) \hat{\sigma}^T(x)$, so $\hat{a}_{i,j} (x)= \vartheta(x)^2 a_{i,j}(x)$ for $x \in \Lambda$.
	The slowed-down process is then defined via $\hat{X}_0 = X_0$ (all processes are launched at the time $0$ throughout this subsection) and the SDEs in entire $\mathbb{R}^n$
	\begin{align}
		\label{eq:SDE def on hat-X}
		\diff \hat{X}^{i}_s &= \sum_{j=1}^d \hat{\sigma}_{i,j} (\hat{X}_s) \diff \tilde{B}^{j}_s + \hat{b}_i(\hat{X}_s) \diff s
		\\
		&
		\nonumber
		\stackrel{X_s \in \Lambda}{=}\vartheta(\hat X_s) \sum_{j=1}^d \sigma_{i,j}(\hat{X}_s) \diff \tilde{B}^{j}_s + \vartheta(\hat{X}_s)^2 b_{i}(\hat{X}_s) \diff s,
	\end{align}
	where we denote the time by $s$ and the Brownian motions by $\tilde{B}^j_s$ since $X_s$ and $\hat{X}_t$ will soon be embedded in the same probability space with a random time change $s=s(t)$ and with different Brownian motions (adapted to the different clocks).

	\subsubsection*{\textbf{$\hat{X}$ is a time-change of $X$}}
	
	We now explicate the proof of the fact that $\hat{X}$ can be embedded in the same probability space with $X$ so that it is a time-change of $X$.
	For background on random time-changes of martingales, we refer to~\cite{Jacod} or its translated summary in~\cite{Kobayashi}.
	
	Let $X_t$ be the original process in the usual filtration $\mathcal{F}_t$: 
	\begin{align}
		\label{eq:main process Doob decomposition}
		X^i_t = X^i_{0} + \sum_{j=1}^d \int_{r = 0}^t \sigma_{i,j}(X_r) \diff B^{j}_r + \int_{r = 0}^t  b_{i}(X_r) \diff r, \qquad t \geq 0.
	\end{align}
	Define a random time-change from $t \in [ 0, \tau_{\partial \Theta} )$ to a new time $s \geq 0$ by $s(t) =  \int_{r = 0}^t  \vartheta (X_r)^{-2} \diff r$. Denoting $Z_s = X_{t(s)}$, this time-change can be inverted: 
	\begin{align}
		\label{eq:def of time-change}
		t(s) =  \int_{\zeta = 0}^s  \vartheta (Z_\zeta )^{2} \diff \zeta =: \beta(s), \qquad s \geq 0 .
	\end{align}
	
	Define the new filtration $\mathcal{G}_s = \mathcal{F}_{T_s}$ where
	$$T_s = \inf \{ t \geq 0: \;  \int_{r = 0}^t  \vartheta (X_r)^{-2} \diff r < s \}$$
	is the $\mathcal{F}_t$ stopping time 
	that corresponds to a given $s \geq 0$. Now, the filtration $(\mathcal{G}_s)_{s \geq 0}$ indeed satisfies the usual conditions and a time-change of a continuous local $\mathcal{F}_t$ semimartingale such as $$Z_s := X_{\beta(s)}$$ is a continuous local $\mathcal{G}_s$ semimartingale~\cite{Jacod, Kobayashi}. We identify below the law of $Z$, based on two more properties:\footnote{
		Our time-change function is increasing and continuous and hence any process satisfies the technical condition of being \textit{in synchronization} with the time change~\cite{Jacod, Kobayashi}.}
	\begin{itemize}
		\item If $M_t$ is a continuous local $\mathcal{F}_t$-martingale, then $\hat{M}_s := M_{\beta(s)}$ is continuous local $\mathcal{G}_s$-martingale.
		\item 
		For continuous local $\mathcal{F}_t$-semimartingales $A_t, B_t$, setting $\hat{A}_s := A_{\beta(s)}$ and $\hat{B}_s := B_{\beta(s)}$, the quadratic covariation process in the new time becomes $\langle \hat{A}, \hat{B} \rangle_s = \langle A, B \rangle_{\beta(s)}$.
	\end{itemize}
	While  the next result can be extracted from \cite{Oksendal-90}, we provide a short proof for the reader's convenience.

	\begin{lemma}
		\label{lem:Xhat is a time-change of X}
		The process $Z_s := X_{\beta(s)}$ 
		has the same law, up to the hitting time of $\Theta^c$, than the solution to the It\^{o} SDEs
		\begin{align*}
			\diff Z^{i}_s  = \vartheta(Z_s) \sum_{j=1}^d \sigma_{i,j}(Z_s) \diff \tilde{B}^{j}_s + \vartheta(Z_s)^2 b_{i}(Z_s) \diff s, \qquad 1 \leq i \leq n
		\end{align*}
		for $s \geq 0$, where $\tilde{B}_s$ is a $d$-dimensional standard Brownian motion with respect to the filtration $\mathcal{G}_s$.
	\end{lemma}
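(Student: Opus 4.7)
The plan is to transform the Doob decomposition \eqref{eq:main process Doob decomposition} of $X_t$ under the random time-change $t = \beta(s)$, separately identifying the drift and martingale parts of $Z_s := X_{\beta(s)}$. For the finite-variation part, a deterministic substitution $r = \beta(u)$ with $\diff r = \vartheta(Z_u)^2\,\diff u$ immediately gives $\int_0^{\beta(s)} b_i(X_r)\,\diff r = \int_0^s \vartheta(Z_u)^2 b_i(Z_u)\,\diff u$, matching the drift in the target SDE.

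For the martingale part, I would construct the driving Brownian motion $\tilde B$ by defining, up to the hitting time of $\Theta^c$ (before which $\vartheta(X_r) > 0$),
\[
\tilde B^j_s := \int_0^{\beta(s)} \vartheta(X_r)^{-1}\,\diff B^j_r,
\]
and extending past this stopping time by an independent auxiliary Brownian motion on a possibly enlarged probability space. By the cited time-change rules, $\tilde B^j$ is a continuous local $\mathcal{G}_s$-martingale, and the quadratic-covariation rule gives $\langle \tilde B^j, \tilde B^k\rangle_s = \int_0^{\beta(s)} \vartheta(X_r)^{-2}\delta_{jk}\,\diff r = s\,\delta_{jk}$. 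L\'evy's characterization in the filtration $\mathcal{G}_s$ then identifies $\tilde B = (\tilde B^1, \dots, \tilde B^d)$ as a $d$-dimensional $\mathcal{G}_s$-Brownian motion.

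To close the argument, I would set
\[
N^i_s := \sum_{j=1}^d \int_0^{\beta(s)} \sigma_{i,j}(X_r)\,\diff B^j_r \;-\; \sum_{j=1}^d \int_0^s \vartheta(Z_u)\sigma_{i,j}(Z_u)\,\diff \tilde B^j_u,
\]
a continuous local $\mathcal{G}_s$-martingale, and show $\langle N^i\rangle \equiv 0$ by matching the three terms of the expansion: both diagonal contributions equal $\int_0^s \vartheta(Z_u)^2 a_{i,i}(Z_u)\,\diff u$ (the first via $\langle \hat M^i\rangle_s = \int_0^{\beta(s)} a_{i,i}(X_r)\,\diff r$ combined with the time-change, the second by It\^o isometry), and the cross-covariation evaluates to the same via $\langle \hat M^i, \tilde B^j\rangle_s = \int_0^s \sigma_{i,j}(Z_u)\vartheta(Z_u)\,\diff u$, obtained by time-changing the covariation on the $\mathcal{F}_t$ clock. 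The main technical obstacle is the bookkeeping around the time-change rule for stochastic integrals together with the blow-up of $\vartheta^{-1}$ near $\partial\Theta$; both are resolved by first working up to $\tau_{\partial\Theta}$, where $\vartheta^{-1}$ is pathwise locally bounded so all stochastic integrals against $B$ are well-defined, and then extending by independence past this stopping time.
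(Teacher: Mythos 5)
Your proof is correct and follows the same skeleton as the paper's --- time-change the Doob decomposition of $X$, handle the drift by the deterministic substitution $r = \beta(u)$, and identify the martingale part through quadratic covariations --- but your construction of the driving Brownian motion is genuinely different and arguably cleaner. The paper normalizes each time-changed integrator $\hat M^{i,j}_s = \int_0^{\beta(s)}\sigma_{i,j}(X_r)\,\diff B^j_r$ by $\sigma_{i,j}(Z_s)\vartheta(Z_s)$, producing processes $m^{i,j}$ with $\diff\langle m^{i,j}, m^{i',j'}\rangle_s = \delta_{j,j'}\,\mathbb{I}\{\sigma_{i,j}(Z_s)\neq 0\}\,\diff s$, and then reads off $\diff m^{i,j}_s = \mathbb{I}\{\sigma_{i,j}(Z_s)\neq 0\}\,\diff\tilde B^j_s$; this forces a case analysis on the zero set of $\sigma_{i,j}$, an extension of the probability space there, and an implicit consistency check that the same $\tilde B^j$ serves all coordinates $i$. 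Your choice $\tilde B^j_s = \int_0^{\beta(s)}\vartheta(X_r)^{-1}\,\diff B^j_r$ sidesteps all of this: only the strict positivity of $\vartheta$ before $\tau_{\Theta^c}$ is needed, the covariation $\langle\tilde B^j,\tilde B^k\rangle_s = \delta_{jk}\int_0^{\beta(s)}\vartheta(X_r)^{-2}\,\diff r = \delta_{jk}\,s$ drops out of the very definition of $\beta$ as the inverse of $s(t)=\int_0^t\vartheta(X_r)^{-2}\,\diff r$, and L\'evy's characterization pins down a single $d$-dimensional $\mathcal{G}_s$-Brownian motion for all $i$ at once. The price is the extra verification $\langle N^i\rangle\equiv 0$, which your three-term expansion carries out correctly (both diagonal terms and the cross term equal $\int_0^s\vartheta(Z_u)^2 a_{i,i}(Z_u)\,\diff u$). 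Your localization before $\tau_{\partial\Theta}$ to control the blow-up of $\vartheta^{-1}$ is also adequate, since the lemma only asserts equality of laws up to the hitting time of $\Theta^c$.
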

	
	\begin{proof}
		The finite variation part of $X$ in~\eqref{eq:main process Doob decomposition} is readily transferred to the new time parametrization: denoting $r = \beta(\zeta)$, we have $\diff r = \vartheta (Z_\zeta)^2 \diff \zeta$ and
		\begin{align*}
			\int_{r = 0}^{\beta(s)}  b_{i}(X_r) \diff r = \int_{\zeta = 0}^{s}  b_{i}(Z_\zeta ) \vartheta (Z_\zeta)^2 \diff \zeta.
		\end{align*}
		For the continuous local martingale part of $X$ in~\eqref{eq:main process Doob decomposition}, since such parts are also simply re-parametrized, the $\mathcal{G}_s$ local  martingale part of $Z^i_s$ is 
		\begin{align*}
			& \sum_{j=1}^d M^{i,j}_{\beta(s)}, \qquad \text{where} \qquad  M^{i,j}_{\beta(s)}=\int_{r = 0}^{\beta(s)} \sigma_{i,j}(X_r) \diff B^{j}_r =: \hat{M}^{i,j}_{s}.
		\end{align*}
		The original $\mathcal{F}_t$ local martingales $M^{i,j}_t$ have the quadratic covariations
		\begin{align*}
			\langle M^{i,j}, M^{i',j'} \rangle_t = \delta_{j, j'} \int_{r = 0}^t \sigma_{i, j} (X_r) \sigma_{i', j'} (X_r) \diff r.
		\end{align*}
		Recalling that covariations of continuous local martingales are simply re-parametrized, $\hat{M}^{i,j}_{s}$ have the quadratic covariations
		\begin{align*}
			\langle \hat{M}^{i,j}, \hat{M}^{i',j'} \rangle_s &= \delta_{j, j'} \int_{r = 0}^{\beta(s)} \sigma_{i, j} (X_r) \sigma_{i', j'} (X_r) \diff r \nonumber\\
			&= \delta_{j, j'} \int_{\zeta = 0}^{s} \sigma_{i, j} (Z_\zeta) \sigma_{i', j'} (Z_\zeta) \vartheta (Z_\zeta)^2 \diff \zeta.
		\end{align*}
		In particular, setting (up to the hitting of $\Theta^c$) 
		$$
		\diff m^{i, j}_s = \frac{\mathbb{I} \{ \sigma_{i, j} (Z_s) \neq 0 \}}{\sigma_{i, j} (Z_s) \vartheta (Z_s)}\diff \hat{M}^{i,j}_s,
		$$ 
		It\^{o} calculus gives 
		\begin{align*}
			\langle m^{i,j}, m^{i',j'} \rangle_s = \delta_{j, j'} \mathbb{I} \{ \sigma_{i, j} (Z_s) \neq 0 \} ds.
		\end{align*}
		and we conclude that $\diff m^{i,j}_s =  \mathbb{I} \{ \sigma_{i, j} (Z_s) \neq 0 \} \diff \tilde{B}^{j}_s$, where $\tilde{B}^j_s$, $1 \leq j \leq d$ are independent $\mathcal{G}_s$-Brownian motions\footnote{To be very precise, we might need to extend the probability space to define $\tilde{B}^j_s$ if there exist open time intervals when $\sigma_{i, j} (Z_s) = 0$.} , and 
		\begin{align*}
			\diff \text{[local martingale part of $Z^i_s$]} = \vartheta (Z_s) \sum_{j=1}^d \sigma_{i, j} (Z_s)  \diff \tilde{B}^j_s.
		\end{align*}
		Putting together the finite variation and local martingale parts, we conclude the claimed form of $Z_s$.

	\end{proof}

	
	\subsubsection*{\textbf{Martingale observables are preserved}}
	
	As in Theorem~\ref{thm:mgale observables solve PDEs}, fix $T>0$ and let $ g: \Lambda  \to \mathbb{R}$ and $ h: \Lambda \times [0,T) \to \mathbb{R}$ continuous functions.
	Due to the previous result, we embed $\hat{X}$  in the same probability space as $X$  by setting $\hat{X}_s = Z_s = X_{\beta(s)}$; note that the filtration to study $\hat{X}_s$ in is $\mathcal{G}_s$, and both processes are launched at the time $t=s=0$ in the set $\Theta$.

	Set
	\begin{align}
		\label{eq:def of time-changed gamma}
		\hat{ \gamma}_s =: \gamma_{ \beta(s)} = \exp \Big(\int_{r =0}^{\beta(s)} g(X_r )  \diff r \Big) = \exp \Big(\int_{\zeta=0}^{s} g(\hat{X}_\zeta ) \vartheta( \hat{X}_\zeta )^2  \diff \zeta \Big)
	\end{align}
	and
	\begin{align}
		\label{eq:def of time-changed H}
		\hat{H}_{ s} =:  H_{ \beta(s)}= \int_{r = 0}^{\beta(s)} \gamma_{r} h (X_r, r)  \diff r  
		= \int_{ \zeta = 0}^{s} \hat{\gamma}_{\zeta} h (\hat{X}_\zeta, \beta(\zeta))  \vartheta( \hat{X}_\zeta )^2  \diff \zeta . 
	\end{align}
	Hence for ${M}_t = f ({X}_t, t) {\gamma}_t +  H_t$  we have
	$$M_{\beta(s)} = f (\hat{X}_s, \beta(s)) \hat{\gamma}_s + \hat{ H}_s =: \hat{M}_s,$$
	which makes sense at least for $s \in [0, T)$ (since $\beta(s) \leq s$).
	
	\begin{lemma}
		\label{lem:martingale time change}
		Let $ g$ and $ h$ be continuous and $f: \Lambda \times[0,T) \to \mathbb{R}$ locally bounded and Borel measurable.
		If $f$ is a $(g, h)$-martingale observable under $X$ for $t \in [0, T)$
		then $\hat{M}_s = f (\hat{X}_s, \beta(s)) \hat{\gamma}_s + \hat{ H}_s$ is a $\mathcal{G}_s$-martingale for $s \in [0, T)$.
	\end{lemma}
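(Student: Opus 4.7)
The plan is to combine the time-change principle from Lemma~\ref{lem:Xhat is a time-change of X} with a boundedness argument to upgrade a local martingale into a true martingale. By hypothesis, the process $M_t = \gamma_t f(X_t, t) + H_t$ is a local $\mathcal{F}_t$-martingale up to the exit time $\tau^*$ of $(X_t, t)$ from $\Lambda \times [0, T)$, and by inspection of~\eqref{eq:def of time-changed gamma}--\eqref{eq:def of time-changed H} one has the pointwise identity $\hat M_s = M_{\beta(s)}$.

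First, I would apply the general fact recalled just before Lemma~\ref{lem:Xhat is a time-change of X}: the time change $s \mapsto \beta(s)$ sends continuous local $\mathcal{F}_t$-martingales to continuous local $\mathcal{G}_s$-martingales. The finite-lifetime caveat requires care. A localizing sequence $\tau_n \nearrow \tau^*$ for $M$ should be translated into a $\mathcal{G}_s$-localizing sequence $\sigma_n := \inf\{s \geq 0 : \beta(s) \geq \tau_n\}$. Because the cutoff forces $\hat X_s \in \bar{\Theta} \subset \Lambda$ for all $s \geq 0$ and $\beta(s) \leq s$, we have $X_{\beta(s)} = \hat X_s \in \bar{\Theta}$, so $\beta(s) < \tau_{\partial \Lambda}$, and combined with $\beta(s) < T$ whenever $s < T$, this yields $\sigma_n \to \infty$ almost surely. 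Hence $\hat M_{s \wedge \sigma_n}$ is a bona fide $\mathcal{G}_s$-martingale for each $n$, exhibiting $\hat M$ as a local $\mathcal{G}_s$-martingale on $[0, T)$.

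To promote this to a genuine martingale, I would fix an arbitrary $T' \in (0, T)$ and show that $\hat M$ is uniformly bounded on $[0, T']$. For $s \in [0, T']$ we have $\hat X_s \in \bar{\Theta}$ (a compact subset of $\Lambda$) and $\beta(s) \in [0, T']$; local boundedness of $f$ then yields $|f(\hat X_s, \beta(s))| \leq C_f$, continuity of $g$ gives $|g| \leq C_g$ on $\bar{\Theta}$ and hence $|\hat\gamma_s| \leq e^{C_g T'}$ by~\eqref{eq:def of time-changed gamma}, and continuity of $h$ gives $|h| \leq C_h$ on $\bar{\Theta} \times [0, T']$ and hence $|\hat H_s| \leq T' C_h e^{C_g T'}$ by~\eqref{eq:def of time-changed H}. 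Summing, $\hat M$ is uniformly bounded on $[0, T']$, and dominated convergence applied to the optional-stopping identity for $\hat M_{\cdot \wedge \sigma_n}$ upgrades it to a genuine martingale on $[0, T']$. Since $T' < T$ was arbitrary, $\hat M$ is a $\mathcal{G}_s$-martingale on $[0, T)$.

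The main obstacle I anticipate is the bookkeeping around the localization and time-change: verifying that $\sigma_n$ really is a $\mathcal{G}_s$-stopping time and that the local martingale property passes cleanly through the change of clock, despite $M$ only being defined up to $\tau^*$. The uniform boundedness step is where the design of the slowed-down process truly pays off: confining $\hat X$ to a compact subset of $\Lambda$ is precisely what converts the \emph{local} information encoded by local boundedness of $f$ and continuity of $g, h$ into a \emph{uniform} bound, which is exactly what is needed to close the argument.
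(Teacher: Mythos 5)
Your proposal is correct and follows essentially the same route as the paper: both transfer a localizing sequence for $M$ through the time change (using optional sampling / the standard time-change facts) to get the local martingale property of $\hat M$, and both exploit the confinement of $\hat X$ to $\overline{\Theta}$ together with the local boundedness of $f$ and continuity of $g,h$ to get boundedness on compact time intervals and hence a genuine martingale. The only difference is cosmetic ordering of these two steps.
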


	\begin{proof}
			Clearly $\hat{M}_s$ is adapted. It is also bounded on bounded time intervals (since the slowing-down restricts $\hat{X}$ onto $\Theta$) and as such integrable and a genuine martingale if it is a local martingale. For the latter, we show the conditional expectation condition with localizing stopping times.
		Let $\tau_n$ be a localizing sequence of bounded $\mathcal{F}_t$-stopping times for $M_t$ and let $s(\tau_n)$ be the corresponding $\mathcal{G}_s$-stopping time. 
		For $s'>s$ we have
		\begin{align*}
			\mathbb{E}[\hat{M}_{s'}^{s(\tau_n)} \; | \; \mathcal{G}_s] = \mathbb{E}[M^{\tau_n}_{T_{s'}} \; | \; \mathcal{F}_{T_s}] = M^{\tau_n}_{T_{s'} \wedge T_s} = \hat{M}^{s(\tau_n)}_{s' \wedge s} = \hat{M}^{s(\tau_n)}_{s},
		\end{align*}
		where we have used optional sampling theorem.
		Thus $\hat{M}$ is a local martingale (since $s(\tau_n) \geq \tau_n$ is a localizing sequence). 
	\end{proof}
	
	
	\subsubsection*{\textbf{H\"{o}rmander's criterion is preserved}}
	
	Let us first explicate the relevant operators for the slowed-down process:
	\begin{align}
		\label{eq:Uhat}
		\hat{\mathcal{U}}_q &:= \sum_{i=1}^n \hat{\sigma}_{i, q} (x) \partial_i = \vartheta(x) \mathcal{U}_q, \qquad q=1, \ldots, d  \qquad \text{and}\\
		\nonumber
		\hat{\mathcal{U}}_0 &:= \sum_{i= 1}^n \hat{b}_i(x) \partial_i - \tfrac{1}{2} \sum_{q= 1}^d \sum_{i= 1}^n \big( \hat{\mathcal{U}}_q \hat{\sigma}_{i, q} (x) \big) \partial_i \\
		\nonumber
		&= \vartheta(x)^2 \mathcal{U}_0  - \tfrac{1}{2} \sum_{q= 1}^d \sum_{i= 1}^n \sum_{j= 1}^n \big(  \vartheta(x) \sigma_{i, q} (x) \sigma_{j, q} (x) \partial_j \vartheta(x) \big) \partial_i \\
		\label{eq:Vhat}
		&= \vartheta(x)^2 \mathcal{U}_0 - \tfrac{\vartheta(x)}{2} \sum_{q= 1}^d \sum_{j= 1}^n \big(   \sigma_{j, q} (x) \partial_j \vartheta(x) \big) \mathcal{U}_q. 
	\end{align}
	
	Based on Equations~\eqref{eq:Uhat}--\eqref{eq:Vhat}, proving the statement inductively for the linear spans of more and more nested Lie brackets, one readily obtains:
	
	\begin{lemma}
		At every $x \in \Theta$, the dimensions of the two Lie algebras, generated by $\{ \mathcal{U}_i \}_{i=1}^q$ and $\{ [\mathcal{U}_i, \mathcal{U}_0] \}_{i=1}^q$, or by $\{ \hat{\mathcal{U}}_i \}_{i=1}^q$ and $\{ [\hat{\mathcal{U}}_i, \hat{\mathcal{U}}_0] \}_{i=1}^q$, coincide.
	\end{lemma}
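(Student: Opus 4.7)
The plan is to prove the equality of dimensions by induction on the depth of nested Lie brackets, applying the Leibniz-type identity
\[
[fV, gW] \;=\; fg\,[V, W] \;+\; f(Vg)\,W \;-\; g(Wf)\,V
\]
to the explicit relations $\hat{\mathcal{U}}_q = \vartheta\,\mathcal{U}_q$ and $\hat{\mathcal{U}}_0 = \vartheta^2\mathcal{U}_0 - \tfrac{\vartheta}{2}\sum_q(\mathcal{U}_q\vartheta)\mathcal{U}_q$ from \eqref{eq:Uhat}--\eqref{eq:Vhat}. The driving observation is that $\vartheta(x)>0$ throughout $\Theta$, which makes the leading $\vartheta$-factor in each expansion invertible at $x$ and so enables conversion between hat and non-hat brackets at each fixed depth modulo corrections of strictly lower depth.

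At depth $1$, the identity $\hat{\mathcal{U}}_q(x) = \vartheta(x)\mathcal{U}_q(x)$ directly gives equality of the spans of the first families of generators at $x\in\Theta$. For the second families, a direct Leibniz computation yields
\[
[\hat{\mathcal{U}}_q, \hat{\mathcal{U}}_0] \;=\; \vartheta^3\,[\mathcal{U}_q, \mathcal{U}_0] \;+\; \sum_p \alpha_{q,p}(x)\,\mathcal{U}_p \;+\; \beta_q(x)\,\mathcal{U}_0,
\]
for smooth coefficients $\alpha_{q,p},\beta_q$ built from $\vartheta,\sigma$ and their derivatives. Dividing by $\vartheta(x)^3>0$ recovers $[\mathcal{U}_q,\mathcal{U}_0](x)$ as an explicit combination of the first-level hat bracket and depth-$0$ terms at $x$, and conversely. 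The inductive step is analogous: expanding a depth-$(k{+}1)$ hat bracket by Leibniz produces a positive $\vartheta$-power times the corresponding non-hat bracket, with all remaining contributions sitting in the depth-$\leq k$ span, which is controlled by the inductive hypothesis.

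The main technical obstacle is the transient appearance of $\mathcal{U}_0$ itself among the correction terms, since $\mathcal{U}_0$ is not one of the generators of the parabolic Hörmander algebra (only $[\mathcal{U}_q,\mathcal{U}_0]$ is). A natural way to handle this is to first prove the analogue for the full Lie algebras $\mathrm{Lie}(\mathcal{U}_0,\mathcal{U}_1,\ldots,\mathcal{U}_d)$ and $\mathrm{Lie}(\hat{\mathcal{U}}_0,\ldots,\hat{\mathcal{U}}_d)$, where the induction closes cleanly because $\hat{\mathcal{U}}_0,\ldots,\hat{\mathcal{U}}_d$ are $\vartheta$-linear combinations of $\mathcal{U}_0,\ldots,\mathcal{U}_d$ with invertible coefficient matrix at $x\in\Theta$; then pass to the parabolic subspace by noting that each side differs from the corresponding full algebra at $x$ by at most the one-dimensional subspace $\mathbb{R}\,\mathcal{U}_0(x)$ (resp.\ $\mathbb{R}\,\hat{\mathcal{U}}_0(x)$), and these extra directions coincide at $x$ because $\hat{\mathcal{U}}_0(x)$ and $\vartheta(x)^2\mathcal{U}_0(x)$ differ only by an element of the common depth-$1$ span. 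The careful bookkeeping of these $\mathcal{U}_0$-corrections through the induction is where the real work lies.
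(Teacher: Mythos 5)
Your overall strategy---induction on the depth of nested brackets driven by the relations \eqref{eq:Uhat}--\eqref{eq:Vhat} and the positivity of $\vartheta$ on $\Theta$---is exactly the route the paper takes; indeed the paper's entire proof is the one-sentence assertion that this induction works. You have, moreover, correctly put your finger on the one place where it does \emph{not} close: the Leibniz expansion of $[\hat{\mathcal{U}}_q,\hat{\mathcal{U}}_0]$ produces the term $\vartheta(\mathcal{U}_q\vartheta^2)\,\mathcal{U}_0=2\vartheta^2(\mathcal{U}_q\vartheta)\,\mathcal{U}_0$, and $\mathcal{U}_0$ is not a generator of the parabolic Lie algebra. (A small inaccuracy in your depth-one display: the second summand of \eqref{eq:Vhat} also contributes terms proportional to $[\mathcal{U}_q,\mathcal{U}_p]$, which are neither of the form $\alpha_{q,p}\mathcal{U}_p$ nor $\beta_q\mathcal{U}_0$; they are harmless, since $[\mathcal{U}_q,\mathcal{U}_p](x)$ lies in both spans, but they should appear.)

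The genuine gap is in your final descent from the full algebras to the parabolic ones, and it cannot be patched, because the statement being descended to fails without further hypotheses. Knowing that the full spans coincide at $x$ and that each parabolic span has codimension at most one in the corresponding full span only bounds the difference of the two parabolic dimensions by one; and the observation that $\hat{\mathcal{U}}_0(x)\equiv\vartheta(x)^2\mathcal{U}_0(x)$ modulo the common depth-one span compares the two ``extra directions'' modulo the very subspaces whose equality is at stake, so nothing follows from it. Concretely, take $n=2$, $d=1$, $\sigma\equiv(1,0)^T$, $b\equiv(0,1)^T$, so that $\mathcal{U}_1=\partial_1$, $\mathcal{U}_0=\partial_2$, $[\mathcal{U}_1,\mathcal{U}_0]=0$, and the unhatted parabolic algebra is $\mathbb{R}\partial_1$ everywhere. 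On the other hand,
\[
[\hat{\mathcal{U}}_1,\hat{\mathcal{U}}_0]\;=\;2\vartheta^2(\partial_1\vartheta)\,\partial_2+(\cdots)\,\partial_1,
\]
so at every $x\in\Theta$ in the collar where $\partial_1\vartheta(x)\neq0$ the hatted parabolic algebra has dimension two. The dimensions therefore genuinely fail to coincide there, so no correct proof of the lemma as literally stated exists; the paper's one-line induction suffers from the same defect you identified.

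What the applications (Lemma~4.6 and Corollary~4.7) actually require is only the implication that condition~\eqref{eq:Hormander criterion} for $X$ forces the \emph{hatted} parabolic algebra to have dimension $n$ on $\Theta$. Your bookkeeping shows that, modulo the hatted span at $x$, every element of the unhatted parabolic algebra reduces to a scalar multiple of $\mathcal{U}_0(x)$, which under \eqref{eq:Hormander criterion} yields $\dim\hat{L}(x)\geq n-1$; closing the last dimension (or restating the lemma so that it is only asserted under \eqref{eq:Hormander criterion}, possibly with enlarged generating families) is precisely the remaining work, and neither your write-up nor the paper supplies it.
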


	\subsubsection*{\textbf{Smooth particle densities and Kolmogorov's PDEs}}
	
	
	\begin{lemma}
		\label{lemma:no-hitting}
		Let $\hat{X}_0 = w \in \Theta$, and suppose that the Lie algebra generated by $\{ \mathcal{U}_q \}_{q=1}^d$ and $\{ [\mathcal{U}_0, \mathcal{U}_q] \}_{q=1}^d$ is of dimension $n$ at the point $w$. Then, the hitting time to $\Theta^c$ by $\hat{X}_s$ is almost surely infinite.
	\end{lemma}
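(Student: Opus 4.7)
The plan is to use $\varphi(x) := -\log \vartheta(x)$ as a Lyapunov function on $\Theta$. Since $\vartheta$ is strictly positive in $\Theta$, bounded above by $1$, and vanishes on $\partial \Theta$, one has $\varphi \geq 0$ on $\Theta$ with $\varphi(x) \to +\infty$ as $x \to \partial\Theta$. Hence it suffices to show that $\varphi(\hat X_s)$ does not blow up in finite time almost surely; combined with the fact that $\hat\sigma$ and $\hat b$ both vanish on $\Theta^c$ (so that $\hat X$ is frozen the instant it touches $\partial\Theta$), this yields $\tau_{\Theta^c} = \infty$ a.s.

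Concretely, I would localize with the stopping times $\tau_n := \inf\{s \geq 0 : \vartheta(\hat X_s) \leq 1/n\}$, so that on $[0, \tau_n]$ the process stays in the compact set $K_n := \{\vartheta \geq 1/n\} \subset \Theta$, where $\varphi$ is smooth. In the natural setting $\overline{\Theta} \subset \Lambda$ (ensuring $\sigma$, $b$ and all derivatives of $\vartheta$ are bounded on $\overline{\Theta}$), I would then apply It\^o's formula to $\varphi(\hat X_{s \wedge \tau_n})$. The structural point is that the singular factors $1/\vartheta$ and $1/\vartheta^2$ produced by differentiating $-\log\vartheta$ are cancelled \emph{exactly} by the factors $\vartheta$ in $\hat\sigma_{i,j} = \vartheta\, \sigma_{i,j}$ and the factor $\vartheta^2$ in $\hat a_{i,j} = \vartheta^2 a_{i,j}$ and $\hat b_i = \vartheta^2 b_i$ --- which is precisely the reason for using $\vartheta^2$ (rather than $\vartheta$) in the drift rescaling in the definition of $\hat X$. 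The outcome is a decomposition
\[
\varphi(\hat X_{s \wedge \tau_n}) = \varphi(w) + M_{s \wedge \tau_n} + \int_0^{s \wedge \tau_n} A_u\, du,
\]
where both $|A_u|$ and the quadratic-variation density of the local martingale $M$ are bounded by a constant $C = C(\Theta)$; in particular $M$ is a genuine martingale on any bounded time interval.

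Taking expectations and using $\varphi \geq 0$ on $\Theta$ gives $\mathbb{E}[\varphi(\hat X_{s \wedge \tau_n})] \leq \varphi(w) + Cs$. Since $\varphi(\hat X_{\tau_n}) \geq \log n$ on the event $\{\tau_n \leq s\}$, a Markov inequality yields
\[
(\log n)\, \mathbb{P}[\tau_n \leq s] \;\leq\; \varphi(w) + C s.
\]
Letting $n \to \infty$, where $\tau_n \nearrow \tau := \tau_{\partial\Theta}$ by continuity of $\hat X$, gives $\mathbb{P}[\tau \leq s] = 0$ for every $s \geq 0$, hence $\tau = \infty$ a.s., as claimed.

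The main (and essentially only) delicate step is the It\^o computation verifying the claimed cancellation of singular prefactors so that the drift and quadratic-variation density of $\varphi(\hat X)$ are bounded; once this is in hand, the rest is a routine Markov-type argument. Somewhat curiously, the H\"ormander hypothesis at $w$ does not appear to be explicitly invoked in this argument, and is presumably listed only because it is the standing assumption under which $\hat X$'s smooth transition density (Theorem~\ref{thm:existence of a smooth density}) will be used downstream.
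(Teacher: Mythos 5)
Your argument is correct, but it proves the lemma by a genuinely different route from the paper. The paper's proof invokes the H\"ormander criterion within Malliavin calculus to get, for each fixed $s>0$, an absolutely continuous law for $\hat X_s$ with a (continuous) density $\hat\rho(\cdot,s)$; since the coefficients vanish on $\Theta^c$, the process is frozen from $\tau_{\Theta^c}$ onward, so $\mathbb{P}[\tau_{\Theta^c}\le s]=\mathbb{P}[\hat X_s\in\Theta^c]=\int_{\Theta^c}\hat\rho(x,s)\,\diff^n x=0$ because the density is supported on $\overline\Theta$ and vanishes on $\partial\Theta$. Your Lyapunov argument with $\varphi=-\log\vartheta$ replaces this entirely: the It\^o computation does produce bounded drift and quadratic-variation density (the $1/\vartheta$ and $1/\vartheta^2$ singularities cancel against the $\vartheta$ and $\vartheta^2$ prefactors exactly as you say), the localized optional stopping and Chebyshev step is sound, and $\tau_n\nearrow\tau_{\Theta^c}$ by continuity of $\vartheta\circ\hat X$. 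What each approach buys: the paper's proof is essentially free once the Malliavin/H\"ormander density machinery is on the table (it is needed downstream in Corollary~\ref{cor:strong Kolmogorov forward PDE for the slowed-down process} anyway), whereas yours is elementary and self-contained, and it confirms your closing observation --- the Lie algebra hypothesis plays no role in the inaccessibility of $\partial\Theta$, which is purely a consequence of the degeneration of $\hat\sigma,\hat b$ there; the hypothesis is genuinely used only in the paper's chosen proof and in the subsequent density statements. The one caveat is your standing assumption $\overline\Theta\subset\Lambda$ (so that $\sigma$, $b$ are bounded on $\overline\Theta$): this is not stated verbatim in the paper's ``$\Theta\subset\Lambda$ bounded open'', but it is implicitly required there as well for $\hat\sigma,\hat b$ to extend smoothly and boundedly to $\mathbb{R}^n$, and in the application $\Theta$ is always chosen compactly inside $\Lambda$, so flagging it is appropriate rather than a gap.
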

	
	\begin{proof}
		The Hörmander criterion within Malliavin calculus~\cite[Theorem~2.3.2]{Nualart} asserts that under the Lie algebra dimension condition above (which, by the previous lemma, coincides for the processes $X_t$ and $\hat{X}_s$), $\hat{X}_s$ has a smooth density function $\hat{\rho}_{x_0}(\cdot, s) =: \hat{\rho} (\cdot, s): \mathbb{R}^n \to \mathbb{R}$, for each fixed $s>0$.\footnote{Note that the Lie algebra dimension criterion in~\cite{Nualart} is only verified at one point (not all of $\mathbb{R}^n$), a weaker assumption than, e.g., that of Theorem~\ref{thm:existence of a smooth density}. The price to pay is that~\cite[Theorem~2.3.2]{Nualart} cannot guarantee smoothness of the density in the time variable. We also remark that the generators in the statement in~\cite{Nualart} differ from those in Lemma~\ref{lemma:no-hitting}; the generated Lie algebras nevertheless coincide, by the Jacobi identity.
		} The proof is now based on a contradiction. Suppose $\tau_{\Theta^c} < \infty$. Then, by the very definition of $\hat{X}$, we have $\hat{X}_s = \hat{X}_{\tau_{\Theta^c}} \in  \Theta^c$ for all $s \geq \tau_{\Theta^c}$. Hence the density function $\hat{\rho}(\cdot,s)$ for $s\geq \tau_{\Theta^c}$ is supported on $\overline{\Theta}$ and, by continuity, equals to zero on $\partial \Theta$. On the other hand, for any fixed $s > 0$, one has $\mathbb{P}[\tau_{\Theta^c} \leq s] = \mathbb{P}[\hat{X}_s \in \Theta^c ] = \int_{x \in \Theta^c} \hat{\rho}(x, s) \diff^n x = 0$. Then it follows that $\mathbb{P}[\tau_{\Theta^c} < \infty]  = \mathbb{P}[\cup_{s \in \mathbb{N}} \{ \tau_{\Theta^c} \leq s\} ] = 0 $ leading to the contradiction. This completes the proof.
	\end{proof}

	Combining the previous lemma with Theorem~\ref{thm:existence of a smooth density}, we obtain: 
	\begin{corollary}
		\label{cor:strong Kolmogorov forward PDE for the slowed-down process}
		If the Lie algebra generated by $\{ \mathcal{U}_q \}_{q=1}^d$ and $\{ [\mathcal{U}_0, \mathcal{U}_q] \}_{q=1}^d$ is of dimension $n$ at every point $x \in \Theta$, and if $\mathbb{P}[\hat{X}_0 \in \Theta] = 1$, then for each fixed $t$, $\hat{X}_t$ (resp. $\hat{X}_t$ given $\hat{X}_0 = w \in \Theta$) has a density function $\hat{\rho}(\cdot,t): \Theta \to \mathbb{R}$ (resp. $\hat{\rho}_w (\cdot , t): \Theta \to \mathbb{R}$) with respect to the Lebesgue measure on $\Theta$, with zero boundary values at $\partial \Theta$. This density function is jointly smooth in its variables and solves the forward PDE $\partial_t \hat{\rho} (x, t) = \hat{\mathcal{G}}^*_x \hat{\rho} (x, t)$ (resp. also the backward PDE $\partial_t \hat{\rho}_w (x, t)= \hat{\mathcal{G}}_w \hat{\rho}_w (x, t)$). (Here $\hat{\mathcal{G}}$ and $\hat{\mathcal{G}}^*$ correspond to $\mathcal{G}$ and $\mathcal{G}^*$, but with $a$ and $b$ replaced by $\hat{a}$ and $\hat{b}$, respectively.)
	\end{corollary}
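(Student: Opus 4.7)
The plan is to combine the Lie-algebra preservation lemma and Lemma~\ref{lemma:no-hitting} with Theorem~\ref{thm:existence of a smooth density}, applied to $\hat{X}$ viewed as a diffusion on the bounded open set $\Theta$ (playing the role of ``$\Lambda$'' in Theorem~\ref{thm:existence of a smooth density}). The gain of passing from $X$ to $\hat X$ is precisely that the hypotheses of Theorem~\ref{thm:existence of a smooth density} become easy to verify in the slowed-down setting.

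First, I would check those hypotheses for $\hat X$ on $\Theta$. The coefficients $\hat{\sigma}_{i,j} = \vartheta\,\sigma_{i,j}$ and $\hat{b}_i = \vartheta^2 b_i$, extended by zero off $\Lambda$, are smooth on all of $\mathbb{R}^n$: inside $\Lambda$ they are products of smooth functions, while near $\partial\Lambda$ the factor $\vartheta$ vanishes identically (its support is compactly contained in $\Lambda$ since $\overline\Theta\subset\Lambda$), killing any blow-up of $\sigma,b$. Moreover these coefficients have compact support in $\overline{\Theta}$, so all of their derivatives are bounded. Lemma~\ref{lemma:no-hitting} gives that $\hat{X}$ stays in $\Theta$ for all $s\ge 0$ almost surely, and the preceding Lie-algebra preservation lemma transfers condition~\eqref{eq:Hormander criterion} from $\{\mathcal{U}_q,[\mathcal{U}_0,\mathcal{U}_q]\}$ to $\{\hat{\mathcal{U}}_q,[\hat{\mathcal{U}}_0,\hat{\mathcal{U}}_q]\}$ at every point of $\Theta$.

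Applying Theorem~\ref{thm:existence of a smooth density} with $\Lambda$ replaced by $\Theta$ then yields, for each fixed $t>0$, a density $\hat{\rho}(\cdot,t)$ of $\hat X_t$ with respect to Lebesgue measure on $\Theta$ (and analogously $\hat{\rho}_w(\cdot,t)$ when conditioning on $\hat{X}_0=w$), which is jointly smooth in $(x,t)$ (resp.\ $(w,x,t)$) on $\Theta\times(0,\infty)$ (resp.\ $\Theta\times\Theta\times(0,\infty)$), and which solves the forward Kolmogorov PDE $\partial_t\hat\rho = \hat{\mathcal{G}}^*_x \hat\rho$ as well as the backward one $\partial_t\hat\rho_w = \hat{\mathcal{G}}_w \hat\rho_w$. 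Here $\hat{\mathcal{G}},\hat{\mathcal{G}}^*$ are constructed from $\hat a,\hat b$ just as $\mathcal{G},\mathcal{G}^*$ are from $a,b$.

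The zero boundary values at $\partial\Theta$ should follow essentially for free: since Lemma~\ref{lemma:no-hitting} forces $\hat X_t\in\Theta$ almost surely, the law of $\hat X_t$ on $\mathbb{R}^n$ puts no mass on $\Theta^c$, so the natural extension of $\hat\rho(\cdot,t)$ by zero outside $\Theta$ is the Lebesgue density of this law on $\mathbb{R}^n$, and takes value zero on $\partial\Theta\subset\Theta^c$. The most delicate point, were one to insist on continuity of $\hat\rho$ up to $\partial\Theta$ from the inside, would be to argue that $\hat\rho(x,t)\to 0$ as $x\to\partial\Theta$ within $\Theta$; this would use the degeneracy of $\hat\sigma,\hat b$ on a collar neighborhood of $\partial\Theta$ (where $\vartheta\equiv 0$) together with a short-time estimate on the probability of $\hat X_t$ lying near $\partial\Theta$, though the weaker ``extension-by-zero'' sense suffices for the subsequent integration-by-parts arguments that this corollary is invoked for.
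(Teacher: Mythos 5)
Your proposal is correct and follows essentially the same route as the paper, which likewise obtains the corollary by verifying that the compactly supported smooth coefficients $\hat\sigma,\hat b$ and the no-escape property from Lemma~\ref{lemma:no-hitting} put $\hat X$ squarely within the hypotheses of Theorem~\ref{thm:existence of a smooth density} (with $\Theta$ in the role of $\Lambda$), the Hörmander condition being transferred by the preceding Lie-algebra lemma. The paper treats this as immediate and gives no further argument; your remark on the zero boundary values matches the paper's reasoning in the proof of Lemma~\ref{lemma:no-hitting}, where the Malliavin-calculus density on all of $\mathbb{R}^n$ is supported on $\overline\Theta$ and hence vanishes on $\partial\Theta$ by continuity.
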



	\subsection{Proof of Theorem \ref{thm:mgale observables solve PDEs}}
	\label{subsec:main-proof1}
	To explain the idea, consider first the special case when $g=h = 0$ and $f(x, t) = f(x)$ is time-independent. Then changing into the slowed-down process $\hat{X}$, we would have, by Lemma~\ref{lem:martingale time change} and Corollary~\ref{cor:strong Kolmogorov forward PDE for the slowed-down process}
	\begin{align}
		\label{eq:f_simple-case}
		f(w) &= \mathbb{E}_w [f(\hat{X}_t)] = \int_{x \in \Theta} f(x) \hat{\rho}_w(x, t) \diff^n x.
	\end{align}
	This expression is smooth in $w$ since $\hat{\rho}$ is smooth by Corollary~\ref{cor:strong Kolmogorov forward PDE for the slowed-down process} again. The martingaleness of $f(\hat{X}_t)$ then leads to the PDE 
	$\mathcal{G}f = 0$. Note that the application of conformally \textit{invariant} SLE observables (Equation~\ref{eq:def of conf cov SLE observable} with $\Delta_i = 0$ for all $2 \leq i \leq n$) is handled by this simple special case.\footnote{
		Even covariant observables can be handled in this manner by considering a process $X$ on $\mathbb{R}^{2n-1}$ for which $X^i_t$, $1 \leq i \leq n$ are given by~\eqref{eq:SLE SDE} and $X^{n-1+i}_t := g_t'(X^i_t)$ for $2 \leq i \leq n$, and proving the H\"{o}rmander criterion for that process, cf.~\cite{AHPY}.
	} In a more general setting, we need to take into account non-trivial (path-dependent) $\gamma$ and $H$, as well as the time-dependence of $f$. 
	
	\begin{proof}[Proof of Theorem \ref{thm:mgale observables solve PDEs}]

		We divide the proof into steps. 
		
		\textbf{Step 1: mollify $f$ in time while preserving the martingale.}
		First, let $\psi: \mathbb{R} \to \mathbb{R}_{\geq 0}$ be a smooth density function supported on $[0,1]$ and let $\psi_\epsilon (t) = \psi (t/\epsilon)/\epsilon$. Then set for $(x, t) \in \Lambda \times [0, T-\epsilon)$
		\begin{align}
			\label{eq:f_eps}
			f_\epsilon(x, t) := \int_{s \in \mathbb{R} } f(x, s) \psi_\epsilon (s-t) \diff s = \int_{s \in \mathbb{R} } f(x, t+s) \psi_\epsilon (s) \diff s.
		\end{align}
		By the middle expression, for any fixed $x$, $f_\epsilon(x, \cdot) $ is a smooth function in the time variable with time derivatives of all orders being locally bounded in $\Lambda \times [0, T-\epsilon)$. On the other hand, the last expression can be seen as adding random delay with density $ \psi_\epsilon$ to the time when $f(X_t, t)$ is launched (due to the time-homogeneity of the It\^{o} SDE for $X_t$). Since $M_t$ in~\eqref{eq:def of mgale observable} was assumed a local martingale for any launching point and time, we conclude that also
		\begin{align*}
			M^\epsilon_t = \gamma_{t_0, t} f_\epsilon (X_t, t) +  H^\epsilon_{t_0, t}
		\end{align*}
		is a martingale for $t \in [t_0, T - \epsilon]$, where $$H^\epsilon_{t_0, t} = \int_{s= t_0}^t \gamma_{t_0, s} h_\epsilon (X_s, s) \diff s
		\quad \text{where} \quad h_\epsilon(x, t) := \int_{s \in \mathbb{R} } h(x, s) \psi_\epsilon (s-t) \diff s;$$
		indeed, this is just an average over the random launching delay of the original local martingales.
		
		For technical reasons, we shall however always launch $X_t$ at the time $t=0$ and instead add an offset $t_0$ to the arguments of $f_\epsilon$ and $h_\epsilon$. In other words, we define a collection of local martingales, $M^{\epsilon, t_0}$ indexed by the mollification parameter $\epsilon$ and a time offset $t_0$, by
		\begin{align*}
			& M^{\epsilon, t_0}_t = \gamma_{ t} f_\epsilon (X_t, t + t_0) +  H^{\epsilon, t_0}_{ t}, \qquad t \in [0, T-\epsilon - t_0],\\
			& \text{where} \qquad H^{\epsilon, t_0}_{ t} :=
			\int_{s= 0}^t \gamma_{s} h_\epsilon (X_s, s+t_0) \diff s.
		\end{align*}
		
		
		\textbf{Step 2: translate to the slowed-down process.} 
		Recall from Lemma~\ref{lem:martingale time change} that now
		$$\hat{M}_s := M^{\epsilon, t_0}_{t = \beta(s)} = f_\epsilon (\hat{X}_s, \beta(s) + t_0) \hat{\gamma}_s + \hat{ H}^{t_0}_s, \qquad 0 \leq s < T- \epsilon - t_0,$$
		is a martingale (in the filtration $\mathcal{G}_s$, which however will not play any r\^{o}le in this proof). Here $\hat{X}$ is defined in~\eqref{eq:SDE def on hat-X}, $\beta(s)$ in~\eqref{eq:def of time-change}, $\hat{\gamma}_s$ in~\eqref{eq:def of time-changed gamma} and 
		\begin{align}
			\label{eq:def of time-changed H 2}
			\hat{H}^{t_0}_{ s} &=: H^{\epsilon, t_0}_{ t=\beta(s)}= \int_{r = 0}^{\beta(s)} \gamma_{r} h_\epsilon (X_r, r+t_0)  \diff r \\ 
			\nonumber
			& = \int_{ \zeta = 0}^{s} \hat{\gamma}_{\zeta} h_\epsilon (\hat{X}_\zeta, \beta(\zeta)+t_0)  \vartheta( \hat{X}_\zeta )^2  \diff \zeta . 
		\end{align}

		\textbf{Step 3: continuity of $f_\epsilon$.} 
		
		\begin{lemma}
			\label{lem:f_eps is continuous}
			The function $f_\epsilon:\Lambda \times [0, T-\epsilon] \to \mathbb{R}$ defined by~\eqref{eq:f_eps} is continuous. 
		\end{lemma}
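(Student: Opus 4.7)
The plan is to establish continuity of $f_\epsilon$ at an arbitrary point $(x_0, t_0)$ by arguing separately for continuity in the time variable (uniformly in $x$ on compacta) and in the spatial variable (for fixed $t$), then concluding joint continuity via the standard $\epsilon/2$ split $|f_\epsilon(x_n, t_n) - f_\epsilon(x_0, t_0)| \leq |f_\epsilon(x_n, t_n) - f_\epsilon(x_n, t_0)| + |f_\epsilon(x_n, t_0) - f_\epsilon(x_0, t_0)|$. Continuity in $t$ is standard mollifier book-keeping: using $\psi_\epsilon$ smooth with compact support, $f$ locally bounded, and continuity of translation in $L^1(\mathbb{R})$, one gets
\begin{align*}
|f_\epsilon(x, t) - f_\epsilon(x, t')| \leq \|f\|_{L^\infty(K \times I_\epsilon)} \cdot \|\psi_\epsilon(\cdot - t) - \psi_\epsilon(\cdot - t')\|_{L^1} \xrightarrow[t' \to t]{} 0
\end{align*}
uniformly in $x$ over any compact $K \subset \Lambda$.

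The substantive step is continuity in $x$ at fixed $t_0$, since $f$ is only assumed Borel measurable in space. The plan is to exploit the martingale observable property via the slowed-down process: for each $u \in [0, \epsilon]$ and $s > 0$, Lemma~\ref{lem:martingale time change} applied to $f$ itself (not $f_\epsilon$) with time offset $t_0 + u$ gives that $\hat{M}^{(u)}_s := f(\hat{X}_s, \beta(s) + t_0 + u) \hat{\gamma}_s + \hat{H}^{(t_0+u)}_s$ is a bounded martingale (since $\hat{X}$ is trapped in the bounded set $\Theta$ where $f$ is bounded, and the exponential and integral factors are bounded). Consequently $f(x, t_0 + u) = \mathbb{E}_x[\hat{M}^{(u)}_s]$; integrating over $u$ against $\psi_\epsilon(u)\,du$ and applying Fubini yields
\begin{align*}
f_\epsilon(x, t_0) = \mathbb{E}_x\bigl[\hat{\gamma}_s f_\epsilon(\hat{X}_s, \beta(s) + t_0) + \hat{H}^{\epsilon, t_0}_s\bigr].
\end{align*}
To extract $x$-continuity from this representation, I would augment the slowed-down diffusion by appending the drift-only coordinates $\beta$ and $\log \hat{\gamma}$, obtaining a higher-dimensional diffusion whose augmented (H)-criterion can be verified from the original one together with generic non-degeneracy of $\vartheta$ and $g$ in $\mathcal{U}_q$-directions. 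Corollary~\ref{cor:strong Kolmogorov forward PDE for the slowed-down process} then furnishes a joint density of $(\hat{X}_s, \beta(s), \hat{\gamma}_s)$ starting from $x$ that is smooth in $x$, and the expectation above becomes an integral of the bounded measurable integrand against this smooth kernel, continuous (indeed smooth) in $x$ by dominated convergence. A conceptually cleaner alternative is to apply the distributional Kolmogorov PDE for the $\hat{\gamma}$-weighted measure $\hat{\mathfrak{v}}$ (the slowed-down analog of the remark following Proposition~\ref{prop:killed}), whose hypoellipticity yields a density smooth in $x$ directly.

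The main obstacle I anticipate is the verification of the augmented (H)-condition and the handling of degenerate cases: in the interior of $\Theta$ where $\vartheta = 1$, the coordinate $\beta(s) = s$ is deterministic, and if moreover $g$ is $\mathcal{U}_q$-constant (forcing $g$ to be constant by Hörmander and connectedness), $\log \hat{\gamma}_s$ is also deterministic, so the augmented joint density concentrates on a lower-dimensional subset rather than being absolutely continuous. In such situations one must pull out the deterministic factors and apply the smooth particle density $\hat{\rho}_x$ of $\hat{X}_s$ alone (or its $\hat{\gamma}$-weighted variant on the event that $\hat{X}$ stays in the interior), which still yields the required continuity of the expectation in $x$.
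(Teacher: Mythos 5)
Your skeleton is the right one and matches the paper's: split into time-continuity (uniform on compacta, via mollifier book-keeping) and space-continuity at fixed $t$, and obtain the latter from the martingale representation $f_\epsilon(w,t') = \mathbb{E}_w[\hat{\gamma}_s f_\epsilon(\hat{X}_s,\beta(s)+t') + \hat{H}^{t'}_s]$ together with the smooth particle density of the slowed-down process. However, the central device you propose for the spatial step --- augmenting the diffusion by the coordinates $\beta$ and $\log\hat{\gamma}$ and invoking a joint smooth density --- does not work, and not only in the ``degenerate cases'' you flag at the end. The fields $\hat{\mathcal{U}}_q$ have zero component in the $\partial_\beta$ direction, and every iterated bracket picks up a $\partial_\beta$ component only through derivatives of $\vartheta^2$; on the interior of $\vartheta^{-1}(1)$, where the launching points live and where the process stays with probability $1-o_s(1)$, these all vanish, so the augmented H\"ormander condition fails identically there and the joint law of $(\hat{X}_s,\beta(s))$ has a singular component on the slice $\{\beta=s\}$. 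Worse, the drift of the $\log\hat{\gamma}$ coordinate is $g(\hat{X})\vartheta(\hat{X})^2$, and in the setting of the lemma $g$ is only \emph{continuous}, so the augmented SDE does not even have smooth coefficients and H\"ormander/Malliavin theory is not applicable to it; the same objection rules out your ``cleaner alternative'' via the $\hat{\gamma}$-weighted distribution $\mathfrak{v}$, whose hypoellipticity requires $g$ smooth. Moreover neither variant addresses the path-dependence sitting \emph{inside} $f_\epsilon(\hat{X}_s,\beta(s)+t')$ and inside $\hat{H}^{t'}_s$, which no density for the terminal value $(\hat{X}_s,\beta(s),\hat{\gamma}_s)$ can absorb.

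The fallback you mention --- ``pull out the deterministic factors and use $\hat{\rho}_w$ alone'' --- is indeed the correct route and is what the paper does, but it is left unexecuted, and the missing steps are exactly where the work is. Concretely one needs: (i) the event $E(s,4r)$ that $\hat{X}$ stays in $B(x,4r)$ up to time $s$, on which $\beta(s)=s$ and $|\hat{\gamma}_s f_\epsilon(\hat{X}_s,\beta(s)+t')+\hat{H}^{t'}_s - f_\epsilon(\hat{X}_s,t'+s)| \leq C_2 s$ by local boundedness of $f,g,h$; (ii) the estimate $\mathbb{P}_w[E(s,4r)^c] = o_s(1)$ \emph{uniformly over} $w\in B(x,r)$, via a coupling of the Lipschitz SDEs started from nearby points, giving $|f_\epsilon(w,t') - \mathbb{E}_w[f_\epsilon(\hat{X}_s,t'+s)]| \leq C_2 s + o_s(1)$ uniformly in $w$; (iii) for \emph{fixed} $s>0$, the gradient bound $|\nabla_w \int f_\epsilon(y,t'+s)\hat{\rho}_w(y,s)\,\diff^n y| \leq C_3(s)$ from smoothness of $\hat{\rho}$ (note $C_3(s)$ blows up as $s\downarrow 0$, so the order of limits matters); and (iv) the conclusion by first fixing $s$ so small that $2C_2 s + o_s(1) < \varepsilon/2$ and only then shrinking $|x-x'|$ and $|t-t'|$. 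Without (ii)--(iv) the argument is not complete: conditioning ``on the event that $\hat{X}$ stays in the interior'', as you suggest, changes the law and still leaves the complementary event and the uniformity in the launching point to be controlled.
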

		
		The proof of the lemma is postponed into Section~\ref{subsec:technical stuff}.

		\begin{remark}
			\label{remark:continuity}
			In the time-invariant case $f(x, t) = f(x)$, we have $f_\epsilon = f$, and as such $f$ is continuous, cf.~\eqref{eq:f_simple-case}.
		\end{remark}
		
		\textbf{Step 4: encoding test functions as random launching time and point.} The content of steps 4--6 is to prove that $f_\epsilon$ is a weak solution to the almost the same PDE on the time interval $(0, T-\epsilon)$. Namely, for any smooth and compactly-supported $\varphi: \Lambda \times (0, T-\epsilon) \to \mathbb{R}$, one has
		\begin{align}
			\int_{v \in \Lambda} \int_{t = 0}^{T- \epsilon} f_\epsilon (v, t) \left( \mathcal{G}_v^* - \partial_{t} + g(v) \right) \varphi (v, t) + h_\epsilon(v, t)  \varphi (v, t)  \diff t \diff^n v = 0.\label{eq:weak PDE with epsilons}
		\end{align}
		
		For the proof,
		without loss of generality, we may assume here that $\varphi$ is a probability density function.
		Then, we let $(w, t_0) \in \Lambda \times [0, T-\epsilon]$ be a pair of random launching point and time offset with the density $\varphi$. We assume that $(w, t_0)$ is $\mathcal{F}_0$-measurable and we generate $X$ via~\eqref{eq:main process of interest} and Brownian motions independent of $(w, t_0)$.  We also assume that the slowing-down function $\vartheta$ in~\eqref{eq:SDE def on hat-X} is such that the supports of $\varphi(\cdot, t)$ belong to the $\delta$-interior of the preimage $\vartheta^{-1}(1)$, with the same $\delta > 0$ for all $t$, and we define the slowed-down process $\hat{X}_s$ through it.
		
		With the martingale property of $\hat{M}^{t_0}_s$ for $0 \leq s \leq T- \epsilon - t_0$ (step 2), choose $s$ small enough (compared to the compact support of $\varphi$) to be on this interval irrespectively of the random $t_0$. Then (also using $\hat{\gamma}_{0}=1$ and $\hat{ H}^{t_0}_{0} = 0$), we obtain
		\begin{align}
			0 &= \mathbb{E} \Big[\frac{\hat{M}^{t_0}_s - \hat{M}^{t_0}_{0}}{s} \Big] \nonumber
			\\
			& = \mathbb{E} \Big[ \frac{\hat{\gamma}_{s} - \hat{\gamma}_{0}}{s} f_\epsilon (\hat{X}_{ s},  \beta( s)+t_0) \Big]\nonumber
			\\
			&+ \mathbb{E} \Big[ \frac{f_\epsilon (\hat{X}_{ s}, \beta( s) + t_0) - f_\epsilon (\hat{X}_{0}, t_0)}{s} \Big] + \mathbb{E} \Big[ \frac{ \hat{ H}^{t_0}_{s} }{s} \Big]\label{eq:three terms}.
		\end{align}
		The idea is to pass to the limit $s \downarrow 0$. The first and third term are then readily handled by the a.s. continuity of the paths $s \mapsto\hat{X}_{s}$. Indeed, as $s \to 0$, we have $(\hat{\gamma}_{s} - \hat{\gamma}_{0})/s \to g(\hat{X}_{0}) \vartheta(\hat{X}_{0})^2 = g(\hat{X}_{0}) $, and $f_\epsilon (\hat{X}_{ s }, \beta( s) + t_0) \to f_\epsilon (\hat{X}_{0},t_0)$ (by step 3), and $\hat{ H}^{t_0}_{s} / s \to \hat{\gamma}_{0} h_\epsilon (\hat{X}_{0}, t_0) \vartheta(\hat{X}_{0})^2 = h_\epsilon (\hat{X}_{0}, t_0)$, all almost surely. Since all the functions involved are continuous and we are restricted onto a compact set by the slowing-down, dominated convergence gives us
		\begin{align*}
			\mathbb{E} \Big[   \frac{\hat{\gamma}_{s} - \hat{\gamma}_{0}}{s} f_\epsilon (\hat{X}_{ s},  \beta( s)+t_0) \Big] 
			&\stackrel{s \downarrow 0 }{\longrightarrow} \mathbb{E} [  g(\hat{X}_{0}) f_\epsilon (\hat{X}_{0}, t_0) ] \nonumber \\
			&= \int_{t=0}^{T-\epsilon} \int_{v \in \Lambda} g(v) f_\epsilon (v, t) \varphi(v, t )   \diff^n v  \diff t \\
			\text{and} \quad
			\mathbb{E} \Big[ \frac{ \hat{ H}^{t_0}_{s} }{s} \Big] 
			\stackrel{s \downarrow 0 }{\longrightarrow} \mathbb{E} [  h_\epsilon (\hat{X}_{0}, t_0) ] & = \int_{t=0}^{T-\epsilon} \int_{v \in \Lambda} h_\epsilon (v, t)  \varphi(v, t ) \diff^n v \diff t.
		\end{align*}
		As such, it remains to analyze the second term in~\eqref{eq:three terms}. 
		
		\textbf{Step 5: Removing the time change at small times $s$.}
		The following lemma shows that we can replace the path-dependent function $\beta(s)$ by $s$ in the second term in~\eqref{eq:three terms}. The proof is postponed to Section~\ref{subsec:technical stuff}.

		
		\begin{lemma}
			\label{lem:remove time-change}
			For any fixed compactly-supported launching-point density $\varphi$, we have 
			\begin{align*}
				\mathbb{E}[ f_\epsilon(\hat{X}_{s},\beta( s)+t_0)] =  \mathbb{E}[ f_\epsilon(\hat{X}_{s}, s+t_0)] + o(s), \quad \text{as } s \to 0.
			\end{align*}
		\end{lemma}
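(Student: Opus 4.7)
The plan is to bound the difference $|\mathbb{E}[f_\epsilon(\hat{X}_s, \beta(s)+t_0) - f_\epsilon(\hat{X}_s, s+t_0)]|$ by combining the smoothness of $f_\epsilon$ in its time variable with the fact that, for short times $s$, the slowed-down process stays in the region where $\vartheta \equiv 1$, so that $\beta(s) \approx s$. Concretely, I would first prove the pointwise bound
\begin{align*}
\bigl|f_\epsilon(\hat{X}_s, \beta(s)+t_0) - f_\epsilon(\hat{X}_s, s+t_0)\bigr| \;\leq\; C\,\bigl(s - \beta(s)\bigr),
\end{align*}
and then show that $\mathbb{E}[s-\beta(s)] = o(s)$; taking expectations and combining the two then gives the claim.

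For the first bound, I would invoke the observation from Step~1 that, for each fixed $x$, $f_\epsilon(x,\cdot)$ is smooth with time derivatives that are locally bounded on $\Lambda \times [0, T-\epsilon)$. Since $\hat{X}_s$ is trapped inside the compact support of $\vartheta$, and since $t_0, s$ lie in bounded ranges, there is a uniform bound $|\partial_t f_\epsilon| \leq C$ on the relevant region. As $\beta(s) \in [0, s]$, the mean value theorem then yields the above pointwise estimate.

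For the second bound, I would unpack the definition of the time change to write
\begin{align*}
\mathbb{E}[s - \beta(s)] \;=\; \int_{\zeta=0}^{s} \mathbb{E}\bigl[1 - \vartheta(\hat{X}_\zeta)^2\bigr] \, \diff \zeta.
\end{align*}
The key point, engineered in Step~4, is that the support of the launching density $\varphi$ lies in the $\delta$-interior of $\vartheta^{-1}(1)$, so $\vartheta(\hat{X}_0) = 1$ almost surely. By continuity of the paths $\zeta \mapsto \hat{X}_\zeta$ and dominated convergence (with the bound $0 \leq 1 - \vartheta^2 \leq 1$), the integrand $\mathbb{E}[1 - \vartheta(\hat{X}_\zeta)^2]$ tends to $0$ as $\zeta \downarrow 0$. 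Consequently the Ces\`aro-type average $\tfrac{1}{s}\int_{0}^{s} \mathbb{E}[1 - \vartheta(\hat{X}_\zeta)^2] \, \diff \zeta$ vanishes as $s \downarrow 0$, which is exactly $\mathbb{E}[s - \beta(s)] = o(s)$.

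The main obstacle is essentially bookkeeping: one must verify that the uniform bound on $\partial_t f_\epsilon$ is indeed available on the bounded region actually visited by the pairs $(\hat{X}_s, \beta(s)+t_0)$ and $(\hat{X}_s, s+t_0)$ for small $s$, and that the support assumption on $\varphi$ really forces $\vartheta(\hat{X}_0) = 1$ almost surely as claimed. Both checks follow from the compact-support reductions already set up in Steps~1 and~4, so beyond these verifications the argument is straightforward.
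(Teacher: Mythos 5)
Your argument is correct, but it proceeds along a genuinely different route from the paper's. The paper sets $A := \vartheta^{-1}(1)$ and decomposes both expectations on the event $\{\tau_{A^c} \leq s\}$ and its complement: on the complement one has $\beta(s) = s$ \emph{exactly}, so the two integrands coincide there and the whole difference is controlled by $\mathbb{P}[\tau_{A^c}\leq s]$ times a local bound on $f_\epsilon$. That probability is then shown to be $O(s^{p/2})$ for every $p$, via the moment bound $\mathbb{E}[|\hat{X}_s-\hat{X}_0|^p]\leq C_p s^{p/2}$ (It\^o isometry and Burkholder--Davis--Gundy), Kolmogorov--Chentsov, a moment bound on the random H\"older constant, and Markov's inequality against the $\delta$-gap between $\mathrm{supp}(\varphi)$ and $A^c$. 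You instead keep the event structure trivial and trade it for regularity of $f_\epsilon$: a mean-value bound $|f_\epsilon(\hat X_s,\beta(s)+t_0)-f_\epsilon(\hat X_s,s+t_0)|\leq C(s-\beta(s))$, followed by $\mathbb{E}[s-\beta(s)]=\int_0^s\mathbb{E}[1-\vartheta(\hat X_\zeta)^2]\,\diff\zeta=o(s)$ via path continuity, $\vartheta(\hat X_0)=1$ a.s., bounded convergence and a Ces\`aro average. Your version is softer (no BDG, no H\"older-constant moment estimates) but uses the time-differentiability of $f_\epsilon$ with locally bounded derivative --- which is indeed available from the mollification in Step~1, and the uniform bound you need holds on the compact set $\overline{\Theta}\times[0,T']$ actually visited --- whereas the paper's proof needs only local boundedness of $f_\epsilon$. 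The price is quantitative: you obtain exactly the $o(s)$ stated in the lemma, while the paper's argument yields $o(s^{p})$ for any $p$, as recorded in the remark following its proof (though only $o(s)$ is used downstream). Both proofs rely in the same essential way on the support of $\varphi$ sitting in the $\delta$-interior of $\vartheta^{-1}(1)$, so your two ``bookkeeping'' checks do go through.
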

		
		In particular, the second term of~\eqref{eq:three terms} becomes, as $s \downarrow 0$,
		\begin{align*}
			\mathbb{E} \Big[ \frac{f_\epsilon (\hat{X}_{ s}, \beta( s)+t_0) - f_\epsilon (\hat{X}_{0}, t_0)}{s} \Big] = \mathbb{E} \Big[ \frac{f_\epsilon (\hat{X}_{ s }, t_0 + s) - f_\epsilon (\hat{X}_{0}, t_0)}{s} \Big] + o(1).
		\end{align*}
		
		\textbf{Step 6: explicit derivations.} Now that the path-dependence has been removed in Step 5, we will analyze the right-derivative of the expectation $E[ f_\epsilon(\hat{X}_{s},t_0+s)]$ at $s=0$ via the particle density $\hat{\rho}$. By the Intermediate value theorem, if a continuous function $\mathbb{R}_{\geq 0} \to 0$ (in this case $s \mapsto E[ f_\epsilon(\hat{X}_{s},t_0+s)]$) is differentiable on $\mathbb{R}_{>0}$ and its derivative has a right limit at $0$, then the function is right-differentiable with continuous right derivative at $0$. Hence, we fix for a moment $s>0$ and study (two-sided) derivatives.
		We have
		\begin{align*}
			\mathbb{E}[f_\epsilon (\hat{X}_{s}, t_0 + s)] 
			&= \int_{x \in \Lambda} \int_{t=0}^{T-\epsilon} \int_{v \in \Lambda} \big( \hat\rho_v(x,s)\varphi(v,t)f_\epsilon(x,t+s) \big) \diff^n v \diff t \diff^n x.
		\end{align*}
		now we claim that $\partial_s$ can be taken inside the integral. For this, note that the support of $\hat\rho_w(x,s)\varphi(w,t)$ is (for all $s > 0$) contained in $(x, w, t) \subset \overline{\Theta} \times supp(\varphi)$ and the time derivatives of $f_\epsilon$ are locally bounded, while $\hat\rho$ is smooth. Hence,  Leibniz' criterion for differentiation under integral applies, and we get
		\begin{align*}
			\partial_s  \mathbb{E}[f_\epsilon (\hat{X}_{s}, t + s)] 
			&=  \int_{x \in \Lambda} \int_{t=0}^{T-\epsilon} \int_{v \in \Lambda} \partial_s \hat\rho_v(x,s)\varphi(v,t)f_\epsilon(x,t+s) \\
			& \qquad + \hat\rho_v(x,s)\varphi(v,t)\partial_s f_\epsilon(x,t+s) \diff^n v \diff t \diff^n x \\
			&= \int_{x \in \Lambda} \int_{t=0}^{T-\epsilon} \int_{v \in \Lambda}  \mathcal{G}_v \hat\rho_v(x,s)\varphi(v,t)f_\epsilon(x,t+s)\\
			&\qquad + \hat\rho_v(x,s) \varphi(v,t) \partial_t f_\epsilon(x,t+s)  \diff^n v \diff t \diff^n x.
		\end{align*}
		Above we used Kolmogorov's backward equation and replaced $\hat{\mathcal{G}}_v$ by ${\mathcal{G}}_v$ since $\varphi(v,t)$ is supported on the $\epsilon$ interior of $\Theta$, where $\hat{\mathcal{G}}$ coincides with $\mathcal{G}$.
		Next integration by parts in $v$-variable for the first term and $t$ for the second gives
		\begin{align*}
			&\partial_s  \mathbb{E}[f_\epsilon (\hat{X}_{s}, t + s)]\\
			&=
			\int_{v \in \Lambda} \int_{t=0}^{T-\epsilon} \big( ( \mathcal{G}_v^*  - \partial_t )\varphi(v,t) \big) \Big(  \int_{x \in \Lambda}   \hat\rho_v(x,s) f_\epsilon(x,t+s)  \diff^n x \Big) \diff t  \diff^n v 
			\\
			&=
			\int_{v \in \Lambda} \int_{t=0}^{T-\epsilon} \big( ( \mathcal{G}_v^*  - \partial_t )\varphi(v,t) \big) \mathbb{E}_{v} [f_\epsilon (\hat{X}_{s}, t+s)] \diff t  \diff^n v. 
		\end{align*}
		This is our expression for the $s$-derivative.
		
		Letting  now $s \downarrow 0$, the expectation tends to $f_\epsilon(v,t) $ since $\hat{X}_{s} \to \hat{X}_0$ weakly and $f_\epsilon$ is continuous by step 3. The limit $s \downarrow 0$ can be taken inside the integral due to Dominated convergence as $\varphi$ is compactly-supported and $f_\epsilon$ is bounded. We obtain
		\begin{align*}
			\partial_s  \mathbb{E}[f_\epsilon (\hat{X}_{s}, t + s)]  \stackrel{s \to 0}{ \longrightarrow } \int_{v \in \Lambda} \int_{t=0}^{T-\epsilon} f_\epsilon(v,t)\big( \mathcal{G}^*_v -\partial_t \big) \varphi(v,t) \diff t  \diff^n v.
		\end{align*}
		
		\textbf{Step 7: Conclusion.}
		Steps 4--6 lead to~\eqref{eq:weak PDE with epsilons}.
		A simple convolution argument shows that for any continuous $\chi$ we have
		\begin{align*}
			\int_{t= 0}^{T-\epsilon} f_\epsilon(x,t) \chi (t) \diff t \to \int_{t= 0}^{T} f(x,t) \chi (t) dt 
		\end{align*}
		as $\epsilon \to 0$, and similarly for $h_\epsilon$ and $h$. The limit $\epsilon \to 0$ can be taken inside the spatial integral in~\eqref{eq:weak PDE with epsilons} by dominated convergence and the fact that we are dealing with compactly supported functions. Thus passing to the limit $\epsilon\to0$ in~\eqref{eq:weak PDE with epsilons}, we obtain~\eqref{eq:weak main PDE} and conclude the proof of Theorem~\ref{thm:mgale observables solve PDEs}.
	\end{proof}

	\subsection{Proofs of Theorems \ref{thm:Feynman--Kac} and \ref{thm:X-harmonic FK}}
	\label{subsec:main-proof2}
	The strategies of both proofs are the same: we check that the solution candidate $f$ given in~\eqref{eq: Feynman--Kac solution formula} or~\eqref{eq:X-harmonic FK formula} is a martingale observable. Then it is a PDE solution by Theorem~\ref{thm:mgale observables solve PDEs}, and as such it suffices to verify the boundary conditions. The first fact is recorded in the following lemma. 
	
	\begin{lemma}
		\label{lem:Feynman--Kac mgale obsesrvable}
		Assume that $f$ given in~\eqref{eq: Feynman--Kac solution formula} (resp.~\eqref{eq:X-harmonic FK formula}) is well-defined. Then, given any $t_0$ and $X_{t_0} = x$, the process $M_t$ defined in~\eqref{eq:def of mgale observable}
		is a martingale up to the stopping time $\tau = \tau_{\partial \Lambda} \wedge T$ (up to $\tau = \tau_{\partial \Lambda} $, respectively).
	\end{lemma}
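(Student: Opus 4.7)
The plan is to reduce the martingale identity to a single strong Markov application, using two elementary algebraic decompositions of $\gamma$ and $H$. For $t_0 \le s \le t$, the definitions in~\eqref{eq:def of mgale observable} give the multiplicative/additive identities
\begin{align*}
\gamma_{t_0, t} = \gamma_{t_0, s}\,\gamma_{s, t}, \qquad H_{t_0, t} = H_{t_0, s} + \gamma_{t_0, s}\,H_{s, t},
\end{align*}
whence
\begin{align*}
M_t = H_{t_0, s} + \gamma_{t_0, s}\bigl[\gamma_{s, t} f(X_t, t) + H_{s, t}\bigr].
\end{align*}
Since $\gamma_{t_0, s}$ and $H_{t_0, s}$ are $\mathcal{F}_s$-measurable, the desired martingale identity $\mathbb{E}[M_t \mid \mathcal{F}_s] = M_s$ reduces to verifying
\begin{align*}
\mathbb{E}\bigl[\gamma_{s, t} f(X_t, t) + H_{s, t} \bigm| \mathcal{F}_s\bigr] = f(X_s, s).
\end{align*}

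Next I would invoke time-homogeneity and the strong Markov property of $X$. Applied at time $t$, and combined with the very definition of $f$ in~\eqref{eq: Feynman--Kac solution formula} (or~\eqref{eq:X-harmonic FK formula}), this yields
\begin{align*}
f(X_t, t) = \mathbb{E}\bigl[\gamma_{t, \tau} \psi(X_\tau, \tau) + H_{t, \tau} \bigm| \mathcal{F}_t\bigr],
\end{align*}
where $\tau$ is the stopping time appearing in the definition, interpreted along the ongoing path. Multiplying by $\gamma_{s, t}$, adding $H_{s, t}$, and using $\gamma_{s, t}\gamma_{t, \tau} = \gamma_{s, \tau}$ together with $\gamma_{s, t} H_{t, \tau} = H_{s, \tau} - H_{s, t}$ turns this into
\begin{align*}
\gamma_{s, t} f(X_t, t) + H_{s, t} = \mathbb{E}\bigl[\gamma_{s, \tau}\psi(X_\tau, \tau) + H_{s, \tau} \bigm| \mathcal{F}_t\bigr].
\end{align*}
Taking $\mathbb{E}[\,\cdot\, |\, \mathcal{F}_s]$, the tower property and the strong Markov property at $s$ collapse the right-hand side to $f(X_s, s)$, which is exactly the identity needed.

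The remaining issue is to promote this to a genuine martingale on $[t_0, \tau]$. This is where ``well-definedness'' of $f$ is used: it supplies the integrability of $\gamma_{t_0, \tau}\psi(X_\tau, \tau) + H_{t_0, \tau}$ from every launching point, which via the representation above dominates $M_t$ for $t \in [t_0, \tau]$ by a conditional expectation of an integrable random variable. In the bounded setting of Theorem~\ref{thm:Feynman--Kac} this dominating variable is in fact bounded, so the process $(M_t)_{t \in [t_0, \tau]}$ is a uniformly integrable martingale; in the setting of Theorem~\ref{thm:X-harmonic FK} I would first work on $[t_0, \tau \wedge n]$ and pass to the limit by dominated convergence, with the integrable bound supplied by the same representation.

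The main potential obstacle is book-keeping around the stopping time $\tau$ under different launches: one must make sure that the strong Markov property applied at the intermediate time yields precisely the exit time of the continued process and not a fresh independent copy. This is resolved by the time-homogeneity of the SDE~\eqref{eq:main process of interest} and the fact that $\tau = \tau_{\partial \Lambda} \wedge T$ depends only on the path of $X$ from time $s$ onward, so the conditional law under $\mathcal{F}_s$ coincides with launching afresh at $(X_s, s)$. Modulo this observation, the proof is essentially an exercise in the tower property.
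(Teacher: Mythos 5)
Your argument is correct and is essentially the paper's proof: both rest on the decompositions $\gamma_{t_0,\tau}=\gamma_{t_0,t}\,\gamma_{t,\tau}$ and $H_{t_0,\tau}=H_{t_0,t}+\gamma_{t_0,t}H_{t,\tau}$ together with the Markov property, which identify $M_t$ as $\mathbb{E}[\gamma_{t_0,\tau}\psi(X_\tau,\tau)+H_{t_0,\tau}\mid\mathcal{F}_t]$. The paper simply stops at that identification --- a conditional-expectation process of a single integrable random variable is automatically a (uniformly integrable) martingale --- so your separate tower-property verification for pairs $s<t$ and the localization on $[t_0,\tau\wedge n]$ are not needed, though they are harmless.
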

	
	\begin{proof}
		The process~\eqref{eq:def of mgale observable} is clearly adapted. To check integrability and the conditional expectation condition, note first that $\gamma_{\tau} = \gamma_{t}\gamma_{t, \tau}$ and $H_{\tau} = H_t + \gamma_t H_{t, \tau}$. We now compute the conditional expectation of the random variable $Y:= \gamma_{\tau} \psi(X_\tau, \tau) + H_{\tau}$ whose expected value defines $f(x,0)$ (and hence $Y$ is integrable by assumption). We obtain
		\begin{align*}
			\mathbb{E} [ Y \; |\; \mathcal{F}_t]
			&= \mathbb{E} [ \gamma_{\tau} \psi(X_\tau, \tau) + H_{\tau} \; |\; \mathcal{F}_t]
			= \mathbb{E} [\gamma_t \gamma_{t,\tau} \psi(X_\tau, \tau) + H_t + \gamma_t H_{t,\tau} \; |\; \mathcal{F}_t] \\
			&= \gamma_t \mathbb{E} [ \gamma_{t,\tau} \psi(X_\tau, \tau) + H_{t,\tau} \; |\; \mathcal{F}_t] + H_t = M_t.
		\end{align*}
		Hence, being a conditional expectation process, $M_t$ is a martingale.
	\end{proof}
	
	Regarding the boundary values, we record the following two results whose proofs are postponed to Section~\ref{subsec:technical stuff}. In both lemmas below, we assume that the solution candidate $f$ given in~\eqref{eq: Feynman--Kac solution formula} or~\eqref{eq:X-harmonic FK formula} is well-defined, $\Lambda$ has $X$-regular boundary, $\psi, h$, and $g$ are  bounded, and all three functions are continuous in their domains of definition.
	
	\begin{lemma}
		\label{lem:bdary values of FK observables}
		Under the above assumptions,
		the Feynman--Kac solution candidate $f$ in~\eqref{eq: Feynman--Kac solution formula} has the boundary limits $\lim_{(w, s ) \to (x, t)} f(w, s) = \psi (x, t) $ for all $(x, t)$ on the cylinder boundary. 
	\end{lemma}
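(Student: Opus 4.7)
The plan is to show $f(w,s)\to\psi(x,t)$ as $(w,s)\to(x,t)$ along $\Lambda\times[0,T)$ by proving the two \emph{key convergences} $\tau-s\to 0$ and $X_\tau\to x$ in $\mathbb{P}_w$-probability, and then concluding via dominated convergence. By the time-homogeneity of $X$, we may set $\tilde X_u := X_{s+u}$ with $\tilde X_0 = w$, and write $\tilde\tau := \tau-s = \tilde\tau_{\partial\Lambda}\wedge(T-s)$, where $\tilde\tau_{\partial\Lambda}$ is the hitting time of $\partial\Lambda$ by $\tilde X$. Then
\begin{align*}
f(w,s) = \mathbb{E}_w\!\Big[\exp\!\Big(\!\int_0^{\tilde\tau}\! g(\tilde X_u)\,du\Big)\psi(\tilde X_{\tilde\tau},s+\tilde\tau) + \!\int_0^{\tilde\tau}\!\exp\!\Big(\!\int_0^u\! g(\tilde X_v)\,dv\Big)h(\tilde X_u,s+u)\,du\Big].
\end{align*}

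I split the argument into two cases based on the location of $(x,t)$ on the cylinder boundary $\mathcal{C}$. If $(x,t)\in\partial\Lambda\times(0,T]$, then the $X$-regularity assumption gives, for every $\delta>0$, $\mathbb{P}_w[\tilde\tau_{\partial\Lambda}<\delta,\,|\tilde X_{\tilde\tau_{\partial\Lambda}}-x|<\delta]\to 1$ as $w\to x$. When $t<T$, choosing $\delta<T-t$ and taking $s$ close to $t$ forces $\tilde\tau=\tilde\tau_{\partial\Lambda}<\delta$ on a set of probability tending to one, and $\tilde X_{\tilde\tau}$ lies within $\delta$ of $x$ there. When $t=T$, we additionally have $T-s\to 0$, and on the small-probability event $\{\tilde\tau=T-s\}$, path-continuity of $\tilde X$ on $[0,T-s]$ still gives $\tilde X_{\tilde\tau}$ close to $\tilde X_0=w$, which is near $x$. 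If instead $(x,T)\in\Lambda\times\{T\}$, then $\tilde\tau\leq T-s\to 0$ deterministically, and the path continuity of $\tilde X$ yields $\tilde X_{\tilde\tau}\to\tilde X_0=w\to x$.

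With both convergences in hand, I would finish by bounding the integrand in the expression for $f(w,s)$ above by the constant $\big(\|\psi\|_\infty+\|h\|_\infty T\big)\,e^{\|g\|_\infty T}$, so that dominated convergence applies. Continuity and boundedness of $g$ yield $\exp\!\big(\int_0^{\tilde\tau}g(\tilde X_u)\,du\big)\to 1$; continuity of $\psi$ on $\mathcal{C}$ together with $\tilde X_{\tilde\tau}\to x$ and $s+\tilde\tau\to t$ yields $\psi(\tilde X_{\tilde\tau},s+\tilde\tau)\to\psi(x,t)$; and the $H$-integral vanishes in the limit as an integration of a bounded integrand over an interval $[0,\tilde\tau]$ of length tending to zero. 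Summing gives $f(w,s)\to\psi(x,t)$.

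The main obstacle is the corner $\partial\Lambda\times\{T\}$ of $\mathcal{C}$, where the two mechanisms (boundary regularity sending $\tilde\tau_{\partial\Lambda}\to 0$ and the time-window $T-s\to 0$) must be combined coherently. The hypothesis that $X$ is continuous up to and including $\tau_{\partial\Lambda}$ is essential here, since it lets us locate $\tilde X_{\tilde\tau}$ near $x$ regardless of which of the two times is achieved first, and continuity of $\psi$ on the closed set $\mathcal{C}$ then removes any ambiguity about the boundary value that is approached.
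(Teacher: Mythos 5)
Your proposal is correct and follows essentially the same route as the paper: the same case split between the terminal slice $\Lambda\times\{T\}$ (where $\tau-s\le T-s\to 0$ and path continuity do the work) and the lateral boundary $\partial\Lambda\times(0,T]$ (where $X$-regularity does), combined with the same three-term bound $|\gamma_{s,\tau}-1|\,|\psi|+|\psi(X_\tau,\tau)-\psi(x,t)|+|H_{s,\tau}|$ and dominated convergence from the boundedness of $g,h,\psi$. The one spot you flag as delicate --- locating $X_\tau$ at the corner $\partial\Lambda\times\{T\}$ when the time horizon is reached before the spatial boundary, where your ``small-probability event'' phrasing is not quite right but your path-continuity fallback is the correct idea --- is treated with comparable brevity in the paper's own proof.
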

	
	\begin{lemma}
		\label{lem:bdary values of X-harmonic observables}
		Under the above assumptions, if additionally either 
		\begin{itemize}[noitemsep]
			\item [i)] $C = \sup_{y \in \Lambda} g(y) \leq 0$; or
			\item[ii)] the following expectations are finite for all $w \in \Lambda$ and converge: $\mathbb{E}_w [e^{C \tau}] \to 1$ as $w \to x \in \partial \Lambda$,
		\end{itemize} 
		then the $X$-harmonic solution candidate $f$ in~\eqref{eq:X-harmonic FK formula} has the boundary limits $\lim_{w \to x} f(w) = \psi (x) $.
	\end{lemma}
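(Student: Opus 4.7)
The plan is to split
\[
f(w)-\psi(x)=\bigl(\mathbb{E}_w[\gamma_\tau\psi(X_\tau)]-\psi(x)\bigr)+\mathbb{E}_w[H_\tau]
\]
and drive each piece to zero as $w\to x\in\partial\Lambda$, using $X$-regularity to supply the pointwise convergences $\tau\to 0$ and $X_\tau\to x$ in $\mathbb{P}_w$-probability. Continuity of $g,h,\psi$ then gives, also in probability, $\gamma_\tau\to 1$, $\psi(X_\tau)\to\psi(x)$, and $H_\tau\to 0$; the task reduces to upgrading these to convergences in expectation. The workhorse will be the deterministic pathwise bounds
\[
|\gamma_s|\le e^{Cs},\qquad |H_\tau|\le\|h\|_\infty\int_0^\tau e^{Cs}\,ds,
\]
which I would combine with dominated convergence or uniform integrability separately in the two cases.

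In case (i) with $C\le 0$ the bound $e^{Cs}\le 1$ makes $\gamma_\tau\psi(X_\tau)$ dominated by the constant $\|\psi\|_\infty$, so dominated convergence handles the first piece. For $H_\tau$, the identity $\int_0^\tau e^{Cs}\,ds=(1-e^{-|C|\tau})/|C|$ when $C<0$ yields a bounded dominating function tending to $0$ in probability, closing that term as well. Case (ii) is handled via Scheff\'{e}'s lemma: since $e^{C\tau}\ge 1$, $e^{C\tau}\to 1$ in probability, and $\mathbb{E}_w[e^{C\tau}]\to 1$, one obtains $L^1$ convergence $e^{C\tau}\to 1$ and hence uniform integrability of $\{e^{C\tau}\}_w$ as $w\to x$, upgrading the pointwise limit $\gamma_\tau\psi(X_\tau)\to\psi(x)$ to $L^1$. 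For the $H_\tau$ piece, Fubini supplies
\[
\mathbb{E}_w\!\left[\int_0^\tau e^{Cs}\,ds\right]=\int_0^\infty e^{Cs}\,\mathbb{P}_w[\tau>s]\,ds=\frac{\mathbb{E}_w[e^{C\tau}]-1}{C}\xrightarrow{w\to x}0,
\]
which bounds $\mathbb{E}_w[|H_\tau|]$ and closes the argument.

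The hardest corner will be the sub-case $C=0$ with $h\not\equiv 0$ in (i), where the naive dominating bound $|H_\tau|\le\|h\|_\infty\tau$ is not a fixed deterministic function: I would handle it by truncation, writing $\mathbb{E}_w[\tau]=\mathbb{E}_w[\tau\wedge\delta]+\mathbb{E}_w[\tau\,\mathbb{I}_{\tau>\delta}]$, bounding the first summand by $\delta$, and estimating the second via the strong Markov property after exiting a small neighborhood of $x$, so that the product of $\sup_{y\in K}\mathbb{E}_y[\tau]$ over a fixed compact $K\subset\Lambda$ and the vanishing probability $\mathbb{P}_w[\tau>\delta]$ (by $X$-regularity) forces the remainder to vanish. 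Elsewhere the argument is the clean combination of Scheff\'{e}'s lemma, Fubini, and dominated convergence sketched above.
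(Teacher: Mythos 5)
Your argument is, in substance, the paper's own: the same decomposition of $f(w)-\psi(x)$ into the contributions of $\gamma_\tau-1$, $\psi(X_\tau)-\psi(x)$ and $H_\tau$; the same Gr\"onwall-type bounds $\gamma_t\le e^{Ct}$ and $|H_\tau|\le\Vert h\Vert_\infty\int_0^\tau e^{Cs}\,\diff s=\Vert h\Vert_\infty(e^{C\tau}-1)/C$; $X$-regularity to supply the in-probability limits $\tau\to0$, $X_\tau\to x$; and the same Scheff\'e/generalized-dominated-convergence upgrade in case (ii), including the identity $\mathbb{E}_w[\int_0^\tau e^{Cs}\,\diff s]=(\mathbb{E}_w[e^{C\tau}]-1)/C$ (for which, incidentally, no Fubini is needed: it is a pathwise identity). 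The only organizational difference is that the paper first observes that (i) implies (ii) and then argues only under (ii), whereas you treat (i) directly by bounded convergence; both routes are fine. (One small slip: your inequality $e^{C\tau}\ge1$ in case (ii) presumes $C\ge0$; for $C<0$ one falls back on bounded convergence as in case (i).)

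The one place where you go beyond the paper is the corner you yourself flag as hardest, $C=0$ with $h\not\equiv0$, and there your patch does not close. After the truncation, on the event $\{\tau>\delta\}$ the position of the process at time $\delta$ (or at the exit of a small neighbourhood of $x$) is not confined to any fixed compact $K\subset\Lambda$; and even if it were, the hypotheses give neither finiteness nor upper semicontinuity of $y\mapsto\mathbb{E}_y[\tau]$ (well-definedness of $f$ in~\eqref{eq:X-harmonic FK formula} gives integrability of $H_\tau$, not of $\tau$, and certainly not locally uniformly). So the bound $\sup_{y\in K}\mathbb{E}_y[\tau]\cdot\mathbb{P}_w[\tau>\delta]$ is not available, and what is genuinely needed here is some uniform integrability of $\tau$ under $\mathbb{P}_w$ as $w\to x$: convergence $\tau\to0$ in probability alone does not yield $\mathbb{E}_w[\tau]\to0$. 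To be fair, the paper's own bound $(e^{Ct}-1)/C$ also degenerates to $t$ at $C=0$ and the text passes over this point; in the applications the corner is only invoked with $h=0$ (case (b) of Theorem~\ref{thm:X-harmonic FK}), where $H\equiv0$ and the issue disappears. If you either restrict your subcase analysis accordingly or add the hypothesis $\mathbb{E}_w[\tau]\to0$ as $w\to x$, the remainder of your proof is complete.
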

	
	\begin{proof}[Proof of Theorem \ref{thm:Feynman--Kac}]
		The criteria of Lemma~\ref{lem:Feynman--Kac mgale obsesrvable}, Theorem~\ref{thm:mgale observables solve PDEs} and Lemma~\ref{lem:bdary values of FK observables} are easily verified so~\eqref{eq: Feynman--Kac solution formula} is a weak solution to~\eqref{eq:parabolic BVP}. By hypoellipticity (Theorem~\ref{thm:Hormander original} and Remark~\ref{rem:different Lie algebras}), the weak solution is smooth provided that $g$ and $h$ are smooth.
		%
		%
		In the latter case, if there was another bounded solution $\tilde{f}$, then a direct computation shows that $\gamma_{a, t} \tilde{f}(X_t, t) + H_{a, t}$, given any launching time $a$ and point $X_a  = x$, is a continuous local martingale and bounded, and thus also a genuine martingale. By optional stopping, we then have
		\begin{align*}
			\tilde{f}(x, t) = \mathbb{E} [\gamma_{t,\tau} \psi(X_\tau, \tau) + H_{t,\tau} \; |\; X_t = x]
			= f(x, t).
		\end{align*}
		This concludes the proof.
	\end{proof}
	
	\begin{proof}[Proof of Theorem \ref{thm:X-harmonic FK}]
		By a variant of the computations in the proof of Lemma~\ref{lem:bdary values of X-harmonic observables} and using the fact that either $\mathbb{E}_w [e^{C \tau}] $ is finite or $C=h=0$ and $\tau <\infty$, $f$ is well-defined. 
		Now the criteria of Lemma~\ref{lem:Feynman--Kac mgale obsesrvable}, Theorem~\ref{thm:mgale observables solve PDEs} and Lemma~\ref{lem:bdary values of X-harmonic observables} are again easily verified so~\eqref{eq: Feynman--Kac solution formula} is a weak solution to~\eqref{eq:parabolic BVP}. The continuity follows from Remark~\ref{remark:continuity}, and the regularity is  argued by hypoellipticity as above for smooth $g$ and $h$.
		%
		Again, if there was another bounded strong solution $\tilde{f}$, then a direct computation shows that $\tilde{M}_t := \gamma_{ t} \tilde{f}(X_t) + H_{ t}$, given any launching point $X_0  = x$, is a continuous local martingale. In this case we have $ |\tilde{M}_t| \leq \gamma_t \Vert \tilde{f} \Vert_\infty + |H_t| $ and since the right-hand side is bounded over any compact time interval $t \in [0,T]$ (see~\eqref{eq:bound on H} below), we observe that $\tilde{M}_t $ is a genuine martingale. It remains to apply optional stopping to conclude
		\begin{align*}
			\tilde{f}(x) = \mathbb{E}_x [\gamma_{\tau} \psi(X_\tau) + H_\tau ]
			= f(x).
		\end{align*}
		With assumption (b), $\gamma_t \leq 1$ and $H = 0$, so the martingale $\gamma_t \tilde{f}(X_t)$ is bounded and optional stopping applies.
	
			With assumption (c) (which also holds under (a)), one needs to show the general conditions of optional stopping: $\tau < \infty$ a.s. (which was assumed in the statement), $\tilde{M}_\tau$ is integrable, and $\mathbb{E}_x [|\tilde{M}_T| \mathbb{I} \{\tau > T\}] \stackrel{T \to \infty}{\longrightarrow} 0$. For the latter two, note that $ |\tilde{M}_t| \leq \gamma_t \Vert \tilde{f} \Vert_\infty + |H_t| $ implies $|\tilde{M}_\tau| \leq K e^{C \tau}$ (see again~\eqref{eq:bound on H}). Secondly, by Markov's inequality, $\mathbb{P}_x[\tau > T] \leq e^{-CT} \mathbb{E}_x[\tau^\alpha e^{C \tau}]/T^\alpha$ so
		\begin{align*}
			\mathbb{E}_x [|M_T| \mathbb{I} \{\tau > T\}] \leq Ke^{CT}e^{-CT} \mathbb{E}_x[\tau^\alpha e^{C \tau}]/T^\alpha \to 0.
		\end{align*}
		This concludes the proof.
	\end{proof}

	\subsection{Proofs of the technical lemmas}
	\label{subsec:technical stuff}
	
	\begin{proof}[Proof of Lemma~\ref{lem:f_eps is continuous}]
		Fix $(x, t) \in \Lambda \times [0, T-\epsilon]$ and $\varepsilon > 0$; we shall show that for all $(x', t')$ close enough to $(x, t)$, $| f_\epsilon(x, t) - f_\epsilon(x', t') | < \varepsilon$. For several compactness arguments below, we fix throughout the proof an $r > 0$ such that $(t-r, t+2r) \subset [0, T-\epsilon] $ and $B(x, 4r) \subset \Lambda$, and we require $|t-t'|<r$ and $|x-x'|<r$ throughout. First, we have
		\begin{align*}
			| f_\epsilon(x, t) - f_\epsilon(x', t') |
			& \leq | f_\epsilon(x, t) - f_\epsilon(x, t') |
			+ | f_\epsilon(x, t') - f_\epsilon(x', t') | 
			\\
			& < C_1 |t-t'| + | f_\epsilon(x, t') - f_\epsilon(x', t') |,
		\end{align*}
		where we used the local boundedness of the time derivatives $\partial_s f_\epsilon (x, s)$ for fixed $x$ (we could denote $C_1 =C_1 (r, x)$, but we omit fixed quantities from such constants throughout this proof).
		The second term can be handled by using the martingale property. To this end, we will use the slowed-down process $\hat{X}$ with the slowing-down function $\vartheta$ such that $\vartheta(y)=1$ for all $|y-x| < 4r$ and introduce an additional time increment parameter $s \in (0, r)$. Let now $w \in \{x, x'\}$; launching $\hat{X}$ from $\hat{X}_{0}=w$ and studying the martingale with the time offset parameter $t_0 = t'$, we have
		\begin{align*}
			f_\epsilon(w, t') &= \hat{M}^{t'}_0 = \mathbb{E}_{w} [\hat{\gamma}_s f_\epsilon(\hat{X}_{s}, \beta(s)+t') + \hat{H}^{t'}_s].
		\end{align*}
		The strategy will be to approximate the above expectation for small $s$ with the path-independent quantity $\mathbb{E}_{w}  [ f_\epsilon(\hat{X}_{s}, t' + s) ]$ and analyze this through the particle density $\hat{\rho}_w(x, s)$. To this end, let $E(s, 4r)$ denote the event that $\hat{X}_s$ remains in $B(x, 4r)$ up to the time $s$ (note that then $\beta(s)= s$). It follows from the local boundedness of $f$, $g$ and $h$ that on the event $E(s, 4r)$
		\begin{align*}
			| \hat{\gamma}_s f_\epsilon(\hat{X}_{s}, t' + \beta(s)) + \hat{H}_s - 
			f_\epsilon(\hat{X}_{s}, t' + s) | < C_2 s,
		\end{align*}
		where $C_2 $ only depends on the input functions and $r$.
		Using again the local boundedness of $\hat{\gamma}$, $f$ and $h_\epsilon$
		\begin{align*}
			\big|  \mathbb{E}_{w} [\hat{\gamma}_s f_\epsilon(\hat{X}_{s}, \beta(s)+t') + \hat{H}^{t'}_s] -  \mathbb{E}_{w} [f_\epsilon(\hat{X}_{s}, t'+s) ] \big| \leq C_2 s + C_3 \mathbb{P}_{w} [E(s, 4r)^c].
		\end{align*}
		Clearly, choosing $w=x$, we have $\mathbb{P}_{x} [E (s,4r)] \to 1$ as $s\to 0$. Moreover, by the continuity of solutions to Lipschitz SDEs in the launching point, the processes launched from $x$ and $w \in B(x, r)$ can be coupled to stay at a distance $2r$ from each other up to a small enough time $s$, with a large probability that only depends on $s$ and the Lipschitz constants. Thus, for all $w \in B(x, r)$,
		\begin{align*}
			\mathbb{P}_{w} [E(s, 4r)] \geq 1- o_s (1),
		\end{align*}
		where the Landau notation means ``$o(1)$ as $s \to 0$'' and the error term is bounded uniformly over $w \in B(x, r)$. 
		We conclude that for all $|w-x| < r$ and $s < r$
		\begin{align*}
			\big|f_\epsilon (w, t') - \mathbb{E}_{w} [f_\epsilon(\hat{X}_s, t' + s) ] \big| \leq C_2 s + o_s (1) .
		\end{align*}
		
		It now remains to estimate $\big| \mathbb{E}_{x} [f_\epsilon(\hat{X}_{s}, t'+s)] - \mathbb{E}_{x'} [f_\epsilon(\hat{X}_{s}, t'+s)] \big|$. Denote
		\begin{align*}
			\mathbb{E}_{w} [f_\epsilon(\hat{X}_{s}, t'+s)]
			=: e(w, t')= \int_{y \in \Theta} f_\epsilon(y, t'+s) \hat{\rho}_w(y,s) \diff^n y. 
		\end{align*}
		By compactness of $\overline{\Theta}$, the local boundedness of $f_\epsilon$, and the smoothness of $\hat{\rho}$, the expression above is differentiable in $w$. Moreover, for fixed $x, t, r$, with $|w-x| < r$ and $|t-t'|<r$, and for a fixed\footnote{Note that if we let $s \to 0$, $\hat{\rho}_w(y,s)$ will tend to a delta distribution and we will lose the desired control of its gradient.} $s < r$, simple local boundedness arguments give
		\begin{align*}
			| \nabla_w e(w, t') | \leq C_3 ( s).
		\end{align*}
		Consequently,
		\begin{align*}
			\big| \mathbb{E}_{x} [f_\epsilon(\hat{X}_{s}, t'+s)] - \mathbb{E}_{x'} [f_\epsilon(\hat{X}_{s}, t'+s)] \big| \leq 
			C_3 (s) |x-x'|,
		\end{align*}
		and combining all the estimates so far gives
		\begin{align*}
			| f_\epsilon(x, t) - f_\epsilon(x', t') | &< C_1 |t-t'| + 2C_2 s + o_s (1) + C_3 (s) |x-x'|.
		\end{align*}
		Fixing now first the additional parameter $s$ small enough so that $2C_2 s + o_s (1) < \varepsilon/2$, we conclude the proof: for all $|t-t'|$ and $|x-x'|$ small enough so that $C_1 |t-t'| + C_3 (s) |x-x'| < \varepsilon/2$, we have $| f_\epsilon(x, t) - f_\epsilon(x', t') | < \varepsilon$.

	\end{proof}

	\begin{proof}[Proof of Lemma~\ref{lem:remove time-change}]
		Let $A$ denote 
		preimage $\vartheta^{-1}(1)$, and thus $\beta(s)=s$ for all $s\leq \tau_{A^c} = \inf\{s\geq 0: \hat{X}_s \in A^c\}$.
		We thus decompose
		\begin{align*}
			\mathbb{E}[ f_\epsilon(\hat{X}_{s},t+\beta(s))]
			&=  \mathbb{E}[f_\epsilon(\hat{X}_{s},t+\beta(s))I_{\tau_{A^c}\leq s}] +  \mathbb{E}[ f_\epsilon(\hat X_{s},t+s)I_{\tau_{A^c}> s}].
		\end{align*}
		Similarly, we have
		\begin{align*}
			\mathbb{E}[ f_\epsilon(\hat X_{s},t+s)] 
			&=  \mathbb{E}[ f_\epsilon(\hat X_{s},t+s)I_{\tau_{A^c}\leq s}] +  \mathbb{E}[ f_\epsilon(\hat X_{s},t+s)I_{\tau_{A^c}> s}].
		\end{align*}
		Since $f_\epsilon$ is locally bounded, it hence suffices to prove that
		$$
		\mathbb{P}[\tau_{A^c}\leq s ] = o(s).
		$$
		For this, note first that since the functions $\hat{\sigma}_{i,j}(x)$ and $\hat{b}_i(x)$ are smooth and bounded, directly from the It\^{o} isometry and the Burkholder--Davis--Gundy inequality one obtains\footnote{Let us exemplify the argument for one-dimensional diffusions. Then if $\Vert \cdot\Vert_p$ denotes the $L^p(\mathbb{P})$ norm, using Minkowski's inequality gives
			$$
			\Vert X_s -X_0 \Vert_p \leq \int_0^s \Vert b(X_u)\Vert_p du + \Vert \int_0^s \sigma(X_u)dB_u\Vert_p,
			$$
			where, thanks also to Burkholder--Davis--Gundy inequality,
			$$
			\Vert\int_0^s \sigma(X_u)dB_u\Vert_p \leq C_p \Vert \int_0^s \sigma^2(X_u)du \Vert_{p/2}^{1/2} \leq C_{p,\sigma} s^{1/2},
			$$
			leading eventually to the claimed inequality.}
		$$
		\mathbb{E}\big[|\hat{X}_{s}-\hat{X}_0|^p\big] \leq C_p s^{\frac{p}{2}}
		$$
		for any $p\geq 1$, where  $C_p$ is a constant depending on $p$ and the functions $\hat{\sigma}_{i,j}$ and $\hat{b}_i$. As such, the Kolmogorov--Chentshov theorem implies that 
		we have the H\"{o}lder continuity
		$$
		|\hat{X}_{s}-\hat{X}_0| \leq C_{[0,s]}s^{\gamma}
		$$
		for all $\gamma<\frac12$, and for some random  constant $C_{[0,s]}$ (depending on $\gamma$). Moreover, the arguments, e.g., in \cite{Nummi-Viitasaari24} (see also \cite[Proof of Theorem 1]{Yazigi14}), gives, for a constant $C=C(\gamma,p)$, that
		$$
		\mathbb{E} \big[C_{[0,s]}^p\big] \leq Cs^{\frac{p}{2}}.
		$$
		Now since the slowing-down function $\vartheta$ was originally constructed so that $supp(\varphi)$ is contained in $\delta$-interior of $A$, i.e. we launch the process at time $t$ from the $\delta$-interior of $A$, this means that on the event $\tau_{A^c}\leq s$ we have 
		$$
		\delta < |\hat{X}_{\tau_{A^c}}-\hat{X}_0| \leq C_{[0,s]}|s|^{1/2-\epsilon} \leq C_{[0,s]}.
		$$
		Thus
		$$
		\mathbb{P}[\tau_{A^c}\leq s] \leq \mathbb{P}[C_{[0,s]} > \delta] \leq \tfrac{\mathbb{E} [ C_{[0,s]}^p ]}{\delta^p} \leq C\delta^{-p}s^{\frac{p}{2}}.
		$$
		Choosing $p>2$ completes the proof.
	\end{proof}
	
	\begin{remark}
		Note that the above proof shows actually that, for any $p\geq 1$, we have
		\begin{align*}
			\mathbb{E}[ f_\epsilon(\hat{X}_{s},\beta( s)+t_0)] =  \mathbb{E}[ f_\epsilon(\hat{X}_{s}, s+t_0)] + o(s^p) \qquad \text{as } s \to 0.
		\end{align*}
	\end{remark}
	
	\begin{proof}[Proof of Lemma~\ref{lem:bdary values of FK observables}]
		Suppose first that the limit point satisfies $(x, t) = (x, T) \in \Lambda \times \{ T \}$. Using
		\begin{align*}
			|f(w, s) - \psi (x, t)| \leq \mathbb{E}_{w, s} [|\gamma_{s,\tau} - 1|| \psi(X_\tau, \tau)| +  | \psi(X_\tau, \tau)  -  \psi(x, T)| + |H_{s, \tau} |],
		\end{align*}
		and the uniform boundedness of $\psi, h$, and $g$,
		this case boils down to showing 
		\begin{align}
			\label{eq:needed}
			\mathbb{E}_{w, s}[| \psi(X_\tau, \tau)  - \psi(x, T)|] \to 0 \qquad \text{as } (w, s) \to (x, T).
		\end{align}
		Due to the time-homogeneity of the process $X$, instead of taking the launching time $s$ towards the termination time $T$, we can think of starting the process at time $0$ and stopping it at latest at a very small time $(T-s)\to 0$. Basic SDE theory then allows one to conclude that $(X_\tau, \tau) \to (x, T) $ weakly, which proves \eqref{eq:needed}.
		
		Suppose then that the limit point satisfies $(x, t) \in \partial \Lambda \times [0, T]$ and fix $\epsilon > 0$.
		For starters, compute:
		\begin{align}
			\nonumber 
			|f(w, s) - \psi(x, t)| 
			&\leq
			|f(w, s) - \psi(x, s)| + |\psi(x, s) - \psi(x, t)|
			\\
			\nonumber
			& \leq \mathbb{E}_{w, s} [|\gamma_{s,\tau} - 1|\cdot | \psi(X_\tau, \tau)| +  | \psi(X_\tau, \tau)  -  \psi(x, s)| + |H_{s, \tau} |] \\
			& \qquad +  |\psi(x, s) - \psi(x, t)|.
			\label{eq:another three term sum}
		\end{align}
		For compactness arguments, let us fix an $r>0$, and assume below $|s-t|<r$ (apart from that, independence of $s$ is crucial). Set $K = \overline{B(x, r)} \cap \partial \Lambda$; by compactness, $\psi$ is uniformly continuous on $K \times [t-2r, t+ 2r]$. Thus, there is $\delta > 0$ (depending only on $\epsilon, \psi$ and $r$) such that if $\tau < s+ \delta$ and $  |X_{\tau} - x| < \delta$ then
		\begin{align}
			\label{eq:ineq 1}
			|\psi(X_\tau, \tau) - \psi(x,s)| < \epsilon/5.
		\end{align}
		Next, by the uniform boundedness of $\psi, h$ and $g$, taking $\delta $ smaller if necessary (depending on $\epsilon, \psi, g, h$), if $\tau < s+ \delta$ and $  |X_{\tau} - x| < \delta$, then we also have
		\begin{align}
			\label{eq:ineq 2}
			|\gamma_{s,\tau} - 1|\cdot | \psi(X_\tau, \tau)| < \epsilon / 5
			\qquad \text{and} \qquad 
			|H_{s, \tau} | < \epsilon / 5.
		\end{align}
		Define thus the event $E(\delta) =  \{ \tau < s+ \delta$ and $ | X_{\tau} - x| < \delta \}$, on which~\eqref{eq:ineq 1}--\eqref{eq:ineq 2} hold. On the event $E(\delta)^c$, we just use the fact that the random variable in the expectation of~\eqref{eq:another three term sum} is bounded by some $C$ (depending only on the input functions $g, h, \psi$), to conclude
		\begin{align*}
			\eqref{eq:another three term sum}
			\leq
			3 \epsilon / 5 + C \mathbb{P}_{w, s} [ E(\delta)^c ]  +  |\psi(x, s) - \psi(x, t)|.
		\end{align*}
		Finally, by the definition of a regular boundary, we see that there is $\delta_2 > 0$ (independent of $s$ due to the time-homogeneity of $X$) such that if $|w - x| < \delta_2$, then $ C \mathbb{P}_{w, s} [ E(\delta)^c ] < \epsilon/5$. By the continuity of $\psi$, taking $\delta_2$ smaller if necessary, we also have $|\psi(x, s) - \psi(x, t)| < \epsilon / 5$ if $|t-s| < \delta_2$. As a conclusion, we have found $\delta_2$ (depending only on $\epsilon$, $r$ and the input functions $\psi, g, h$) such that if $|w - x| < \delta_2$ and $|t-s| < \delta_2$ then $|f(w, s) - \psi(x, t)| < \epsilon$.

	\end{proof}
	
	
	\begin{proof}[Proof of Lemma~\ref{lem:bdary values of X-harmonic observables}]
		It is clear that condition (i) implies condition (ii), and hence it suffices to prove the statement by assuming condition (ii).  For this, 
		note first that $\diff \gamma_t = \gamma_t g(X_t) \diff t$ and $\gamma_0 = 1$, so $\gamma_t \leq e^{C t}$ by Grönwall's lemma. Hence $\gamma_\tau - 1 
		\leq e^{C\tau} -1.$
		It follows from the trivial bound $\gamma_\tau - 1 > -1$ and dominated convergence theorem that $(\gamma_\tau - 1)$ is integrable under $\mathbb{E}_w$ and converges to $0$ in $L^1$ as $w \to x$.
		Similarly, using the fact that $\diff H_t = h(X_t) \gamma_t \diff t$, one obtains 
		\begin{align}
			\label{eq:bound on H}
			|H_t| \leq \Vert h \Vert_\infty \int_{s=0}^t e^{Cs} \diff s \leq \Vert h \Vert_\infty \tfrac{e^{Ct} - 1}{C}
		\end{align}
		that is integrable by assumption, and consequently $\mathbb{E}_w[|H_\tau|] \to 0$ as $w \to x$. To conclude the proof, we use again
		\begin{align*}
			|f(w) - \psi (x)| \leq \mathbb{E}_w [|\gamma_\tau - 1|\cdot | \psi(X_\tau)| +  | \psi(X_\tau)  -  \psi(x)| + |H_\tau |].
		\end{align*}
		Here the first and the third term vanish in the limit $w\to x$ by above considerations, and the middle term can be handled as in the time-dependent case. This concludes the whole proof.
	\end{proof}


\begin{thebibliography}{50}
		
		\bibitem[AHSY25]{AHPY}
		M.~Ang, N.~Holden, X.~Sun, and P.~Yu.
		\newblock Conformal welding of quantum disks and multiple SLE: the non-simple case.
		\newblock {\em Probab. Th. Rel. Fields} [Online-first], 2025.
		
		\bibitem[ASVY14]{Yazigi14}
		E.~Azmoodeh, T.~Sottinen, L.~Viitasaari, and A.~Yazigi.
		\newblock Necessary and sufficient conditions for H\"older continuity of Gaussian processes.
		\newblock {\em Statistics and Probability Letters}, 94:230-235, 2014.
		
		\bibitem[Bon69]{Bony}
		J.-M. Bony. 
		\newblock Principe du maximum, in\'{e}galit\'{e} de Harnack et unicit\'{e} du probl\`{e}me de Cauchy pour les op\'{e}rateurs elliptiques d\'{e}g\'{e}n\'{e}r\'{e}s.
		\newblock \emph{Ann. Inst. Fourier} 19(1):277--304, 1969.
		
		\bibitem[BS02]{Borodin-Salminen}
		A.~N.~Borodin and P.~Salminen.
		\newblock \emph{Handbook of Brownian Motion --- Facts and Formulae} (2nd ed.). Springer, 2002.
		
		\bibitem[CDMdM25]{diffusion-MC}
		M.~Caffarel, P.~Del Moral, and L.~de Montella.
		\newblock On the mathematical foundations of diffusion Monte Carlo. 
		\newblock {\em J. Math. Phys.}, 66(1):013301, 2025.
		
		
		
		\bibitem[CDCHKS14]{CDHKS-convergence_of_Ising_interfaces_to_SLE}
		D.~Chelkak, H.~Duminil-Copin, C.~Hongler, A.~Kemppainen, and S.~Smirnov.
		\newblock Convergence of {I}sing interfaces to {S}chramm's {SLE} curves.
		\newblock {\em C. R. Acad. Sci. Paris Ser. I}, 352(2):157--161, 2014.
		
		\bibitem[DKP25+]{ongoing}
		H.~Desiraju, A.~Korzhenkova, and E.~Peltola.
		\newblock In preparation.
		
		\bibitem[Dor21]{Doren}
		B.~van Doren.
		\newblock A Feynman-Kac formula for a degenerate Kolmogorov backward equation.
		\newblock BSc thesis, Eindhoven U. Tech., 2021. Available: \url{https://pure.tue.nl/ws/portalfiles/portal/187432083/Doren.pdf}.
		
		\bibitem[Dub15]{Dubedat}
		Julien Dub\'{e}dat.
		\newblock SLE and Virasoro representations: localization. 
		\newblock {\em Commun. Math. Phys.}, 336(2):695--760, 2015.
		
		\bibitem[EM13]{Pop1}
		C.~L.~Epstein and R.~Mazzeo.
		\newblock {\em Degenerate Diffusion Operators Arising in Population Biology}.
		\newblock Princeton University Press, 2013.
		
		\bibitem[EP17]{Pop2}
		C.~L.~Epstein and C.~A.~Pop.
		\newblock The Feynman--Kac formula and Harnack inequality for degenerate diffusions.
		\newblock {\em Ann. Probab.}, 45(5):3336--3384, 2017.
		
		
		\bibitem[FK10]{Karatzas}
		D.~Fernholz and I.~Karatzas.
		\newblock On optimal arbitrage.
		\newblock {\em Ann. Appl. Probab.}, 20(4):1179--1204, 2010.
		
		\bibitem[FLPW24+]{FLPW}
		Y. Feng, M. Liu, E. Peltola, and H. Wu.
		\newblock Multiple SLEs for $\kappa \in (0, 8)$:
		Coulomb gas integrals and pure partition functions.
		\newblock Preprint, \url{https://arxiv.org/abs/2406.06522}, 2024.
		
		\bibitem[FPW24]{FPW}
		Y. Feng, E. Peltola, and H. Wu.
		\newblock Connection probabilities of multiple FK-Ising interfaces.
		\newblock {\em Probab. Th. Rel. Fields}, 189:281-367, 2024.
		
		
		\bibitem[FW25+]{FW}
		Y.~Feng and H.~Wu.
		\newblock Radial BPZ equations and partition functions of FK-Ising interfaces conditional on one-arm event.
		\newblock Preprint, \url{https://arxiv.org/pdf/2411.16051}, 2024.
		
		\bibitem[FK15a]{FK-solution_space_for_a_system_of_null_state_PDEs_1}
		S.~M.~Flores and P.~Kleban.
		\newblock A solution space for a system of null-state partial differential
		equations, Part I.
		\newblock {\em Comm. Math. Phys.}, 333(1):389--434, 2015.
		
		
		\bibitem[FK15b]{FK-solution_space_for_a_system_of_null_state_PDEs_2}
		---.
		\newblock A solution space for a system of null-state partial differential
		equations, Part II.
		\newblock {\em Comm. Math. Phys.}, 333(1):435--481, 2015.
		
		
		\bibitem[FK15c]{FK-solution_space_for_a_system_of_null_state_PDEs_3}
		---.
		\newblock A solution space for a system of null-state partial differential
		equations, Part III.
		\newblock {\em Comm. Math. Phys.}, 333(2):597--667, 2015.
		
		
		\bibitem[FK15d]{FK-solution_space_for_a_system_of_null_state_PDEs_4}
		---.
		\newblock A solution space for a system of null-state partial differential
		equations, Part IV.
		\newblock {\em Comm. Math. Phys.}, 333(2):669--715, 2015.
		
		
		
		\bibitem[Hes93]{Heston}
		S.~L.~Heston.
		\newblock A Closed-Form Solution for Options with Stochastic Volatility with Applications to Bond and Currency Options.
		\newblock {\em Rev. Fin. Stud.}, 6(2):327--343, 1993.
		
		\bibitem[HK13]{HK-Ising_interfaces_and_free_boundary_conditions}
		C.~Hongler, and K.~Kyt\"{o}l\"{a}.
		\newblock Ising interfaces and free boundary conditions.
		\newblock {\em J. Amer. Math. Soc.}, 26(4):1107--1189, 2013.
		
		\bibitem[H\"{o}r67]{Hormander-original_hypoellipticity_paper}
		L.~H\"{o}rmander.
		\newblock Hypoelliptic second-order differential equations.
		\newblock {\em Acta Math.}, 119:147--171, 1967.
		
		\bibitem[HPW25+]{HPW}
		C.~Huang, E.~Peltola, and H.~Wu.
		\newblock Multiradial SLE with spiral: resampling property and boundary perturbation.
		\newblock Preprint, \url{https://arxiv.org/pdf/2509.22045}, 2025.
		
		\bibitem[Izy17]{Izyurov-critical_Ising_interfaces_in_multiply_connected_domains}
		K.~Izyurov.
		\newblock Critical {I}sing interfaces in multiply-connected domains.
		\newblock \emph{Probab. Th. Rel. Fields}, 167(1--2):379--415, 2017.
		
		\bibitem[Jac79]{Jacod}
		J.~Jacod.
		\newblock \emph{Calcul {S}tochastique et {P}robl\`{e}mes de {M}artingales.}
		\newblock Springer, 1979.
		
		\bibitem[JJK16]{JJK-SLE_boundary_visits}
		N.~Jokela, M.~J{\"a}rvinen, and K.~Kyt{\"o}l{\"a}.
		\newblock SLE boundary visits.
		\newblock {\em Annales Henri Poincar\'e}, 17(6):1263--1330, 2016.
		
		\bibitem[KS98]{Karatzas-Shreve98}
		I.~Karatzas, S.E.~Shreve.
		\newblock \emph{Methods of Mathematical Finance}.
		\newblock Springer, 1998.
		
		\bibitem[Kar20]{mie3}
		A.~Karrila.
		UST branches, martingales, and multiple SLE(2).
		\newblock \emph{Electr. J. Probab.} 25, 2020.
		
		\bibitem[Kar25+]{mie2}
		---.
		\newblock A new computation of pairing probabilities in several multiple-curve models.
		\newblock Preprint, \url{https://arxiv.org/abs/2208.06008}, 2022.
		
		\bibitem[KKP20]{KKP}
		A.~Karrila, K.~Kyt\"ol\"a, and E.~Peltola.
		\newblock Boundary correlations in planar LERW and UST.
		\newblock \emph{Comm. Math. Phys.}, 376(3):2065--2145, 2020.
		
		\bibitem[KLPR25+]{KLPR}
		A.~Karrila, A.~Lafay, E.~Peltola, and J.~Roussillon.
		\newblock Planar UST branches and $c=-2$ degenerate boundary correlations.
		\newblock \emph{Probab. Math. Phys.}, to appear, 2025.
		
		
		\bibitem[Kob10]{Kobayashi}
		K.~Kobayashi.
		\newblock Stochastic Calculus for a Time-Changed Semimartingale and the Associated Stochastic Differential Equations.
		\newblock \emph{J. Theor. Probab.}, 24:789--820, 2011.
		
		\bibitem[Law05]{Lawler-SLE_book}
		G.~F.~Lawler.
		\newblock {\em Conformally invariant processes in the plane.}
		\newblock AMS, 2005.
		
		\bibitem[LSW04]{LSW-LERW_and_UST}
		G.~F.~Lawler, O.~Schramm, and W.~Werner.
		\newblock Conformal invariance of planar loop-erased random walks and uniform
		spanning trees.
		\newblock {\em Ann. Probab.}, 32(1B):939--995, 2004.
		
		\bibitem[NV24]{Nummi-Viitasaari24}
		P.~Nummi and L.~Viitasaari.
		\newblock Necessary and sufficient conditions for continuity of hypercontractive processes and fields.
		\newblock {\em Stat. Probab. Letters}, 208:110049, 2024.
		
		
		
		\bibitem[Nua06]{Nualart}
		D.~Nualart.
		\newblock {\em The Malliavin calculus and related topics}.
		\newblock Springer, 2006.
		
		\bibitem[{\O}ks03]{Oksendal-SDE}
		B.~{\O}ksendal.
		\newblock \emph{Stochastic Differential Equations} (6th ed.).
		\newblock Springer, 2003. 
		
		\bibitem[{\O}ks90]{Oksendal-90}
		B.~{\O}ksendal.
		\newblock {\em When is a stochastic integral a time change of a diffusion?}
		\newblock \emph{J. Theor. Probab.}, 3:207--226, 1990.
		
		\bibitem[PW19]{PW}
		E.~Peltola and H.~Wu.
		\newblock Global and Local Multiple {SLE}s for $\kappa \le 4$ and Connection Probabilities for Level Lines of GFF.
		\newblock \emph{Comm. Math. Phys.}, 366(2):469--536, 2019.
		
		\bibitem[PW23]{PW18}
		E.~Peltola and H.~Wu.
		\newblock {C}rossing {P}robabilities of {M}ultiple {I}sing {I}nterfaces.
		\newblock \emph{Ann. Appl. Probab.}, 33(4):3169--3206, 2023.
		
		\bibitem[RS05]{RS-basic_properties_of_SLE}
		S.~Rohde and O.~Schramm.
		\newblock Basic properties of SLE.
		\newblock {\em Ann. Math.}, 161(2):883--924, 2005.
		
		\bibitem[Rud73]{Rudin-FA}
		W.~Rudin.
		\newblock {\em Functional analysis}.
		\newblock McGraw Hill, 1973.
		
		\bibitem[Sch00]{Schramm-LERW_and_UST}
		O.~Schramm.
		\newblock Scaling limits of loop-erased random walks and uniform spanning
		trees.
		\newblock {\em Israel J. Math.}, 118(1):221--288, 2000.
		
		
		
		\bibitem[Smi01]{Smirnov-critical_percolation}
		S.~Smirnov.
		\newblock Critical percolation in the plane: conformal invariance, {C}ardy's
		formula, scaling limits.
		\newblock {\em C. R. Acad. Sci. Paris}, 333(3):239--244, 2001.
		\newblock See also \url{http://arxiv.org/abs/0909.4499}.
		
		\bibitem[Str08]{Stroock-PDE}
		D.~W.~Stroock.
		\newblock {\em Partial differential equations for probabilists}.
		\newblock Cambridge University Press, 2008.
		
		\bibitem[SV72]{Stroock-Var}
		D.~Stroock and S.~R.~S.~Varadhan.
		\newblock On degenerate elliptic-parabolic operators of second order and their associated diffusions.
		\newblock \emph{Comm. Pure Appl. Math.}, 25(6):651--713, 1972.
		
		\bibitem[Tych35]{Tychonoff}
		A.~Tychonoff.
		\newblock Th\'eor\`emes d'unicit\'e pour l'\'equation de la chaleur.
		\newblock \emph{Sb. Math.}, 42(2):199--216, 1935.
		
		\bibitem[Vil09]{Villaini}
		C.~Villaini.
		\newblock Hypocoercivity.
		\newblock {\em Mem. Amer. Math. Soc.}, 202, 2009.
		
		\bibitem[Zhan24]{Zhan}
		D.~Zhan.
		\newblock Existence and Uniqueness of Nonsimple Multiple SLE.
		\newblock {\em J. Stat. Phys.}, 191(101), 2024.
		
		
		\bibitem[WZ22]{WZ22}
		X.~Wang and J.~Zhang.
		\newblock Optimal Regularity of a Degenerate Elliptic Equation.
		\newblock {\em J. Nonlinear Math. Phys.}, 30:254--268, 2023.
		
		
		\bibitem[Wu20]{Wu-hypergeometric}
		H. Wu.
		\newblock Hypergeometric SLE: Conformal Markov Characterization and Applications. 
		\newblock \emph{Commun. Math. Phys.}, 374:433--484, 2020.
		
		\bibitem[YM23]{MC-book}
		W.~Yu and M.~Mascagni.
		\newblock {\em Monte Carlo Methods for Partial Differential Equations With Applications
			to Electronic Design Automation}.
		\newblock Springer, 2024.
		
		\bibitem[Zha08]{Zhan-scaling_limits_of_planar_LERW}
		D.~Zhan.
		\newblock The scaling limits of planar {LERW} in finitely connected domains.
		\newblock {\em Ann. Probab.}, 36(2):467--529, 2008.
		
		
		
		
	\end{thebibliography}
\end{document}